\newcommand{\Z}{\mathbb{Z}}
\newcommand{\R}{\mathbb{R}}
\newcommand{\C}{\mathbb{C}}
\newcommand{\N}{\mathbb{N}}
\newcommand{\F}{\mathbb{F}}
\newcommand{\Fp}{\mathbb{F}^{\times}}
\newcommand{\Cp}{\mathbb{C}^{\times}}
\newcommand{\D}{\mathbb{D}}
\newcommand{\8}{\infty}
\newcommand{\spa}{\mathrm{span}}
\newcommand{\Ker}{\mathrm{Ker~}}
\newcommand{\Co}{\mathcal{C}}
\newcommand{\Fo}{\mathcal{F}}
\newcommand{\Lo}{\mathcal{L}}
\newcommand{\Ho}{\mathcal{H}}
\newcommand{\Int}{\mathrm{int}}
\newcounter{erz}[section] \numberwithin{erz}{section}
\newtheorem{theorem}[erz]{Theorem}
\newtheorem{lemma}[erz]{Lemma}
\newtheorem{proposition}[erz]{Proposition}
\newtheorem{corollary}[erz]{Corollary}
\theoremstyle{remark}
\newtheorem{remark}[erz]{Remark}
\newtheorem{example}[erz]{Example}
\begin{document}

\title{Continuity and Holomorphicity of Symbols of Weighted Composition Operators}

\author{Eugene Bilokopytov\footnote{Email address bilokopi@myumanitoba.ca.}}

\maketitle

\begin{abstract}
The main problem considered in this article is the following: if $\mathbf{F}$, $\mathbf{E}$ are normed spaces of continuous functions over topological spaces $X$ and $Y$ respectively, and $\omega:Y\to\C$ and $\Phi:Y\to X$ are such that the weighted composition operator $W_{\Phi,\omega}$ is continuous from $\mathbf{F}$ into $\mathbf{E}$, when can we guarantee that both $\Phi$ and $\omega$ are continuous? An analogous problem is also considered in the context of spaces of holomorphic functions over complex manifolds. Additionally, we consider the most basic properties of the weighted composition operators, which only have been proven before for more concrete function spaces.

\emph{Keywords:} Function Spaces, Weighted Composition Operators, Topological Vector Spaces;
MSC2010 46E10, 46E15, 46E22, 47B33
\end{abstract}

\section{Introduction}

Normed spaces of functions are ubiquitous in mathematics, especially in various branches of analysis. These spaces can be of a various nature and exhibit different types of behavior, and in this work we discuss some questions related to these spaces from a general, axiomatic viewpoint. The class of linear operators that capture the very nature of of the spaces of functions is the class of weighted composition operators (WCO). Indeed, the operations of multiplication and composition can be performed on any collection of functions, while there is a lot of Banach-Stone-type theorems which show that the WCO's are the only operators that preserve various kinds of structure (see e.g. \cite{fj} and \cite{gj} for more details).

In this article we define the general framework which allows to consider any Banach space that consists of continuous (scalar-valued) functions, such that the point evaluations are continuous linear functionals, and we study WCO's on these spaces. We also consider similar questions in the context of spaces of holomorphic functions over complex manifolds. We focus on the most basic properties of weighted composition operators that do not depend on the particular description of the space of functions or particular data of the operator, but rather on more general notions such as injectivity, surjectivity, continuity, boundedness, etc of both the operator and the inducing maps. In particular, we ascertain when the category of the data of the operator ``matches'' the category of the function space (see the questions below).\medskip

First, let us define precisely what we mean by a normed space of continuous functions. Let $\F$ be the field of either real or complex numbers, i.e. ``scalars''. Let $X$ be a locally compact topological space (a \emph{phase space}) and let $\Fo\left(X\right)$ ($\Co\left(X\right)$) denote the space of all (continuous) $\F$-valued functions over $X$ with the topology of pointwise convergence (compact-open topology). A \emph{normed space of continuous functions} (NSCF) over $X$ is a linear subspace $\mathbf{F}\subset\Co\left(X\right)$ equipped with a norm that induces a topology, which is stronger than the compact-open topology, i.e. the inclusion operator $J_{\mathbf{F}}:\mathbf{F}\to\Co\left(X\right)$ is continuous. If $\mathbf{F}$ is a linear subspace of $\Fo\left(X\right)$, then the \emph{point evaluation} at $x\in X$ on $\mathbf{F}$ is the linear functional $x_{\mathbf{F}}:\mathbf{F}\to\F$, defined by $x_{\mathbf{F}}\left(f\right)=f\left(x\right)$. If $\mathbf{F}$ is a NSCF, then all point evaluations are bounded on $\mathbf{F}$. Conversely, if $\mathbf{F}\subset\Co\left(X\right)$ is equipped with a complete norm such that $x_{\mathbf{F}}\in \mathbf{F}^{*}$, for every $x\in X$, then $\mathbf{F}$ is a NSCF.

Let $X$ and $Y$ be locally compact topological spaces, and let $\Phi:Y\to X$ and $\omega:Y\to\F $ (not necessarily continuous). A \emph{weighted composition operator} (WCO) with \emph{composition symbol} $\Phi$ and \emph{multiplicative symbol} $\omega$ is a linear map $W_{\Phi,\omega}$ from $\Fo\left(X\right)$ into $\Fo\left(Y\right)$ defined by $$\left[W_{\Phi,\omega}f\right]\left(y\right)=\omega\left(y\right)f\left(\Phi\left(y\right)\right).$$ Let $\mathbf{F}\subset\Co\left(X\right)$, $\mathbf{E}\subset\Co\left(Y\right)$ be linear subspaces. If $W_{\Phi,\omega}\mathbf{F}\subset\mathbf{E} $, then we say that $W_{\Phi,\omega}$ is a weighted composition operator from $\mathbf{F}$ into $\mathbf{E}$ (we use the same notation $W_{\Phi,\omega}$ for what is in fact $W_{\Phi,\omega}\left|_{\mathbf{F}}\right.$). In the case when both $\Phi$ and $\omega$ are continuous, we will say that $W_{\Phi,\omega}$ is a \emph{continuously induced weighted composition operator} (WCO$_{\Co}$).

WCO$_{\Co}$'s may be viewed as morphisms in the category of NSCF's. In the light of this fact it is important to be able to characterize this class of operators among all continuous linear operators between NSCF's. The first step in this \emph{recognition} problem is to characterize WCO's, which can be done by the way their adjoints act on the point evaluations. Indeed, let $\mathbf{F}$ and $\mathbf{E}$ be NSCF's over $X,Y$ respectively, and let $T\in\Lo\left(\mathbf{F},\mathbf{E}\right)$, i.e. a continuous linear operator from $\mathbf{F}$ into $\mathbf{E}$. Then $T=W_{\Phi,\omega}$ if and only if $T^{*}y_{\mathbf{E}}=\omega\left(y\right)\Phi\left(y\right)_{\mathbf{F}}$, for every $y\in Y$ (see Proposition \ref{rec} and also Corollary \ref{cor}). Thus, the main question is reduced to the following: if $\omega:Y\to\F$ and $\Phi:Y\to X$ are such that $W_{\Phi,\omega}\in\Lo\left(\mathbf{F},\mathbf{E}\right) $, when can we guarantee that both $\Phi$ and $\omega$ are continuous?

To our best knowledge this problem was only considered in \cite{ss} and subsequently in \cite{ms} for the \emph{composition operators} and \emph{weighted} spaces of continuous functions.

Note that we immediately run into some obstructions. If $\omega\left(x\right)=0$, for some $x\in X$, then $W_{\Phi,\omega}$ does not depend on $\Phi\left(x\right)$, and so we cannot reconstruct and examine the continuity of $\Phi$ from $W_{\Phi,\omega}$. A similar problem arises, when there are two distinct points in $X$, such that the point evaluations on $\mathbf{F}$ at these points are linearly dependent. Hence, we will mostly stay away from these two situations, especially the latter.

The conditions that we are looking for to guarantee that a WCO is a WCO$_{\Co}$ can be of a very different nature; they can also vary by the object: $X$, $Y$, $\mathbf{F}$, $\mathbf{E}$, $\Phi$ or $\omega$. We find it most meaningful to focus on the conditions on $\mathbf{F}$ and $\mathbf{E}$, occasionally demanding $\omega$ to be bounded or non-vanishing. In fact, it turns out that the definite role in the considered question is played by $X$ and $\mathbf{F}$: if the point evaluations on $\mathbf{F}$ are linearly independent, any WCO with a non-vanishing weight is a WCO$_{\Co}$ whenever $X$ is compact (Theorem \ref{cwco}) or $\mathbf{F}$ generates the topology of $X$ and contains a non-zero constant function (part (iii) of Corollary \ref{recm}).

We will give characterizations in terms of properties of elements of NSCF's, or in terms of norms of point evaluation, etc and avoid more implicit conditions. In fact, some of the statements are axiomatic analogues of well-known properties of concrete spaces of functions.\medskip

Let $X$ be a (connected) complex manifold and let $\Ho\left(X\right)$ stand for the subspace of $\Co\left(X\right)$, which consists of holomorphic functions. We will call a NSCF $\mathbf{F}$ over $X$ a \emph{normed space of holomorphic functions} (NSHF), if $\mathbf{F}\subset\Ho\left(X\right)$.

If $\mathbf{F}$ and $\mathbf{E}$ are NSHF's over complex manifolds $X$ and $Y$ respectively, then a \emph{holomorphically induced weighted composition operator} (WCO$_{\Ho}$) between them is a WCO with holomorphic symbols. The problem of recognition of WCO$_{\Ho}$'s comes in two versions. Assume that $\omega:Y\to\F$ and $\Phi:Y\to X$ are such that $W_{\Phi,\omega}\in\Lo\left(\mathbf{F},\mathbf{E}\right)$. Now we can either ask for the conditions that guarantee that both $\Phi$ and $\omega$ are holomorphic, or rely on the previous results and ask for the conditions that guarantee that $\Phi$ and $\omega$ are holomorphic, provided that they are both continuous. Yet again, the most meaningful results concern $X$ and $\mathbf{F}$: see theorems \ref{hwco} and \ref{hcwco}, which give answers to each of the versions of the question.\medskip

Let us describe the contents of the article. In Section \ref{nw} we add some details about the NSCF's and discuss the most general properties of the WCO's and WCO$_{\Co}$'s between them.

Our main results are contained in Sections \ref{recs}, which is completely dedicated to the recognition problem in the continuous setting. We start with some elementary recognition results, but in the second half of the section we employ a non-trivial fact about topological vector spaces. Section \ref{h} is devoted to the same questions, but in the holomorphic setting. In fact, we largely reduce the recognition of holomorphically induced WCO's to the recognition of continuously induced WCO's.

Section \ref{tes} is of auxiliary nature. It is mostly dedicated to the proof of Theorem \ref{te} -- a general result about topological vector spaces, whose corollary (Proposition \ref{te1}) is extensively used in Section \ref{recs}. This theorem reconciles the concept of a cone over a topological space and a cone in a topological vector space. The proof is singled out in a separate section due to its length and the different context from that of the rest of the article.\medskip

\textbf{Some notations and conventions.} Let $\Fp=\F\backslash\left\{0\right\}$. If $a\in\F$ and $r>0$, then $B_{\F}\left(a,r\right)$ ($\overline{B}_{\F}\left(a,r\right)$) is the open (closed) ball centered at $a$ with radius $r$. In the case when $a=0$ we use notations $B_{\F}\left(r\right)$ and $\overline{B}_{\F}\left(r\right)$. We also use a special notation $\D=B_{\C}\left(1\right)$, i.e. the unit disk in the complex plane.

All topological (vector) spaces are assumed to be Hausdorff, and manifolds are assumed to be connected and without boundary. If $X$ is a locally compact space, $Y$ is a topological space and $f:X\to Y$, we say that $\lim\limits_{x\to\8}f\left(x\right)=y\in Y$, if for any open neighborhood $U$ of $y$ there is a compact $K\subset X$ such that $f\left(x\right)\in U$, whenever $x\not\in K$ (in particular, if $X$ is compact). If $X$ is not compact, this is equivalent to the fact that the limit of $f$ at the ideal element of the one point compactification of $X$ is equal to $y$. Additionally, in the case when $Y=\R$, one can define $\lim\limits_{x\to\8}f\left(x\right)=\pm\8$ in a similar way. Finally, denote $\Co_{0}\left(X\right)=\left\{f\in\Co\left(X\right) \left|\lim\limits_{x\to\8}f\left(x\right)=0\right.\right\}$.

\section{Properties of NSCF's and WCO's}\label{nw}

\textbf{NSCF's.} Let us start with some definitions. Let $Z$ be a set, let $Y$ be a topological space and let $\mathcal{E}$ be a collection of maps from $Z$ into $Y$. We will say that $\mathcal{E}$ \emph{separates points} of $Z$ if for any distinct $x,y\in Z$ there is $\Phi\in\mathcal{E}$, such that $\Phi\left(x\right)\ne\Phi\left(y\right)$. The topology on $Z$ \emph{generated} by $\mathcal{E}$ is the minimal topology which makes all elements of $\mathcal{E}$ continuous maps. In order for the topology generated by $\mathcal{E}$ to be separated, $\mathcal{E}$ must separate points of $Z$.

Throughout this subsection $X$ is a Hausdorff topological space. We will say that $\mathcal{E}\subset\Co\left(X,Y\right)$ generates the topology of $X$, if the topology generated by $\mathcal{E}$ coincides with the original topology of $X$. For example, the coordinate functions generate the topology of any subset of $\F^{d}$. Note that not any family of functions that separates points of $X$ generates its topology. For example, the single function $t\to e^{it}$ does not generate the topology of $\left[0,2\pi\right)$, despite being an injection.

For $A\subset Z$ denote $\mathcal{E}\left|_{A}\right.$ to be the collection of restrictions of elements of $\mathcal{E}$. We will say that $\mathcal{E}$ generates the topology/separates points of $A\subset X$ if $\mathcal{E}\left|_{A}\right.$ generates the topology/separates points of $A$.\medskip

Let $\mathbf{F}$ be a linear subspace of $\Fo\left(X\right)$. The map $\kappa_{\mathbf{F}}$ from the phase space $X$ into the algebraic dual $\mathbf{F}'$, defined by $\kappa_{\mathbf{F}}\left(x\right)=x_{\mathbf{F}}$ is the \emph{evaluation map} of $\mathbf{F}$. It is easy to see that $\mathbf{F}\subset\Co\left(X\right)$ if and only if $\kappa_{\mathbf{F}}$ is weak* continuous. More generally, every linear map $T$ from a linear space $F$ into $\Co\left(X\right)$ generates a weak* continuous map $\kappa_{T}:X\to F'$ defined by  $\left<f,\kappa_{T}\left(x\right)\right>=\left[Tf\right]\left(x\right)$, for $x\in X$ and $f\in F$.

Clearly, $\mathbf{F}$ separates points of $X$ if and only if $\kappa_{\mathbf{F}}$ is an injection, i.e. $x_{\mathbf{F}}\ne y_{\mathbf{F}}$, for every distinct $x,y\in X$. Consider the following strengthening of this condition. We will say that $\mathbf{F}$ is $2$-\emph{independent} if $x_{\mathbf{F}}$ and $y_{\mathbf{F}}$ are linearly independent, for every distinct $x,y\in X$. It is easy to see that this condition is equivalent to the existence of $f,g\in\mathbf{F}$ such that $f\left(x\right)=1$, $f\left(y\right)=0$, $g\left(x\right)=0$ and $g\left(y\right)=1$. Note that if $\mathbf{F}$ is $2$-independent, it separates points of $X$, if $\mathbf{F}$ contains non-zero constant functions and separates points, it is $2$-independent. However, the converses to these statements do not hold.

Observe that $\mathbf{F}$ generates the same topology on $X$ as the map $\kappa_{\mathbf{F}}$ from $X$ into $\mathbf{F}'$, endowed with the weak* topology. Indeed, the minimal topology needed for continuity of all elements of $\mathbf{F}$ is the minimal topology needed for weak* continuity of $\kappa_{\mathbf{F}}$. Also, recall that any injection from a compact space into a Hausdorff space is a topological embedding (i.e. homeomorphism onto its image). Thus, we obtain the following fact.

\begin{proposition}\label{topemb} Let $\mathbf{F}\subset\Co\left(X\right)$ be a linear subspace. Then:
\item[(i)] The map $\kappa_{\mathbf{F}}$ is a topological embedding from $X$ into $\mathbf{F}'$ endowed with the weak* topology if and only if $\mathbf{F}$ generates the topology of $X$.
\item[(ii)] If $X$ is compact, then $\kappa_{\mathbf{F}}$ is a topological embedding from $X$ into $\mathbf{F}'$ endowed with the weak* topology if and only if $\mathbf{F}$ separates points of $X$.
\end{proposition}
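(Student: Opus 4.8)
The plan is to unwind the definition of topological embedding and assemble it from the two facts recorded just before the statement: that the topology generated by $\mathbf{F}$ on $X$ is exactly the initial (coarsest) topology making the weak* continuous map $\kappa_{\mathbf{F}}$ continuous, and that a continuous injection from a compact space into a Hausdorff space is automatically a topological embedding. Write $\tau$ for the original topology of $X$ and $\tau_{\mathbf{F}}$ for the topology generated by $\mathbf{F}$; since $\mathbf{F}\subset\Co\left(X\right)$, the map $\kappa_{\mathbf{F}}$ is weak* continuous and hence $\tau_{\mathbf{F}}\subseteq\tau$ always.

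For (i), I would first note that a continuous injection is a topological embedding exactly when the domain topology coincides with the initial topology induced by the map; hence $\kappa_{\mathbf{F}}$ is a topological embedding if and only if it is injective and $\tau=\tau_{\mathbf{F}}$ (using that the initial topology induced by $\kappa_{\mathbf{F}}$ is $\tau_{\mathbf{F}}$). The remaining point is that $\tau=\tau_{\mathbf{F}}$ already forces injectivity of $\kappa_{\mathbf{F}}$: if $f\left(x\right)=f\left(y\right)$ for all $f\in\mathbf{F}$, then $x$ and $y$ belong to exactly the same members of the subbasis $\left\{f^{-1}\left(O\right):f\in\mathbf{F},\ O\text{ open in }\F\right\}$ of $\tau_{\mathbf{F}}$, hence to the same $\tau_{\mathbf{F}}$-open sets, which for $x\ne y$ contradicts the Hausdorffness of $\tau=\tau_{\mathbf{F}}$. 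Thus $\mathbf{F}$ separates the points of $X$, $\kappa_{\mathbf{F}}$ is injective, and we obtain ``$\kappa_{\mathbf{F}}$ is an embedding'' $\iff$ ``$\tau=\tau_{\mathbf{F}}$'' $\iff$ ``$\mathbf{F}$ generates the topology of $X$''.

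For (ii), the forward implication needs nothing new, since a topological embedding is injective and so $\mathbf{F}$ separates the points of $X$. For the converse, assume $X$ is compact and $\mathbf{F}$ separates points; then $\kappa_{\mathbf{F}}$ is a continuous injection from the compact space $X$ into $\mathbf{F}'$ with its weak* topology, which is Hausdorff, so $\kappa_{\mathbf{F}}$ is a topological embedding by the recalled fact (equivalently, $\tau=\tau_{\mathbf{F}}$ in this case, so one could instead appeal to part (i)). I do not anticipate a genuine obstacle: all the substance lies in correctly matching ``topological embedding'' with ``injective together with initial topology $=$ the given topology'' and in the elementary observation that a Hausdorff initial topology forces the generating family to separate points; the rest has been prepared in the paragraphs preceding the statement.
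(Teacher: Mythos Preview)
Your argument is correct and matches the paper's approach: the paper does not give a formal proof but simply records, in the paragraph preceding the proposition, the two ingredients you use (that the $\mathbf{F}$-generated topology coincides with the initial topology of $\kappa_{\mathbf{F}}$, and the compact-to-Hausdorff embedding fact), together with the earlier remark that a Hausdorff generated topology forces point separation. You have unpacked these observations into a clean proof; there is nothing to add.
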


Note that most of the NSCF's that arise naturally do generate the topology of their phase space. Non-trivial examples of the NSCF without this property are constructed in examples \ref{halfinterval} and \ref{discalgebra}.

For any $A\subset X$, define $A_{\mathbf{F}}=\kappa_{\mathbf{F}}\left(A\right)\subset\mathbf{F}'$. In the case when $A$ is dense we get that $A_{\mathbf{F}}^{\bot}=\left\{0\right\}$, since a continuous function that vanishes on a dense set vanishes identically. If $A$ is compact, then $A_{\mathbf{F}}$ is weak* compact, since $\kappa_{\mathbf{F}}$ is weak* continuous.\medskip

We will say that a NSCF $\mathbf{F}$ is \emph{compactly embedded} if the inclusion operator from $\mathbf{F}$ into $\Co\left(X\right)$ is compact, i.e. the unit ball of $\mathbf{F}$ is relatively compact in $\Co\left(X\right)$. We will need the following characterization.

\begin{proposition}[Bartle, \cite{bartle}, Wada, \cite{wada}]\label{dualstandard} A NSCF $\mathbf{F}$ over a locally compact space $X$ is a compactly embedded if and only if $\kappa_{\mathbf{F}}$ is norm-continuous into $\mathbf{F}^{*}$.
\end{proposition}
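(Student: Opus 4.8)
The plan is to translate both sides of the equivalence into statements about the unit ball $B_{\mathbf{F}}=\{f\in\mathbf{F}:\|f\|\le1\}$, viewed as a family of functions on $X$, and then to appeal to the Arzel\`a--Ascoli theorem, which (for $X$ locally compact and $\Co\left(X\right)$ carrying the compact-open topology) characterizes the relatively compact subsets of $\Co\left(X\right)$ as exactly the pointwise bounded, equicontinuous ones. The bridge between the two descriptions is the elementary identity
$$\left\|x_{\mathbf{F}}-y_{\mathbf{F}}\right\|_{\mathbf{F}^{*}}=\sup_{f\in B_{\mathbf{F}}}\left|f\left(x\right)-f\left(y\right)\right|,\qquad x,y\in X,$$
together with $\left\|x_{\mathbf{F}}\right\|_{\mathbf{F}^{*}}=\sup_{f\in B_{\mathbf{F}}}\left|f\left(x\right)\right|$. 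The first identity shows that, for a fixed $x$, norm-continuity of $\kappa_{\mathbf{F}}$ at $x$ is precisely equicontinuity of the family $B_{\mathbf{F}}$ at $x$: in both cases the assertion is that for every $\varepsilon>0$ there is a neighborhood $U$ of $x$ with $\left|f\left(y\right)-f\left(x\right)\right|\le\varepsilon$ for all $y\in U$ and all $f\in B_{\mathbf{F}}$. Hence $\kappa_{\mathbf{F}}$ is norm-continuous on $X$ exactly when $B_{\mathbf{F}}$ is an equicontinuous family, while the second identity shows that $B_{\mathbf{F}}$ is automatically pointwise bounded, since $\left\|x_{\mathbf{F}}\right\|_{\mathbf{F}^{*}}<\8$ for every $x$ by the very definition of a NSCF.

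Granting these observations, both implications are immediate. If $\mathbf{F}$ is compactly embedded, then $B_{\mathbf{F}}$ is relatively compact in $\Co\left(X\right)$, hence equicontinuous by Arzel\`a--Ascoli, and therefore $\kappa_{\mathbf{F}}$ is norm-continuous into $\mathbf{F}^{*}$. Conversely, if $\kappa_{\mathbf{F}}$ is norm-continuous, then $B_{\mathbf{F}}$ is equicontinuous by the previous paragraph and pointwise bounded for free, so Arzel\`a--Ascoli yields that $B_{\mathbf{F}}$ is relatively compact in $\Co\left(X\right)$; this is precisely the statement that $J_{\mathbf{F}}$ is a compact operator, i.e. that $\mathbf{F}$ is compactly embedded. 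If one prefers not to invoke Arzel\`a--Ascoli as a black box, the converse can be carried out by hand: given a net in $B_{\mathbf{F}}$, pointwise boundedness produces a pointwise convergent subnet, and equicontinuity forces its limit to lie in $\Co\left(X\right)$ with the convergence uniform on compact subsets of $X$, so $B_{\mathbf{F}}$ is relatively compact in the compact-open topology.

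There is no genuinely hard step here; the only points needing a little care are that ``norm-continuity of $\kappa_{\mathbf{F}}$'' must be read as continuity at every point of $X$, so that it matches equicontinuity of $B_{\mathbf{F}}$ at every point, and that the version of Arzel\`a--Ascoli used is the one valid for $X$ merely locally compact Hausdorff — for which it is relevant that equicontinuity of a family at each point of a compact set automatically upgrades to uniform equicontinuity on that set by the usual finite-subcover argument. This is exactly why the pointwise form of equicontinuity is already the correct hypothesis, and why local compactness of $X$ is the right ambient assumption.
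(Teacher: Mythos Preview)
Your argument is correct and is essentially the classical one: the identity $\left\|x_{\mathbf{F}}-y_{\mathbf{F}}\right\|=\sup_{\|f\|\le1}\left|f\left(x\right)-f\left(y\right)\right|$ converts norm-continuity of $\kappa_{\mathbf{F}}$ into equicontinuity of $B_{\mathbf{F}}$, after which Arzel\`a--Ascoli for $\Co\left(X\right)$ with the compact-open topology (valid since $X$ is locally compact Hausdorff) finishes both implications. The only remark I would add is that in your ``by hand'' paragraph the existence of a pointwise convergent subnet is really an appeal to Tychonoff's theorem on $\prod_{x\in X}\overline{B}_{\F}\left(\left|\kappa\right|_{\mathbf{F}}\left(x\right)\right)$; this is fine, but worth making explicit since $X$ is not assumed metrizable or separable.

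As for comparison with the paper: the paper does not prove this proposition at all. It is stated with attributions to Bartle and Wada and used as a black box, so there is nothing to compare your approach to beyond noting that what you wrote is exactly the standard proof underlying those references.
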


Denote $\left|\kappa\right|_{\mathbf{F}}\left(x\right)=\|x_{\mathbf{F}}\|$. It follows from the definition of NSCF that $\left|\kappa\right|_{\mathbf{F}}$ is bounded on compact subsets of $X$. Furthermore, the proposition above implies that $\left|\kappa\right|_{\mathbf{F}}$ is continuous on $X$, whenever $\mathbf{F}$ is compactly embedded. Moreover, the converse is true under some additional assumptions about geometry of $\mathbf{F}$ (e.g. if $\mathbf{F}$ is a Hilbert space).\medskip

\textbf{WCO's. }Let us turn to the properties of WCO's. Throughout this subsection $X$ and $Y$ are locally compact, and $\mathbf{F}\subset\Co\left(X\right)$, $\mathbf{E}\subset\Co\left(Y\right)$ are linear subspaces.

Let $\Phi:Y\to X$ and $\omega:Y\to\F $. In the case when $\omega\equiv 1$, or $X=Y$ and $\Phi=Id$ we use special notations $C_{\Phi}$ and $M_{\omega}$, and terms \emph{composition operator} (CO) and \emph{multiplication operator} (MO) respectively for the corresponding WCO's. If $Z$ is locally compact, $\Psi:Z\to Y$ and $\upsilon:Z\to\F $, then $$W_{\Psi,\upsilon}W_{\Phi,\omega}=M_{\upsilon}C_{\Psi}M_{\omega}C_{\Phi}=M_{ \upsilon\cdot\omega\circ\Psi}C_{\Phi\circ\Psi}=W_{\Phi\circ\Psi,\upsilon\cdot\omega\circ\Psi}.$$ In particular, if $X=Y$, then the semi-group of weighted composition operators on $\Fo\left(X\right)$ is the semi-direct product of semi-groups of multiplication and composition operators. Also, note that the identity operator is equal to $W_{Id,1}=C_{Id}=M_{1}$, while zero-operator is equal to $W_{\Phi,0}=M_{0}$, where $\Phi$ is an arbitrary map from $Y$ into $X$. In particular, we can choose $\Phi$ to be a constant map.

Note that if $W_{\Phi,\omega}\mathbf{F}\subset\mathbf{E} $, then we also have that $W_{\Phi,\omega}$ maps $\mathbf{F}$ into $\Co\left(Y\right)$. Hence, this operator generates a weak* continuous map $\lambda=\kappa_{W_{\Phi,\omega}}$ from $Y$ to $\mathbf{F}'$ (see the beginning of the section). It is easy to show that $\lambda\left(y\right)=\omega\left(y\right)\left(\Phi\left(y\right)\right)_{\mathbf{F}}$. This observation allows us to study WCO's with no reference to $\mathbf{E}$.

If $\mathbf{F}$ and $\mathbf{E}$ are both complete NSCF's and $W_{\Phi,\omega}\mathbf{F}\subset\mathbf{E} $, then $W_{\Phi,\omega}$ is automatically continuous due to Closed Graph theorem. However, in concrete cases it can be very difficult to determine all CO's, MO's and WCO's between a given pair of NSCF's. Moreover, it is possible that $W_{\Phi,\omega}$ is continuous, while neither $M_{\omega}$ nor $C_{\Phi}$ are well defined\footnote{This is the case, e.g. if $X=\C$, $\mathbf{F}$ is the Fock space, $\Phi\left(x\right)=x+1$ and $\omega\left(x\right)=e^{-x}$. The continuity and discontinuity of the corresponding operators follow from \cite[Theorem 2.2]{le}.}. Clearly, if both $M_{\omega}$ and $C_{\Phi}$ are well defined, then $W_{\Phi,\omega}=M_{\omega}C_{\Phi}$. If in this case $W_{\Phi,\omega}$ is invertible, then so are $M_{\omega}$ and $C_{\Phi}$.

\begin{remark}\label{wcoco}
While CO's are obviously partial cases of WCO's, a lot of WCO's can be viewed as CO's. For $f\in\mathbf{F}$ define $\widetilde{f}:\Fp\times X\to\F$ by $\widetilde{f}\left(a,x\right)=af\left(x\right)$, for every $x\in X$ and $a\in\Fp$. Denote $\widetilde{\mathbf{F}}=\left\{\widetilde{f}\left|f\in \mathbf{F}\right.\right\}$. If $\mathbf{F}$ is a (compactly embedded) NSCF over $X$, then $\widetilde{\mathbf{F}}$ is a (compactly embedded) NSCF over $\Fp\times X$ with respect to the norm $\|\widetilde{f}\|=\|f\|$. Note that $\widetilde{\mathbf{F}}$ separates points of $\Fp\times X$ if and only if $\mathbf{F}$ is $2$-independent.

If $\Phi:Y\to X$ and $\omega:Y\to\Fp$ are such that $W_{\Phi,\omega}\mathbf{F}\subset\mathbf{E} $, then viewing $\omega\times \Phi$ as a map from $Y$ into $\Fp\times X$, we get that $C_{\omega\times \Phi}\widetilde{f}=W_{\Phi,\omega}f$, for every $f\in\mathbf{F}$, and in particular $C_{\omega\times \Phi}\widetilde{\mathbf{F}}\subset\mathbf{E}$.\qed
\end{remark}

Among the very few special classes of functions that are available in this general setting is the class of constant functions. The presence of these functions in $\mathbf{F}$ puts a strict restriction on the possible multiplicative symbols of WCO's on $\mathbf{F}$. Namely, if $1\in\mathbf{F}$ and $\Phi:Y\to X$ and $\omega:Y\to\F$ are such that $W_{\Phi,\omega}\mathbf{F}\subset\mathbf{E} $, then $\omega=W_{\Phi,\omega}1\in\mathbf{E} $. In particular, $\omega$ is continuous. We will often use this simple observation.

As we mentioned in the introduction, the WCO, CO and MO can be characterized by the way their (algebraic) adjoints ``commute'' with the functor $\kappa$.

\begin{proposition}\label{rec}
Let $T$ be a linear map from $\mathbf{F}$ into $\mathbf{E}$. Then $T=W_{\Phi,\omega}$, for $\Phi:Y\to X$ and $\omega:Y\to\F $ if and only if $T'y_{\mathbf{E}}=\omega\left(y\right)\Phi\left(y\right)_{\mathbf{F}}$, for every $y\in Y$.
\end{proposition}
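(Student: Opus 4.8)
The plan is simply to unwind the definitions on both sides of the stated equivalence and observe that they match verbatim; no topology, completeness, or continuity of $T$, $\Phi$ or $\omega$ will be needed. Recall that here $\mathbf{E}'$ denotes the algebraic dual, $y_{\mathbf{E}}\in\mathbf{E}'$ is the point evaluation $f\mapsto f\left(y\right)$ on $\mathbf{E}$, $\Phi\left(y\right)_{\mathbf{F}}\in\mathbf{F}'$ is the point evaluation $f\mapsto f\left(\Phi\left(y\right)\right)$ on $\mathbf{F}$, and the (algebraic) adjoint $T':\mathbf{E}'\to\mathbf{F}'$ acts by $\left<f,T'\psi\right>=\left<Tf,\psi\right>$ for $f\in\mathbf{F}$, $\psi\in\mathbf{E}'$.

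The one computation doing all the work, valid for any $y\in Y$ and $f\in\mathbf{F}$, is the pair of chains
$$\left<f,T'y_{\mathbf{E}}\right>=\left<Tf,y_{\mathbf{E}}\right>=\left[Tf\right]\left(y\right),\qquad \left<f,\omega\left(y\right)\Phi\left(y\right)_{\mathbf{F}}\right>=\omega\left(y\right)f\left(\Phi\left(y\right)\right)=\left[W_{\Phi,\omega}f\right]\left(y\right).$$
For the forward implication I would assume $T=W_{\Phi,\omega}$ on $\mathbf{F}$; then the two right-hand sides above agree for all $y$ and $f$, so $\left<f,T'y_{\mathbf{E}}\right>=\left<f,\omega\left(y\right)\Phi\left(y\right)_{\mathbf{F}}\right>$ for every $f\in\mathbf{F}$, whence $T'y_{\mathbf{E}}=\omega\left(y\right)\Phi\left(y\right)_{\mathbf{F}}$ in $\mathbf{F}'$, and $y$ was arbitrary. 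Conversely, assuming $T'y_{\mathbf{E}}=\omega\left(y\right)\Phi\left(y\right)_{\mathbf{F}}$ for all $y\in Y$, the same chains give $\left[Tf\right]\left(y\right)=\left[W_{\Phi,\omega}f\right]\left(y\right)$ for every $y$, i.e. $Tf=W_{\Phi,\omega}f$ in $\Fo\left(Y\right)$; letting $f$ range over $\mathbf{F}$ yields $T=W_{\Phi,\omega}\left|_{\mathbf{F}}\right.$, and since $T\mathbf{F}\subset\mathbf{E}$ we also get $W_{\Phi,\omega}\mathbf{F}\subset\mathbf{E}$, so $W_{\Phi,\omega}$ really is a WCO from $\mathbf{F}$ into $\mathbf{E}$.

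There is no genuine obstacle here; the statement is a direct translation between two languages. The only thing requiring care is the bookkeeping: keeping track of which algebraic dual each functional lives in (that $y_{\mathbf{E}}\in\mathbf{E}'$, $T':\mathbf{E}'\to\mathbf{F}'$, $\Phi\left(y\right)_{\mathbf{F}}\in\mathbf{F}'$), and remembering that the equality of two elements of $\mathbf{F}'$ is tested against all $f\in\mathbf{F}$, while the equality of two functions on $Y$ (or of two operators on $\mathbf{F}$) is tested against all $y\in Y$ (all $f\in\mathbf{F}$). This is also precisely why the proposition can be stated for an arbitrary linear $T$, with no appeal to the Closed Graph theorem or boundedness of point evaluations.
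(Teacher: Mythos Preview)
Your proof is correct and follows exactly the same approach as the paper's: both simply compute $\left<f,T'y_{\mathbf{E}}\right>=\left[Tf\right]\left(y\right)$ and $\left<f,\omega\left(y\right)\Phi\left(y\right)_{\mathbf{F}}\right>=\left[W_{\Phi,\omega}f\right]\left(y\right)$ and observe the equivalence by testing against all $f\in\mathbf{F}$ and $y\in Y$. Your version is somewhat more verbose (spelling out the bookkeeping and the observation that $W_{\Phi,\omega}\mathbf{F}\subset\mathbf{E}$ follows), but the argument is identical in substance.
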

\begin{proof}
For every $y\in Y$ and $f\in \mathbf{F}$ we have $\left[Tf\right]\left(y\right)=\left<Tf,y_{\mathbf{E}}\right>=\left<f,T'y_{\mathbf{E}}\right>$. Hence, if $T'y_{\mathbf{E}}=\omega\left(y\right)\Phi\left(y\right)_{\mathbf{F}}$, then $\left[Tf\right]\left(y\right)=\omega\left(y\right)f\left(\Phi\left(y\right)\right)$. Conversely, if for $y\in Y$ we have that $$\left<f,T'y_{\mathbf{E}}\right>=\left[Tf\right]\left(y\right)=\omega\left(y\right)f\left(\Phi\left(y\right)\right)=\left<f,\omega\left(y\right)\Phi\left(y\right)_{\mathbf{F}}\right>,$$ for every $f\in \mathbf{F}$, it follows that $T'y_{\mathbf{E}}=\omega\left(y\right)\Phi\left(y\right)_{\mathbf{F}}$.
\end{proof}

In the same way the adjoint to $C_{\Phi}$ is characterised by being a ``linear extension'' of $\Phi$, i.e. $C'_{\Phi}y_{\mathbf{E}}=\Phi\left(y\right)_{\mathbf{F}}$, and if $X=Y$, then $M'_{\omega}y_{\mathbf{E}}=\omega\left(y\right)y_{\mathbf{F}}$. In particular, if $\mathbf{F}=\mathbf{E}$, then  $M_{\omega}$ is characterized by the fact that each element of $Y_{\mathbf{E}}$ is an eigenvector of $M'_{\omega}$.

\begin{corollary}\label{cor}
\item[(i)] $T$ is a WCO if and only if $ T'Y_{\mathbf{E}}\subset \F X_{\mathbf{F}}$.
\item[(ii)] $T$ is a CO if and only if $ T'Y_{\mathbf{E}}\subset X_{\mathbf{F}}$.
\item[(iii)] If $X=Y$, then $T$ is a MO if and only if $ T'y_{\mathbf{E}}\in\F y_{\mathbf{F}}$, for each $y\in Y$.
\end{corollary}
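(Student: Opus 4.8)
The statement to prove is Corollary~\ref{cor}, which has three parts characterizing when a linear operator $T:\mathbf{F}\to\mathbf{E}$ is a WCO, CO, or MO in terms of where $T'$ sends the point evaluations $Y_{\mathbf{E}}$.

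The plan is to deduce all three parts directly from Proposition~\ref{rec}, which has just been proved. The key observation that makes this work is the hypothesis of $2$-independence (or at least point-separation) lurking in the background: the map $y\mapsto y_{\mathbf{F}}$ is the evaluation map $\kappa_{\mathbf{F}}$, and the coefficient $\omega(y)$ together with the point $\Phi(y)$ is essentially recovered from the functional $T'y_{\mathbf{E}}$ once we know it lies in $\F X_{\mathbf{F}}$.

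Let me sketch each part. For (i): if $T=W_{\Phi,\omega}$, Proposition~\ref{rec} gives $T'y_{\mathbf{E}}=\omega(y)\Phi(y)_{\mathbf{F}}\in\F X_{\mathbf{F}}$ immediately, so $T'Y_{\mathbf{E}}\subset\F X_{\mathbf{F}}$. Conversely, suppose $T'Y_{\mathbf{E}}\subset\F X_{\mathbf{F}}$. Then for each $y\in Y$ we may write $T'y_{\mathbf{E}}=a_y\,(x_y)_{\mathbf{F}}$ for some scalar $a_y\in\F$ and some $x_y\in X$; setting $\omega(y)=a_y$ and $\Phi(y)=x_y$ (choosing $\Phi(y)$ arbitrarily when $a_y=0$, since then $T'y_{\mathbf{E}}=0$ regardless) puts us in the situation of Proposition~\ref{rec}, hence $T=W_{\Phi,\omega}$. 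One should remark that this defines $\Phi,\omega$ as genuine functions $Y\to X$, $Y\to\F$ (using choice if necessary), with no continuity claimed — that is exactly the recognition problem studied later. For (ii): $T$ is a CO means $T=W_{\Phi,1}$, i.e.\ $\omega\equiv 1$; by Proposition~\ref{rec} this is equivalent to $T'y_{\mathbf{E}}=\Phi(y)_{\mathbf{F}}\in X_{\mathbf{F}}$ for all $y$, i.e.\ $T'Y_{\mathbf{E}}\subset X_{\mathbf{F}}$. Conversely, if $T'Y_{\mathbf{E}}\subset X_{\mathbf{F}}$, then for each $y$ we have $T'y_{\mathbf{E}}=x_{\mathbf{F}}$ for a unique such $x$ (uniqueness if $\mathbf{F}$ separates points; otherwise just pick one) — define $\Phi(y)=x$, and Proposition~\ref{rec} with $\omega\equiv1$ gives $T=C_\Phi$. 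For (iii): with $X=Y$, $T$ is a MO means $T=W_{Id,\omega}$, i.e.\ $\Phi=Id$; by Proposition~\ref{rec}, $T=M_\omega$ iff $T'y_{\mathbf{E}}=\omega(y)\,y_{\mathbf{F}}$ for all $y$, which says precisely $T'y_{\mathbf{E}}\in\F y_{\mathbf{F}}$ for each $y$ (and when $T'y_{\mathbf{E}}\in\F y_{\mathbf{F}}$ one reads off the scalar $\omega(y)$, unambiguously when $y_{\mathbf{F}}\ne 0$, arbitrarily otherwise).

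The only subtle point — and the step I expect to need a word of care rather than real work — is the converse directions, where one must actually produce the functions $\Phi$ and $\omega$ from the containment of $T'Y_{\mathbf{E}}$ in the appropriate set. This is where one silently invokes that a functional in $\F X_{\mathbf{F}}$ is of the form $a\,x_{\mathbf{F}}$, selects such a representation pointwise in $y$, and observes that different admissible choices (when $\omega(y)=0$, or when point evaluations coincide) all yield the same operator $T$. Since the statement only asserts existence of \emph{some} $\Phi,\omega$, no measurability or continuity of the selection is required, and the proof is essentially a restatement of Proposition~\ref{rec} with the quantifier over $(\Phi,\omega)$ moved appropriately. I would write the proof of (i) in full in two or three lines and then note that (ii) and (iii) are the specializations obtained by freezing $\omega\equiv1$ and $\Phi=Id$ respectively.
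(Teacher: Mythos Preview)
Your proposal is correct and matches the paper's approach: the paper gives no separate proof for Corollary~\ref{cor}, treating it as an immediate consequence of Proposition~\ref{rec} (together with the one-line remark preceding the corollary that $C'_{\Phi}y_{\mathbf{E}}=\Phi(y)_{\mathbf{F}}$ and $M'_{\omega}y_{\mathbf{E}}=\omega(y)y_{\mathbf{F}}$). Your handling of the converse directions---selecting $(\omega(y),\Phi(y))$ pointwise from a representation $T'y_{\mathbf{E}}=a_y(x_y)_{\mathbf{F}}$, with arbitrary choice when $a_y=0$ or when point evaluations coincide---is exactly the content the paper leaves implicit.
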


Now let us relate the most basic properties of $\omega$, $\Phi$ and $W_{\Phi,\omega}$. Most of these facts are simple and appear in the literature in various contexts, but we will state them within our general framework.

\begin{proposition}\label{winj} Let $\Phi:Y\to X$ and $\omega:Y\to\F$ be such that $W_{\Phi,\omega}\mathbf{F}\subset\Co\left(Y\right) $. Then:
\item[(i)] If $\omega$ does not vanish and $\Phi$ has a dense image, then $W_{\Phi,\omega}$ is an injection.
\item[(ii)] If $W_{\Phi,\omega}\mathbf{F}$ is $2$-independent, then $\Phi$ is an injection and $\omega$ does not vanish.
\end{proposition}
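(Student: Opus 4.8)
The plan is to handle the two parts separately, in each case translating the hypothesis into a statement about the evaluation functionals and the induced map $\lambda(y)=\omega(y)\Phi(y)_{\mathbf F}$ that was introduced just before Proposition \ref{rec}.

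For part (i), I would argue by contradiction: suppose $W_{\Phi,\omega}f=0$ for some $f\in\mathbf F$. This means $\omega(y)f(\Phi(y))=0$ for every $y\in Y$. Since $\omega$ never vanishes, we get $f(\Phi(y))=0$ for all $y\in Y$, i.e. $f$ vanishes on the image $\Phi(Y)\subset X$. But $f$ is continuous on $X$ and $\Phi(Y)$ is dense, so $f$ vanishes identically on $X$; hence $f=0$ in $\mathbf F$. This shows $\Ker W_{\Phi,\omega}=\{0\}$, so $W_{\Phi,\omega}$ is injective. This part is essentially immediate and I expect no obstacle.

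For part (ii), assume $W_{\Phi,\omega}\mathbf F$ is $2$-independent as a space of functions on $Y$. First I would show $\omega$ does not vanish: if $\omega(y_0)=0$, then every $g\in W_{\Phi,\omega}\mathbf F$ satisfies $g(y_0)=\omega(y_0)f(\Phi(y_0))=0$, so $(y_0)_{W_{\Phi,\omega}\mathbf F}=0$, the zero functional; but a $2$-independent space has all point evaluations nonzero (being linearly independent in pairs forces each to be nonzero — or directly, $2$-independence provides $g$ with $g(y_0)=1$), a contradiction. Next, for injectivity of $\Phi$: suppose $\Phi(y_1)=\Phi(y_2)=x$ with $y_1\ne y_2$. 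Then for every $f\in\mathbf F$, $[W_{\Phi,\omega}f](y_i)=\omega(y_i)f(x)$, so $(y_1)_{W_{\Phi,\omega}\mathbf F}=\omega(y_1)x_{\mathbf F}\circ(\text{restriction})$ and similarly for $y_2$; more precisely, as functionals on $W_{\Phi,\omega}\mathbf F$ we have $(y_1)_{W_{\Phi,\omega}\mathbf F}(W_{\Phi,\omega}f)=\omega(y_1)f(x)$ and $(y_2)_{W_{\Phi,\omega}\mathbf F}(W_{\Phi,\omega}f)=\omega(y_2)f(x)$, which are scalar multiples of one another (both proportional to $f\mapsto f(x)$), so $(y_1)_{W_{\Phi,\omega}\mathbf F}$ and $(y_2)_{W_{\Phi,\omega}\mathbf F}$ are linearly dependent, contradicting $2$-independence. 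Hence $\Phi$ is injective.

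The mild subtlety to be careful about — the nearest thing to an obstacle — is the bookkeeping of functionals on the range space $W_{\Phi,\omega}\mathbf F$ versus functionals on $\mathbf F$: the point evaluations in the definition of $2$-independence live on $W_{\Phi,\omega}\mathbf F$, and one must check that $[W_{\Phi,\omega}f](y)$ depends on $f$ only through $f(\Phi(y))$ (which is exactly the defining formula) to conclude the claimed proportionality. Since $W_{\Phi,\omega}\mathbf F\subset\Co(Y)$ is given, these evaluations make sense, and no completeness or continuity of $W_{\Phi,\omega}$ is needed for either part. I would also note for part (ii) that one does not even need $W_{\Phi,\omega}\mathbf F$ to separate points of all of $Y$ a priori — $2$-independence of the range is the only hypothesis used, and it yields both conclusions at once.
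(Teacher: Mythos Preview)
Your proposal is correct and matches the paper's proof in substance. For (i) the paper phrases the same computation via the map $\lambda=\omega\cdot\kappa_{\mathbf F}\circ\Phi$ and the annihilator identity $\Ker W_{\Phi,\omega}=\lambda(Y)^{\bot}\subset\Phi(Y)_{\mathbf F}^{\bot}=\{0\}$, while you unwind this pointwise; for (ii) the paper writes the linear dependence as $\omega(y_2)\,g(y_1)=\omega(y_1)\,g(y_2)$ for $g\in\mathbf H=W_{\Phi,\omega}\mathbf F$, which is exactly your proportionality argument and sidesteps the bookkeeping issue you flagged (since this relation is stated directly on $\mathbf H$ without reference to a choice of preimage $f$).
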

\begin{proof}
(i): Recall that $\lambda=\omega\cdot\kappa_{\mathbf{F}}\circ\Phi$ is a weak* continuous map from $Y$ to $\mathbf{F}'$. The set $\Phi\left(Y\right)$ is dense in $X$, and so $$\Ker W_{\Phi,\omega}=\lambda\left(Y\right)^{\bot}\subset \left(\Phi\left(Y\right)_{\mathbf{F}}\right)^{\bot}=\left\{0\right\}.$$

(ii): Denote $\mathbf{H}=W_{\Phi,\omega}\mathbf{F}$. If $\omega\left(y\right)=0$, then $y_{\mathbf{H}}=0$, which contradicts the $2$-independence. If $\Phi\left(x\right)=\Phi\left(y\right)$, for some $x,y\in Y$, then $\omega\left(y\right)x_{\mathbf{H}}=\omega\left(x\right)y_{\mathbf{H}}$. Indeed, for any $g\in \mathbf{H}$ there is $f\in \mathbf{F}$ such that $g=W_{\Phi,\omega}f$, and so $$\omega\left(y\right)g\left(x\right)=\omega\left(y\right)\omega\left(x\right)f\left(\Phi\left(x\right)\right)=\omega\left(x\right)\omega\left(y\right)f\left(\Phi\left(y\right)\right)=\omega\left(x\right)g\left(y\right).$$
If $\mathbf{H}$ is $2$-independent, then $\omega\left(y\right),\omega\left(x\right)\ne 0$, and so using $2$-independence again, we get $x=y$.
\end{proof}

It follows immediately from the proposition, that the multiplication operator with the weight that vanishes on a nowhere dense set is an injection.

\begin{corollary}\label{winj2}
If $\mathbf{F}$ and $\mathbf{E}$ are both NSCF's, $\mathbf{E}$ is $2$-independent and \linebreak $W_{\Phi,\omega}\in\Lo\left(\mathbf{F},\mathbf{E}\right)$, then $W_{\Phi,\omega}^{*}$ is injection $\Leftrightarrow$ $W_{\Phi,\omega}$ has dense image $\Rightarrow$ $\Phi$ is an injection and $\omega$ does not vanish.
\end{corollary}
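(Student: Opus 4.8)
The plan is to split the statement into two independent pieces: the equivalence ``$W_{\Phi,\omega}^{*}$ is an injection $\Leftrightarrow$ $W_{\Phi,\omega}$ has dense image'', which is a general fact about continuous linear operators between normed spaces and uses nothing about function spaces, and the implication to injectivity of $\Phi$ and non-vanishing of $\omega$, which I will reduce to Proposition \ref{winj}(ii).

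For the equivalence, write $T=W_{\Phi,\omega}\in\Lo\left(\mathbf{F},\mathbf{E}\right)$. For $\psi\in\mathbf{E}^{*}$ we have $\psi\in\Ker T^{*}$ if and only if $\left<Tf,\psi\right>=0$ for all $f\in\mathbf{F}$, i.e. if and only if $\psi$ annihilates $T\mathbf{F}$. By the Hahn--Banach theorem the closure of $T\mathbf{F}$ equals $\mathbf{E}$ precisely when the only element of $\mathbf{E}^{*}$ annihilating $T\mathbf{F}$ is $0$; combining the two observations gives $\Ker T^{*}=\left\{0\right\}$ if and only if $T\mathbf{F}$ is dense. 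No completeness of $\mathbf{F}$ or $\mathbf{E}$ is needed here.

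For the remaining implication, suppose $W_{\Phi,\omega}$ has dense image and set $\mathbf{H}=W_{\Phi,\omega}\mathbf{F}\subset\mathbf{E}\subset\Co\left(Y\right)$. I claim $\mathbf{H}$ is $2$-independent as a linear subspace of $\Fo\left(Y\right)$; granting this, Proposition \ref{winj}(ii) immediately yields that $\Phi$ is an injection and $\omega$ does not vanish. To prove the claim, fix distinct $x,y\in Y$ and scalars $\alpha,\beta$ with $\alpha x_{\mathbf{H}}+\beta y_{\mathbf{H}}=0$ on $\mathbf{H}$. Since $\mathbf{E}$ is an NSCF, $x_{\mathbf{E}},y_{\mathbf{E}}\in\mathbf{E}^{*}$, so $\alpha x_{\mathbf{E}}+\beta y_{\mathbf{E}}$ is a bounded functional on $\mathbf{E}$ vanishing on the dense subspace $\mathbf{H}$, hence $\alpha x_{\mathbf{E}}+\beta y_{\mathbf{E}}=0$; $2$-independence of $\mathbf{E}$ then forces $\alpha=\beta=0$, so $x_{\mathbf{H}}$ and $y_{\mathbf{H}}$ are linearly independent.

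I do not expect a genuine obstacle here: the only point requiring care is that the passage from ``vanishes on a dense subspace'' to ``vanishes identically'' relies on boundedness of the point evaluations, which is exactly the NSCF hypothesis on $\mathbf{E}$, and that the direction of Proposition \ref{winj}(ii) matches the one-way ``$\Rightarrow$'' in the statement. It is worth noting in passing that the converse of this last implication does fail in general (so it cannot be upgraded to an equivalence), which is why the corollary is phrased with an implication at that step.
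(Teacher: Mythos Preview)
Your proof is correct and follows exactly the route the paper intends: the equivalence is the standard Hahn--Banach duality fact, and the implication is obtained by checking that density of $W_{\Phi,\omega}\mathbf{F}$ in the $2$-independent NSCF $\mathbf{E}$ forces $W_{\Phi,\omega}\mathbf{F}$ itself to be $2$-independent, whereupon Proposition~\ref{winj}(ii) applies. The paper states the corollary without proof, so your write-up simply supplies the details the author left implicit.
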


\textbf{WCO$_{\Co}$. }Throughout this subsection $X$ and $Y$ are again locally compact, and $\mathbf{F}\subset\Co\left(X\right)$, $\mathbf{E}\subset\Co\left(Y\right)$ are linear subspaces. Let us now address the issue of recovering the symbols of WCO, briefly mentioned in the introduction.

\begin{proposition}\label{contprop} Let $\Phi,\Psi:Y\to X$ and $\omega,\upsilon:Y\to\F$ be continuous. Assume also that $\omega$ vanishes on a nowhere dense set, and that $W_{\Phi,\omega}\mathbf{F}\subset\Co\left(Y\right)$ and \linebreak $W_{\Psi,\upsilon}\mathbf{F}\subset\Co\left(Y\right)$. If $\mathbf{F}$ is $2$-independent and $W_{\Phi,\omega}=W_{\Psi,\upsilon}$, then $\Phi=\Psi$ and $\omega=\upsilon$.
\end{proposition}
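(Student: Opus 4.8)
The plan is to work entirely with the weak* continuous maps into $\mathbf{F}'$ associated to the two operators, and use $2$-independence to separate the scalar factor from the point-evaluation factor. Write $\lambda = \kappa_{W_{\Phi,\omega}}$ and $\mu = \kappa_{W_{\Psi,\upsilon}}$, so that by the observation in Section \ref{nw} we have $\lambda(y) = \omega(y)\,\Phi(y)_{\mathbf{F}}$ and $\mu(y) = \upsilon(y)\,\Psi(y)_{\mathbf{F}}$ for all $y \in Y$, and both maps are weak* continuous. The hypothesis $W_{\Phi,\omega} = W_{\Psi,\upsilon}$ gives $\lambda = \mu$, i.e.
$$\omega(y)\,\Phi(y)_{\mathbf{F}} = \upsilon(y)\,\Psi(y)_{\mathbf{F}}\qquad\text{for every }y\in Y.$$

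First I would dispose of the points where $\omega$ does not vanish. Fix $y$ with $\omega(y)\ne 0$. If $\Phi(y)\ne\Psi(y)$, then $\Phi(y)_{\mathbf{F}}$ and $\Psi(y)_{\mathbf{F}}$ are linearly independent by $2$-independence, yet the displayed identity expresses $\omega(y)\Phi(y)_{\mathbf{F}}$ as a scalar multiple of $\Psi(y)_{\mathbf{F}}$, forcing $\omega(y)\Phi(y)_{\mathbf{F}} = 0$ and hence $\omega(y)=0$ (since $\Phi(y)_{\mathbf{F}}\ne 0$, again by $2$-independence), a contradiction. So $\Phi(y)=\Psi(y)$, and then the identity reduces to $(\omega(y)-\upsilon(y))\,\Phi(y)_{\mathbf{F}} = 0$, whence $\omega(y)=\upsilon(y)$. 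Thus $\Phi=\Psi$ and $\omega=\upsilon$ on the open set $U = \{y : \omega(y)\ne 0\}$, which by hypothesis is dense in $Y$.

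It remains to upgrade agreement on the dense set $U$ to agreement on all of $Y$, and this is where continuity enters — and where I expect the only real subtlety. The equality $\omega=\upsilon$ on the dense set $U$ extends to all of $Y$ immediately by continuity of $\omega$ and $\upsilon$. For $\Phi$: given $y_0 \in Y\setminus U$, take a net $y_\alpha \to y_0$ with $y_\alpha \in U$; then $\Phi(y_\alpha)=\Psi(y_\alpha)$, and by continuity of $\Phi$ and $\Psi$ we get $\Phi(y_\alpha)\to\Phi(y_0)$ and $\Psi(y_\alpha)=\Phi(y_\alpha)\to\Psi(y_0)$, so $\Phi(y_0)=\Psi(y_0)$ since $X$ is Hausdorff. (Such a net exists because $U$ is dense; if one prefers to avoid nets, note $\{\Phi=\Psi\}$ is closed as the equalizer of two continuous maps into a Hausdorff space, and it contains the dense set $U$.) The main obstacle is thus essentially bookkeeping around the vanishing set of $\omega$: one must be careful that "nowhere dense" is exactly what guarantees $U$ is dense, and that $2$-independence is invoked correctly both to get linear independence of distinct point evaluations and to get nonvanishing of each individual $\Phi(y)_{\mathbf{F}}$. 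No completeness or boundedness hypotheses are needed, and $\mathbf{E}$ plays no role beyond housing the common image.
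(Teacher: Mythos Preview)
Your proof is correct and follows essentially the same route as the paper's: pass to the identity $\omega(y)\Phi(y)_{\mathbf{F}}=\upsilon(y)\Psi(y)_{\mathbf{F}}$, use $2$-independence to force $\Phi=\Psi$ and $\omega=\upsilon$ on the open dense set where $\omega\ne 0$, and then invoke continuity and Hausdorffness to extend to all of $Y$. The only difference is cosmetic---the paper divides by $\omega(y)$ and reads off the conclusion directly from $2$-independence, whereas you argue by contradiction and spell out the equalizer argument for the density step.
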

\begin{proof}
Let $Z=Y\backslash \omega^{-1}\left(0\right)$, which is an open dense set in $Y$. If $W_{\Phi,\omega}=W_{\Psi,\upsilon}$, then $W'_{\Phi,\omega}=W'_{\Psi,\upsilon}$, and so for any $y\in Y$ we have $\omega\left(y\right)\Phi\left(y\right)_{\mathbf{F}}=\upsilon\left(y\right)\Psi\left(y\right)_{\mathbf{F}}$. For every $y\in Z$ we have $\Phi\left(y\right)_{\mathbf{F}}=\frac{\upsilon\left(y\right)}{\omega\left(y\right)}\Psi\left(y\right)_{\mathbf{F}}$, and so $\Phi\left(y\right)_{\mathbf{F}}=\Psi\left(y\right)_{\mathbf{F}}$ and $\frac{\upsilon\left(y\right)}{\omega\left(y\right)}=1$, since $\mathbf{F}$ is $2$-independent. Hence, $\Phi$ and $\omega$ coincide with $\Psi$ and $\upsilon$ respectively on a dense set $Z$. Since all these maps are continuous, the result follows.
\end{proof}

\begin{remark}\label{contprop2}
Considering the discrete topology on both $X$ and $Y$ we can obtain a similar statement for the case of discontinuous $\Phi$ and $\omega$. \qed
\end{remark}

We will use MO$_{\Co}$'s and CO$_{\Co}$'s to define two operations with NSCF's. Let $\mathbf{F}$ be a NSCF over $X$. The \emph{rescaling} $\upsilon\mathbf{F}$ of $\mathbf{F}$ with respect to continuous $\upsilon:X\to\Fp$ is the linear space $M_{\upsilon}\mathbf{F}$ endowed with the push-forward norm from $\mathbf{F}$ (recall that $M_{\upsilon}$ is an injection). We will also call $\upsilon\mathbf{F}$ a \emph{bounded} rescaling, if $\upsilon$ is bounded. Let $Z$ be a locally compact space and let  $\Psi:Z\to X$ be a topological embedding (e.g. $Z\subset X$ and $\Psi$ is the inclusion map). A \emph{restriction} $\mathbf{F}\circ\Psi$ of $\mathbf{F}$ along $\Psi$ is the linear space $C_{\Psi}\mathbf{F}$ endowed with the push-forward norm from $\mathbf{F}\slash \Ker C_{\Psi}$. One can show that both rescaling and restriction preserve the $2$-independence and compact embeddedness. \medskip

We conclude the section with a discussion on when the inverse of a WCO is a WCO. Using Corollary \ref{winj2} and the remark above we obtain the following result.

\begin{corollary} Assume that $\mathbf{F}$ and $\mathbf{E}$ are $2$-independent. Let $\Phi:Y\to X$ be a surjection and let $\omega:Y\to\Fp$ be such that $W_{\Phi,\omega}$ is an invertible linear operator between $\mathbf{F}$ and $\mathbf{E}$. Then $W_{\Phi,\omega}^{-1}=W_{\Phi^{-1},\frac{1}{\omega\circ\Phi^{-1}}}$.
\end{corollary}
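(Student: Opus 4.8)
The plan is to exhibit the operator $W_{\Phi^{-1},\upsilon}$, where $\upsilon=\frac{1}{\omega\circ\Phi^{-1}}$, as a two-sided inverse of $W_{\Phi,\omega}$ already on the ambient spaces $\Fo\left(X\right)$ and $\Fo\left(Y\right)$, and then to restrict. The first step is to verify that $\Phi$ is a bijection, so that $\Phi^{-1}:X\to Y$ and $\upsilon:X\to\Fp$ are well defined (the latter because $\omega$ never vanishes). Surjectivity is part of the hypothesis; injectivity follows from Corollary \ref{winj2}, since $W_{\Phi,\omega}$, being invertible, has dense (indeed full) image and $\mathbf{E}$ is $2$-independent.

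The heart of the proof is a single application of the semigroup identity for WCO's from Section \ref{nw}. With $W_{\Phi,\omega}:\Fo\left(X\right)\to\Fo\left(Y\right)$ as the inner factor and $W_{\Phi^{-1},\upsilon}:\Fo\left(Y\right)\to\Fo\left(X\right)$ as the outer one, that identity yields $W_{\Phi^{-1},\upsilon}W_{\Phi,\omega}=W_{\Phi\circ\Phi^{-1},\,\upsilon\cdot\left(\omega\circ\Phi^{-1}\right)}=W_{Id,1}=Id$ on $\Fo\left(X\right)$, because $\upsilon\cdot\left(\omega\circ\Phi^{-1}\right)\equiv 1$; symmetrically one gets $W_{\Phi,\omega}W_{\Phi^{-1},\upsilon}=Id$ on $\Fo\left(Y\right)$ from $\omega\cdot\left(\upsilon\circ\Phi\right)\equiv 1$. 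To finish, restrict to the function spaces: as $W_{\Phi,\omega}$ maps $\mathbf{F}$ bijectively onto $\mathbf{E}$, every $g\in\mathbf{E}$ is $W_{\Phi,\omega}f$ for the unique $f=W_{\Phi,\omega}^{-1}g\in\mathbf{F}$, and then $W_{\Phi^{-1},\upsilon}g=W_{\Phi^{-1},\upsilon}W_{\Phi,\omega}f=f\in\mathbf{F}$. Hence $W_{\Phi^{-1},\upsilon}$ carries $\mathbf{E}$ into $\mathbf{F}$ and agrees on $\mathbf{E}$ with $W_{\Phi,\omega}^{-1}$, which is precisely the claim.

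I do not expect a real obstacle. The two points that require care are (a) checking that $\Phi^{-1}$ and $\upsilon$ make sense before one is allowed to write $W_{\Phi^{-1},\upsilon}$ --- this is where Corollary \ref{winj2} (injectivity of $\Phi$) and the hypothesis $\omega:Y\to\Fp$ enter --- and (b) keeping the order of the two factors straight when applying the semigroup identity, the likeliest place for a slip. If one additionally wants $W_{\Phi^{-1},\upsilon}$ to be continuous, it is automatic: for complete $\mathbf{F}$ and $\mathbf{E}$ the inverse $W_{\Phi,\omega}^{-1}$ is continuous by the open mapping theorem; but this is not needed for the stated equality of operators.
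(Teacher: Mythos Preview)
Your argument is correct and is essentially the paper's own: the paper merely says ``Using Corollary \ref{winj2} and the remark above we obtain the following result'', and what you wrote is the natural unpacking of that sentence --- Corollary \ref{winj2} (really Proposition \ref{winj}(ii), since surjectivity of $W_{\Phi,\omega}$ gives $W_{\Phi,\omega}\mathbf{F}=\mathbf{E}$, which is $2$-independent) supplies the injectivity of $\Phi$, and then the semigroup identity $W_{\Psi,\upsilon}W_{\Phi,\omega}=W_{\Phi\circ\Psi,\upsilon\cdot\omega\circ\Psi}$ from Section \ref{nw} yields the two-sided inverse on $\Fo\left(X\right)$, $\Fo\left(Y\right)$, which you then restrict. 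Your bookkeeping of the order of factors in the semigroup identity is correct.
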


Hence, a question of whether the inverse of a WCO is a WCO comes down to the surjectivity of the composition symbol. The following generalization of a result from \cite{nz} gives sufficient conditions that guarantee that $\Phi$ is a surjection.

\begin{proposition}\label{nin}
Assume that $\mathbf{F}$ and $\mathbf{E}$ are $2$-independent NSCF's with \linebreak$\lim\limits_{y\to\8}\left|\kappa\right|_{\mathbf{E}}\left(y\right)= +\8$. Let $\Phi:Y\to X$ be continuous and let $\omega:Y\to\F$ be bounded and such that $W_{\Phi,\omega}$ is a linear homeomorphism from $\mathbf{F}$ onto $\mathbf{E}$. Then:
\item[(i)] $\Phi\left(Y\right)$ is closed in $X$ and $\Phi$ is a topological embedding.
\item[(ii)] In the case when $X$ is connected and $\Phi$ is an open map, then $\Phi$ is a homeomorphism.
\item[(iii)] In the case when $X$ and $Y$ are topological manifolds with $\dim X\le \dim Y$, then $\Phi$ is a homeomorphism.
\end{proposition}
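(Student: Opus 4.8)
The plan is to exploit the fact that $\Phi$ is "essentially recovered" from the weak* continuous map $\lambda=\kappa_{W_{\Phi,\omega}}=\omega\cdot(\kappa_{\mathbf F}\circ\Phi):Y\to\mathbf F'$, and that the homeomorphism $W_{\Phi,\omega}$ lets us transfer the growth hypothesis on $|\kappa|_{\mathbf E}$ to $\mathbf F$ via $\Phi$. First I would record the preliminary reductions: since $\mathbf E$ is $2$-independent and $W_{\Phi,\omega}\in\Lo(\mathbf F,\mathbf E)$ is a homeomorphism (in particular has dense image), Corollary \ref{winj2} gives that $\Phi$ is injective and $\omega$ is non-vanishing; so $\Phi$ is a continuous injection and it only remains to understand when it is a topological embedding with closed image (and then a homeomorphism). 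Because $W_{\Phi,\omega}$ is bounded below, there is $c>0$ with $\|W_{\Phi,\omega}f\|_{\mathbf E}\ge c\|f\|_{\mathbf F}$ for all $f\in\mathbf F$; dualizing, $W_{\Phi,\omega}^{*}$ maps the unit ball of $\mathbf E^{*}$ onto a set containing $c$ times the unit ball of $\mathbf F^{*}$, and since $W_{\Phi,\omega}^{*}y_{\mathbf E}=\omega(y)\Phi(y)_{\mathbf F}$, evaluating norms gives the two-sided comparison
\[
c\,|\kappa|_{\mathbf F}(\Phi(y))\le |\omega(y)|\,|\kappa|_{\mathbf F}(\Phi(y))=\|W_{\Phi,\omega}^{*}y_{\mathbf E}\|\le \|W_{\Phi,\omega}\|\,|\kappa|_{\mathbf E}(y),
\]
while boundedness of $\omega$ by some $M$ gives $|\kappa|_{\mathbf E}(y)\le M\,c^{-1}\|W_{\Phi,\omega}^{-1}\|\cdot|\omega(y)|^{-1}\cdot(\text{something})$ — more cleanly, from $\|f\|_{\mathbf F}\le c^{-1}\|W_{\Phi,\omega}f\|_{\mathbf E}$ and $[W_{\Phi,\omega}f](y)=\omega(y)f(\Phi(y))$ one gets, testing against $f$ with $\|f\|_{\mathbf F}=1$ and $|f(\Phi(y))|$ near $|\kappa|_{\mathbf F}(\Phi(y))$, the inequality $|\kappa|_{\mathbf E}(y)\ge c\,M^{-1}|\kappa|_{\mathbf F}(\Phi(y))$ when $\omega(y)\ne 0$. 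The upshot I want is: $|\kappa|_{\mathbf F}(\Phi(y))$ is comparable to $|\kappa|_{\mathbf E}(y)$ up to the bounded, non-vanishing factor $\omega$, hence $|\kappa|_{\mathbf F}(\Phi(y))\to+\infty$ as $y\to\infty$ in $Y$.

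For (i), I would argue that $\Phi$ is a \emph{proper} map onto its image, which for a continuous injection between locally compact Hausdorff spaces is exactly what is needed for it to be a topological embedding with closed image. Suppose $y_\alpha\to\infty$ in $Y$; I claim $\Phi(y_\alpha)\to\infty$ in $X$. If not, after passing to a subnet $\Phi(y_\alpha)\to x_0\in X$, so $\Phi(y_\alpha)_{\mathbf F}\to (x_0)_{\mathbf F}$ weak*, and since $\mathbf F$ is a NSCF $|\kappa|_{\mathbf F}$ is bounded on compact neighborhoods of $x_0$, contradicting $|\kappa|_{\mathbf F}(\Phi(y_\alpha))\to+\infty$ established above. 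Thus $\Phi$ is proper: the preimage of a compact $K\subset X$ is a closed subset of $Y$ on which $y\mapsto\Phi(y)$ stays in $K$, hence cannot escape to $\infty$, hence is compact. A proper continuous injection is a closed map, so $\Phi(Y)$ is closed and $\Phi:Y\to\Phi(Y)$ is a closed continuous bijection, i.e.\ a homeomorphism onto its image. This proves (i).

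For (ii), once $\Phi(Y)$ is closed and $\Phi$ is an open map, $\Phi(Y)$ is also open in $X$; since $X$ is connected and $\Phi(Y)$ is nonempty, clopen, we get $\Phi(Y)=X$, and combined with (i) that $\Phi$ is a homeomorphism of $Y$ onto $X$. For (iii), I would invoke \emph{invariance of domain}: $\Phi:Y\to X$ is a continuous injection between manifolds and $\dim X\le\dim Y$; if $\dim X<\dim Y$ one reaches a contradiction with injectivity (an open subset of $\R^{\dim Y}$ cannot inject continuously into $\R^{\dim X}$ — here one restricts to a chart and uses that $\Phi$ is a closed embedding by (i), so its image would be a closed $\dim Y$-dimensional subset sitting in a lower-dimensional manifold, impossible), so $\dim X=\dim Y$, and then invariance of domain makes $\Phi$ an open map, reducing (iii) to (ii) (connectedness of $X$ is part of the standing convention that manifolds are connected).

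The main obstacle I anticipate is making the norm-comparison in the first paragraph fully rigorous in both directions — in particular extracting the lower bound $|\kappa|_{\mathbf E}(y)\gtrsim |\kappa|_{\mathbf F}(\Phi(y))$ and hence $|\kappa|_{\mathbf F}(\Phi(y))\to+\infty$; one must be careful that the "test function nearly attaining $|\kappa|_{\mathbf F}(\Phi(y))$" argument uses only $W_{\Phi,\omega}$ being bounded below and $\omega$ bounded, not any attainment of norms. Everything after that — properness, closedness, the clopen argument, invariance of domain — is standard point-set and algebraic topology and should go through smoothly.
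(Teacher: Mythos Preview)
Your overall plan is exactly the paper's: use Corollary~\ref{winj2} to get that $\Phi$ is injective and $\omega$ non-vanishing, show $|\kappa|_{\mathbf F}(\Phi(y))\to+\infty$ as $y\to\infty$, deduce that $\Phi$ is proper/closed and hence a closed embedding, then get (ii) by the clopen argument and (iii) from (ii) via invariance of domain. Parts (ii), (iii), and the properness-to-embedding step are fine and match the paper essentially verbatim.

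The one genuine gap is the norm comparison, and it is precisely the point you flagged, but your inequalities are pointing the wrong way. You write $|\kappa|_{\mathbf E}(y)\gtrsim |\kappa|_{\mathbf F}(\Phi(y))$ and then claim this yields $|\kappa|_{\mathbf F}(\Phi(y))\to+\infty$; it does not. What you need is the \emph{opposite} inequality $|\kappa|_{\mathbf F}(\Phi(y))\gtrsim |\kappa|_{\mathbf E}(y)$. Your displayed chain also starts with $c\,|\kappa|_{\mathbf F}(\Phi(y))\le |\omega(y)|\,|\kappa|_{\mathbf F}(\Phi(y))$, which would require $|\omega|\ge c$, an assumption you do not have; and the ``test function'' argument you sketch produces at best $|\kappa|_{\mathbf E}(y)\ge \|W_{\Phi,\omega}\|^{-1}|\omega(y)|\,|\kappa|_{\mathbf F}(\Phi(y))$, again useless without a lower bound on $|\omega|$.

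The clean fix --- and this is exactly what the paper does --- is to work on the dual side using the \emph{inverse} of $W_{\Phi,\omega}^{*}$, not $W_{\Phi,\omega}^{*}$ itself. Since $W_{\Phi,\omega}$ is a linear homeomorphism, $W_{\Phi,\omega}^{*}$ is boundedly invertible, so
\[
|\kappa|_{\mathbf E}(y)=\|y_{\mathbf E}\|\le \|W_{\Phi,\omega}^{*-1}\|\,\|W_{\Phi,\omega}^{*}y_{\mathbf E}\|=\|W_{\Phi,\omega}^{*-1}\|\,|\omega(y)|\,|\kappa|_{\mathbf F}(\Phi(y))\le \|W_{\Phi,\omega}^{*-1}\|\,M\,|\kappa|_{\mathbf F}(\Phi(y)),
\]
where $M=\sup_{y}|\omega(y)|$. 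This is the inequality in the right direction and uses exactly the two hypotheses you have available (invertibility and $\omega$ bounded above). With this line in place, $|\kappa|_{\mathbf E}(y)\to+\infty$ forces $|\kappa|_{\mathbf F}(\Phi(y))\to+\infty$, and the rest of your argument goes through unchanged.
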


Before proving the result consider a technical lemma.

\begin{lemma}
Let $\theta:X\to[0,+\8)$ be bounded on compacts. If $\Phi:Y\to X$ is continuous and such that $\lim\limits_{y\to\8}\theta\left(\Phi\left(y\right)\right)= +\8$, then $\Phi$ is a closed map.
\end{lemma}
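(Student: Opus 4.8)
The plan is to prove directly that $\Phi$ carries closed sets to closed sets, using that $X$, being locally compact and Hausdorff, is a $k$-space: a set $S\subseteq X$ is closed as soon as $S\cap K$ is closed (indeed, compact) for every compact $K\subseteq X$. (If $x\in\overline{S}$, take a compact neighborhood $K$ of $x$; the small neighborhoods of $x$ lie in $K$ and meet $S$, so $x\in\overline{S\cap K}$, whence $x\in S$ once $S\cap K$ is closed.) So I would fix a closed set $A\subseteq Y$ and a compact $K\subseteq X$, and it suffices to show that $\Phi(A)\cap K$ is compact.

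The key observation is that the hypothesis on $\theta$ forces $\Phi$ to be proper in the relevant sense. Since $\theta$ is bounded on compacts, choose $M>0$ with $\theta(x)\le M$ for all $x\in K$. Because $\lim\limits_{y\to\8}\theta(\Phi(y))=+\8$, there is a compact $L\subseteq Y$ with $\theta(\Phi(y))>M$ for all $y\in Y\backslash L$. Hence $y\notin L$ implies $\Phi(y)\notin K$, i.e. $\Phi^{-1}(K)\subseteq L$.

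Now $A\cap\Phi^{-1}(K)$ is closed in $Y$ (as $A$ is closed and $\Phi$ continuous) and is contained in the compact set $L$, so it is compact, and therefore so is its continuous image $\Phi(A\cap\Phi^{-1}(K))$. It remains to identify this image with $\Phi(A)\cap K$: one inclusion holds because $\Phi(\Phi^{-1}(K))\subseteq K$, and for the other, if $x=\Phi(y)$ with $y\in A$ and $x\in K$, then $y\in A\cap\Phi^{-1}(K)$, so $x$ lies in the image. Thus $\Phi(A)\cap K$ is compact, hence closed in the Hausdorff space $X$, and since $K$ was an arbitrary compact subset, $\Phi(A)$ is closed.

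I expect the only subtle step to be the very first reduction — recognizing that "$\theta\circ\Phi\to+\8$ at infinity" is exactly a statement that preimages of compacta are relatively compact, so that the lemma is an instance of the standard fact that proper continuous maps into locally compact Hausdorff spaces are closed. Everything after that is routine set-theoretic bookkeeping with preimages.
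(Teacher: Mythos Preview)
Your proof is correct and follows essentially the same route as the paper: both arguments first observe that the hypothesis forces $\Phi^{-1}(K)$ to be contained in a compact set for every compact $K\subset X$, and then deduce closedness from this properness together with local compactness of $X$. The only difference is presentational: the paper cites Engelking's characterization of perfect maps to conclude, whereas you unpack that citation by invoking the $k$-space property of $X$ directly and verifying $\Phi(A)\cap K$ is compact by hand.
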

\begin{proof}
It is easy to show that the conditions of the lemma imply that the preimage of every compact set in $X$ with respect to $\Phi$ is compact in $Y$. Since $X$ is locally compact, this condition is equivalent to the fact that $\Phi$ is a perfect map (see \cite[Theorem 3.7.18]{engelking}), and consequently, $\Phi$ is a closed map.
\end{proof}

\begin{proof}[Proof of Proposition \ref{nin}]
First, note that if $W_{\Phi,\omega}$ is bounded and invertible, then so is $W_{\Phi,\omega}^{*}$. Also, by Corollary \ref{winj2}, $\Phi$ is an injection. Hence, part (iii) follows from part (ii) and Invariance of Domain (see \cite[Theorem 2B.3]{hatcher}), and part (ii) follows from part (i). It is left to prove the latter.

Since $\mathbf{F}$ is a NSCF, $\left|\kappa\right|_{\mathbf{F}}$ is bounded on the compacts in $X$. For every $y\in Y$ we have that \begin{align*}
\left|\kappa\right|_{\mathbf{E}}\left(y\right)&\le \|W_{\Phi,\omega}^{*-1}\|\|W_{\Phi,\omega}^{*}\kappa_\mathbf{E}\left(y\right)\|\\
&=\|W_{\Phi,\omega}^{*-1}\|\left|\omega\left(y\right)\right|\left|\kappa\right|_\mathbf{F}\left(\Phi\left(y\right)\right)\le\|W_{\Phi,\omega}^{*-1}\|C\left|\kappa\right|_\mathbf{F}\left(\Phi\left(y\right)\right),
\end{align*} where $C=\sup\limits_{y\in Y}\left|\omega\left(y\right)\right|$. Therefore, $\lim\limits_{y\to\8}\left|\kappa\right|_\mathbf{F}\left(\Phi\left(y\right)\right)= +\8$. Hence, from the lemma above, we get that $\Phi$ is closed. Thus, $\Phi\left(Y\right)$ is closed in $X$ and $\Phi$ is a closed injective map, and so a topological embedding.
\end{proof}

In fact, the same proof works if we replace the assumptions that \linebreak $\lim\limits_{y\to\8}\left|\kappa\right|_\mathbf{E}\left(y\right)= +\8$ and $\omega$ is bounded with the assumption $\lim\limits_{y\to\8}\frac{\omega\left(y\right)}{\left|\kappa\right|_{\mathbf{E}}\left(y\right)}= 0$. However, none of the conditions of the proposition can be dropped completely, as the following series of examples shows.

\begin{example}
Consider the classical Hardy space (see e.g. \cite{am} or \cite{cm}) $H^{2}$, which is a Hilbert NSHF over $\D$. It is known that $C_{\Phi}$ is a continuous invertible operator on $H^{2}$, for any fractional-linear transformation $\Phi$ that maps $\D$ into itself (see \cite{cm}). Let $\Phi$ be any such transformation, which maps $X=[0,1)$ into itself with $\Phi\left(0\right)>0$. For example, let $\Phi\left(w\right)=\frac{w+2}{2w+1}$.

Let $\mathbf{F}$ be the restriction of $H^{2}$ on $X$. Since any function that vanishes on $X$ vanishes on $\D$, we get that the restriction operator is an isometric isomorphism from $H^{2}$ onto $\mathbf{F}$. Accordingly, $C_{\Phi}$ on $H^{2}$ is translated into $C_{\Phi\left|_{X}\right.}$ on $\mathbf{F}$, which is also a continuous invertible operator. Finally, $$\lim\limits_{x\to\8}\left|\kappa\right|_{\mathbf{F}}\left(x\right)=\lim\limits_{w\to 1}\left|\kappa\right|_{H^{2}}\left(w\right)=\lim\limits_{w\to 1}\frac{1}{1-\left|w\right|^{2}}= +\8.$$ However, $\Phi\left|_{X}\right.$ is not a surjection. Also, it is not an open map.

Consider another restriction of $H^{2}$. Define $\left\{a_{n}\right\}_{n=0}^{\infty}$ recursively by $a_{0}=0$ and $a_{n+1}=\Phi\left(a_{n}\right)$. Since $\Phi\left|_{X}\right.$ is a strictly increasing map without fixed points, we get that $\lim\limits_{n\to \8} a_{n} =1$. Then $\Phi^{2}$ maps $Y=\bigcup\limits_{n\ge 0}[a_{2n}, a_{2n+1}]$ into itself, and $\Phi^{2}\left|_{Y}\right.$ is an open map. Let $\mathbf{E}$ be the restriction of $H^{2}$ on $Y$. Then $C_{\Phi^{2}\left|_{Y}\right.}$ is a continuous invertible operator on $\mathbf{E}$ and $\lim\limits_{y\to\8}\left|\kappa\right|_{\mathbf{E}}\left(y\right)=\lim\limits_{w\to 1}\left|\kappa\right|_{H^{2}}\left(w\right)= +\8$. However, again $\Phi^{2}\left|_{Y}\right.$ is not a surjection, as $\Phi\left(Y\right)\cap [a_{0}, a_{1}]=\varnothing$.

For the last restriction, let $Z=\bigcup\limits_{n\ge 0}B_{n}$, where $B_{n}$ is an open ball with diameter $[a_{n}, a_{n+2}]$. Clearly, $Z$ is a connected open set in $\C$. Since $\Phi$ maps circles into circles, $[0,1)$ into itself, and preserves angles, it is easy to see that $\Phi\left(B_{n}\right)=B_{n+1}$. Hence, $\Phi\left(Z\right)\subset Z$ and $\Phi\left|_{Z}\right.$ is an open map. Let $\mathbf{H}$ be the restriction of $H^{2}$ on $Z$. Then  $C_{\Phi^{2}\left|_{Z}\right.}$ is a continuous invertible operator on $\mathbf{H}$. However, again $\Phi^{2}\left|_{Z}\right.$ is not a surjection, since $\Phi\left(Z\right)\cap B_{0}=\varnothing$. The condition of the proposition that fails this time is the one involving the norm of the point-evaluations. Namely, $\lim\limits_{z\to\8}\left|\kappa\right|_{\mathbf{H}}\left(z\right)=\lim\limits_{w\to \partial Z}\frac{1}{1-\left|w\right|^{2}}$ does not exist.
\qed\end{example}

\begin{remark}\label{bound}
In some cases continuity of $W_{\Phi,\omega}$ implies boundedness of $\omega$. Indeed, if $\left|\kappa\right|_{\mathbf{F}}$ is bounded from below, and $\left|\kappa\right|_{\mathbf{E}}$ -- from above, then $$\left|\omega\left(y\right)\right|\left|\kappa\right|_\mathbf{F}\left(\Phi\left(y\right)\right)=\|W_{\Phi,\omega}^{*}\kappa_\mathbf{E}\left(y\right)\|\le \|W_{\Phi,\omega}\|\left|\kappa\right|_{\mathbf{E}}\left(y\right),$$ and so $\left|\omega\left(y\right)\right|\le \frac{\inf\left|\kappa\right|_{\mathbf{E}}}{\sup\left|\kappa\right|_{\mathbf{F}}}$. Also, if $M_{\omega}$ is continuous on $\mathbf{F}$, then $\|\omega\|_{\8}\le \|M_{\omega}\|$.
\qed\end{remark}

\section{Recognition Problem}\label{recs}

Throughout this section $X$ and $Y$ are again locally compact and $\mathbf{F}\subset\Co\left(X\right)$, \linebreak$\mathbf{E}\subset\Co\left(Y\right)$ are linear subspaces.

The recognition of WCO$_{\Co}$'s is a much more subtle problem than the recognition of WCO's, as we will see later. Of course, given a continuous linear map $T$ between NSCF's $\mathbf{F}$ and $\mathbf{E}$, we can apply part (i) of Corollary \ref{cor} and ascertain if there are \textbf{any} $\Phi$ and $\omega$ such that $T=W_{\Phi,\omega}$. Hence, the problem of recognizing WCO$_{\Co}$'s is reduced to characterizing WCO's, which are WCO$_{\Co}$'s. The following proposition is the first step in that direction.

\begin{proposition}\label{recmc}
Let $\omega:Y\to\F$ and $\Phi:Y\to X$ be such that $W_{\Phi,\omega}\mathbf{F}\subset\Co\left(Y\right)$. Then:
\item[(i)] If $\mathbf{F}$ is $2$-independent and $\Phi$ is continuous, or if $1\in\mathbf{F}$, then $\omega$ is continuous.
\item[(ii)] If $\mathbf{F}$ generates the topology of $X$ and $\omega$ is continuous, then $\Phi$ is continuous outside of $\omega^{-1}\left(0\right)$.
\item[(iii)] If $\mathbf{F}$ is $2$-independent, the set $\omega^{-1}\left(0\right)$ is closed in $Y$, as well as $\Phi^{-1}\left(x\right)\cup \omega^{-1}\left(0\right)$, for every $x\in X$.
\end{proposition}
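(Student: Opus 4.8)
The engine of the whole proposition is the weak* continuous map $\lambda=\kappa_{W_{\Phi,\omega}}\colon Y\to\mathbf{F}'$ introduced in Section \ref{nw}: it exists precisely because $W_{\Phi,\omega}\mathbf{F}\subset\Co\left(Y\right)$, and it satisfies $\lambda\left(y\right)=\omega\left(y\right)\Phi\left(y\right)_{\mathbf{F}}$. I will also use throughout that $2$-independence of $\mathbf{F}$ forces $x_{\mathbf{F}}\ne 0$ for every $x\in X$, and that it makes $\kappa_{\mathbf{F}}$ injective. For part (i), if $1\in\mathbf{F}$ there is nothing to prove, since $\omega=W_{\Phi,\omega}1\in\Co\left(Y\right)$. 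If instead $\mathbf{F}$ is $2$-independent and $\Phi$ is continuous, I would argue pointwise: given $y_{0}\in Y$, pick $f\in\mathbf{F}$ with $f\left(\Phi\left(y_{0}\right)\right)=1$ (possible as $\Phi\left(y_{0}\right)_{\mathbf{F}}\ne 0$); then $f\circ\Phi$ is continuous, hence nonvanishing on an open neighbourhood $U$ of $y_{0}$, and on $U$ one has $\omega=\left(W_{\Phi,\omega}f\right)/\left(f\circ\Phi\right)$, a quotient of continuous functions with nonvanishing denominator. So $\omega$ is continuous at $y_{0}$, and this handles $\omega\left(y_{0}\right)=0$ and $\omega\left(y_{0}\right)\ne 0$ uniformly.

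For part (ii), since $\mathbf{F}$ generates the topology of $X$, Proposition \ref{topemb}(i) tells us $\kappa_{\mathbf{F}}$ is a topological embedding of $X$ into $\mathbf{F}'$ with the weak* topology; hence a map with values in $X$ is continuous at a point exactly when its composition with $\kappa_{\mathbf{F}}$ is weak* continuous there. So it suffices to show $y\mapsto\Phi\left(y\right)_{\mathbf{F}}$ is weak* continuous on $Z:=Y\backslash\omega^{-1}\left(0\right)$. On $Z$ we may write $\Phi\left(y\right)_{\mathbf{F}}=\omega\left(y\right)^{-1}\lambda\left(y\right)$, where $y\mapsto\omega\left(y\right)^{-1}$ is continuous (as $\omega$ is continuous and nonvanishing on $Z$) and $\lambda$ is weak* continuous; since scalar multiplication $\F\times\mathbf{F}'\to\mathbf{F}'$ is jointly weak* continuous, the product is weak* continuous on $Z$, whence $\Phi\left|_{Z}\right.$ is continuous.

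For part (iii), the point is to realize both sets as preimages under the weak* continuous $\lambda$ of weak* closed subsets of $\mathbf{F}'$. Since $\Phi\left(y\right)_{\mathbf{F}}\ne 0$ always, $\omega\left(y\right)=0$ is equivalent to $\lambda\left(y\right)=0$, so $\omega^{-1}\left(0\right)=\lambda^{-1}\left(\left\{0\right\}\right)$, and $\left\{0\right\}=\bigcap_{f\in\mathbf{F}}\left\{\phi:\left<f,\phi\right>=0\right\}$ is weak* closed. For the second set, fix $x\in X$; I claim $\Phi^{-1}\left(x\right)\cup\omega^{-1}\left(0\right)=\lambda^{-1}\left(\F x_{\mathbf{F}}\right)$. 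The inclusion $\subseteq$ is immediate; conversely, if $\lambda\left(y\right)=c\,x_{\mathbf{F}}$ and $\omega\left(y\right)\ne 0$, then $\Phi\left(y\right)_{\mathbf{F}}=\frac{c}{\omega\left(y\right)}x_{\mathbf{F}}$ is a nonzero scalar multiple of $x_{\mathbf{F}}$, which by $2$-independence is impossible unless $\Phi\left(y\right)=x$. Finally $\F x_{\mathbf{F}}$ is weak* closed: choosing $f_{0}\in\mathbf{F}$ with $f_{0}\left(x\right)=1$, one checks $\F x_{\mathbf{F}}=\bigcap_{f\in\mathbf{F}}\left\{\phi:\left<f-f\left(x\right)f_{0},\phi\right>=0\right\}$, an intersection of weak* closed hyperplanes. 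Hence $\lambda^{-1}\left(\F x_{\mathbf{F}}\right)$ is closed in $Y$. (Equivalently, unwinding $\lambda$, these two sets are $\bigcap_{f}\left(W_{\Phi,\omega}f\right)^{-1}\left(\left\{0\right\}\right)$ and $\bigcap_{f,g}\left\{y:g\left(x\right)\left[W_{\Phi,\omega}f\right]\left(y\right)=f\left(x\right)\left[W_{\Phi,\omega}g\right]\left(y\right)\right\}$, manifestly closed since $W_{\Phi,\omega}f\in\Co\left(Y\right)$.)

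None of these steps is technically deep once $\lambda$ and Proposition \ref{topemb} are available. The two points that genuinely need care are: in (ii), the equivalence between continuity of $\Phi$ at a point and weak* continuity of $\Phi\left(\cdot\right)_{\mathbf{F}}$ there, which uses that $\kappa_{\mathbf{F}}$ is a topological \emph{embedding}, not merely injective; and in (iii), the set identity $\Phi^{-1}\left(x\right)\cup\omega^{-1}\left(0\right)=\lambda^{-1}\left(\F x_{\mathbf{F}}\right)$, where $2$-independence is exactly the hypothesis that rules out a point $\Phi\left(y\right)\ne x$ with $\Phi\left(y\right)_{\mathbf{F}}$ proportional to $x_{\mathbf{F}}$. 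I expect the latter to be the most error-prone, since one must remember to discard the trivial case $\mathbf{F}=\left\{0\right\}$ so that $x_{\mathbf{F}}\ne 0$ is available.
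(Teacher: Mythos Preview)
Your argument is correct and follows essentially the same route as the paper's proof: the quotient trick for (i), the embedding $\kappa_{\mathbf{F}}$ from Proposition~\ref{topemb} together with $\frac{1}{\omega}\lambda$ for (ii), and the identifications $\omega^{-1}(0)=\lambda^{-1}(0_{\mathbf{F}'})$ and $\Phi^{-1}(x)\cup\omega^{-1}(0)=\lambda^{-1}(\F x_{\mathbf{F}})$ for (iii). You actually supply more detail than the paper in (iii), both in verifying the set identity and in exhibiting $\F x_{\mathbf{F}}$ explicitly as an intersection of weak* closed hyperplanes.
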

\begin{proof}
(i): If $1\in\mathbf{F}$, then $\omega=W_{\Phi,\omega}1\in \Co\left(Y\right)$. Consider the case when $\mathbf{F}$ is $1$-independent and $\Phi$ is continuous. Let $y\in Y$. Since $\Phi\left(y\right)_{\mathbf{F}}\ne 0$, there is $f\in \mathbf{F}$ such that $f\left(\Phi\left(y\right)\right)\ne 0$ and since $f$ and $\Phi$ are continuous, there is a neighborhood $U$ of $y$ such that $f\left(\Phi\left(z\right)\right)\ne 0$, for each $z\in U$. Since $W_{\Phi,\omega}f\in\Co\left(X\right)$ and $\omega=\frac{W_{\Phi,\omega}f}{f\circ\Phi}$ on $U$, we get that $\omega$ is continuous on $U$ as a ratio of continuous functions. Hence, $\omega$ is locally continuous, and thus continuous.

(ii): Since $\mathbf{F}$ generates the topology of $X$, from part (i) of Proposition \ref{topemb} $\kappa_{\mathbf{F}}$ is a topological embedding into the weak* topology on $\mathbf{F}'$. Hence, there is \linebreak$\kappa_{\mathbf{F}}^{-1}:X_{\mathbf{F}}\to X$, which is continuous from the weak* topology on $X_{\mathbf{F}}$ into $X$. Recall that $W_{\Phi,\omega}\mathbf{F}\subset\Co\left(Y\right)$ implies that the map $\lambda=\omega\cdot\kappa_{\mathbf{F}}\circ\Phi$ is weak* continuous from $Y$ into $\mathbf{F}'$. Hence, on $Y\backslash\omega^{-1}\left(0\right)$ the restriction $\Phi=\kappa^{-1}_{\mathbf{F}}\circ\left(\frac{1}{\omega}\cdot \lambda\right)$ is given through the operations with continuous maps, and so is continuous itself.

(iii): Since $\lambda$ is weak* continuous and $\mathbf{F}$ is $2$-independent, it follows that \linebreak$\omega^{-1}\left(0\right)=\lambda^{-1}\left(0_{\mathbf{F}'}\right)$ is closed. Similarly, the $\F$-line $\F x_{\mathbf{F}}$ is weak* closed in $\mathbf{F}'$, an so  $\Phi^{-1}\left(x\right)\cup \omega^{-1}\left(0\right)=\lambda^{-1}\left(\F x_{\mathbf{F}}\right)$ is closed in $Y$.
\end{proof}

The following example shows that we can hardly say anything about the behavior of $\Phi$ near the points where $\omega$ vanishes.

\begin{example}
Let $X=\left[-1,1\right]$, let $\mathbf{F}=\Co\left(X\right)$ which is a complete NSCF with respect to the supremum norm. Define $\omega:X\to\F$ and $\Phi:X\to X$ by $\omega\left(x\right)=x$, $\Phi\left(0\right)=0$ and $\Phi\left(x\right)=\sin\left(\frac{1}{x}\right)$, $x\ne 0$. Clearly, $\Phi$ has no limit at $0$. However, it is easy to see that $\|W_{\Phi,\omega}\|\le 1$ on $\mathbf{F}$. Similarly one can construct an example of a continuous WCO with discontinuous symbols on a compactly embedded NSCF.\qed\end{example}

The continuously induced CO and MO (CO$_{\Co}$ and MO$_{\Co}$) are the CO and MO with continuous symbols. It follows immediately from the preceding proposition, that every CO (MO, WCO with a non-vanishing multiplicative symbol) is a CO$_{\Co}$ (MO$_{\Co}$, WCO$_{\Co}$) on a wide class of NSCF's.

\begin{corollary}\label{recm}
Let $\mathbf{F}$ be a NSCF over $X$. Then:
\item[(i)] If $\mathbf{F}$ is $2$-independent, then any MO from $\mathbf{F}$ into another NSCF is a MO$_{\Co}$.
\item[(ii)] If $\mathbf{F}$ generates the topology of $X$, then any CO from $\mathbf{F}$ into into another NSCF is a CO$_{\Co}$.
\item[(iii)] If $1\in\mathbf{F}$ and $\mathbf{F}$ generates the topology of $X$, then any WCO from $\mathbf{F}$ with non-vanishing multiplicative symbol into into another NSCF is a WCO$_{\Co}$.
\end{corollary}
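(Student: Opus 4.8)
The plan is to obtain all three parts as immediate specializations of Proposition \ref{recmc}. In every case the operator in question is a WCO from $\mathbf{F}$ into some NSCF $\mathbf{E}\subset\Co\left(Y\right)$, so in particular $W_{\Phi,\omega}\mathbf{F}\subset\Co\left(Y\right)$ and Proposition \ref{recmc} applies; what changes from part to part is only which of its hypotheses are fulfilled by the (often trivial) data of the specialized symbols together with the standing hypothesis on $\mathbf{F}$.

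For (i) I would note that an MO is $W_{Id,\omega}$ with $X=Y$ and $\Phi=Id$ continuous; since $\mathbf{F}$ is $2$-independent, part (i) of Proposition \ref{recmc} gives continuity of $\omega$, so $M_\omega$ is a MO$_{\Co}$. For (ii) a CO is $W_{\Phi,1}$, whose weight $\omega\equiv 1$ is continuous and non-vanishing, so $\omega^{-1}\left(0\right)=\varnothing$; as $\mathbf{F}$ generates the topology of $X$, part (ii) of Proposition \ref{recmc} shows $\Phi$ is continuous on $Y\backslash\omega^{-1}\left(0\right)=Y$, i.e. $C_\Phi$ is a CO$_{\Co}$. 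For (iii), since $1\in\mathbf{F}$ part (i) of Proposition \ref{recmc} yields continuity of $\omega$; combining this with the assumption that $\omega$ is non-vanishing gives $\omega^{-1}\left(0\right)=\varnothing$, and then part (ii) (available because $\mathbf{F}$ generates the topology of $X$ and $\omega$ is now known to be continuous) forces $\Phi$ to be continuous on all of $Y$. Hence $W_{\Phi,\omega}$ is a WCO$_{\Co}$.

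There is no real obstacle: all the substance has already been absorbed into Proposition \ref{recmc}, and the corollary is pure bookkeeping. The only point deserving a moment's attention is in (iii), where one must observe that it is precisely the non-vanishing of $\omega$ that renders the ``outside of $\omega^{-1}\left(0\right)$'' clause of part (ii) vacuous, so that no residual discontinuity of $\Phi$ can survive; without that hypothesis one can only conclude continuity of $\Phi$ off the zero set of $\omega$, as the examples following Proposition \ref{recmc} illustrate.
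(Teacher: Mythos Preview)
Your proposal is correct and matches the paper's approach exactly: the paper simply states that the corollary follows immediately from Proposition \ref{recmc}, and your argument spells out precisely the three straightforward specializations needed.
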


The following example shows that we need the $2$-independence in (i).

\begin{example}
Let $X=\left[-1,1\right]$, let $\mathbf{F}=\left\{f\in\Co\left(X\right)\left| f\left(0\right)=0\right.\right\}$ with the supremum norm, which is a complete NSCF. Consider $\omega=\chi_{\left[-1,0\right]}-\chi_{\left[0,1\right]}$. This function does not have a limit at $0$, but it is easy to see that $M_{\omega}$ is bounded on $\mathbf{F}$.
\qed\end{example}

The following proposition shows how the recognition problem on the rescalings or restrictions of a NSCF is related to that problem on the original NSCF.

\begin{proposition}\label{resc}
Let $\mathbf{F}$ and $\mathbf{E}$ be NSCF's over locally compact spaces $X$ and $Y$ respectively, such that every continuous WCO from $\mathbf{F}$ into $\mathbf{E}$ with a (bounded) non-vanishing multiplicative symbol is a WCO$_{\Co}$. Let $\mathbf{H}$ be a (bounded) rescaling or restriction of $\mathbf{F}$. Then every continuous WCO from $\mathbf{H}$ into $\mathbf{E}$ with a (bounded) non-vanishing multiplicative symbol is a WCO$_{\Co}$.
\end{proposition}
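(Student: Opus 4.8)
The plan is to reduce the recognition problem on $\mathbf{H}$ to the one on $\mathbf{F}$ by factoring the WCO on $\mathbf{H}$ through the canonical isometry $M_{\upsilon}$ (for a rescaling $\mathbf{H}=\upsilon\mathbf{F}$) or $C_{\Psi}$ (for a restriction $\mathbf{H}=\mathbf{F}\circ\Psi$). Since $\upsilon:X\to\Fp$ and $\Psi:Z\to X$ are both continuous (by the definition of rescaling and restriction), these are WCO$_{\Co}$'s themselves, and their inverses act as WCO's in the obvious way; the point is to rewrite a given WCO on $\mathbf{H}$ as a WCO on $\mathbf{F}$, apply the hypothesis, and then transfer continuity back.

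First I would handle the rescaling case. Suppose $\mathbf{H}=\upsilon\mathbf{F}$ with $\upsilon:X\to\Fp$ continuous, and let $W_{\Phi,\omega}\in\Lo\left(\mathbf{H},\mathbf{E}\right)$ with $\omega$ non-vanishing (and bounded in the bounded case). By definition $M_{\upsilon}:\mathbf{F}\to\mathbf{H}$ is an isometric isomorphism, so $T=W_{\Phi,\omega}M_{\upsilon}\in\Lo\left(\mathbf{F},\mathbf{E}\right)$. Using the composition rule for WCO's from Section \ref{nw}, $W_{\Phi,\omega}M_{\upsilon}=W_{\Phi,\omega}W_{Id,\upsilon}=W_{\Phi,\,\omega\cdot\left(\upsilon\circ\Phi\right)}$. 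Wait — I must be careful about which space $\upsilon$ is a function on and in which order the composition goes: $M_{\upsilon}$ here is multiplication by $\upsilon$ acting $\mathbf{F}\to\mathbf{H}$, i.e. $M_\upsilon = W_{Id_X,\upsilon}$, and then $W_{\Phi,\omega}W_{Id_X,\upsilon} = W_{\Phi\circ Id,\;\omega\cdot(\upsilon\circ\Phi)} = W_{\Phi,\,\omega\cdot(\upsilon\circ\Phi)}$, using the identity $W_{\Psi,\upsilon'}W_{\Phi,\omega}=W_{\Phi\circ\Psi,\upsilon'\cdot\omega\circ\Psi}$ from the excerpt with the appropriate relabeling. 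The new multiplicative symbol $\omega\cdot\left(\upsilon\circ\Phi\right)$ is still non-vanishing since both factors are, so the hypothesis on $\mathbf{F},\mathbf{E}$ applies to $T$ and yields that $\Phi$ and $\omega\cdot\left(\upsilon\circ\Phi\right)$ are continuous. Continuity of $\Phi$ then gives continuity of $\upsilon\circ\Phi$ (as $\upsilon$ is continuous), hence of $\omega=\frac{\omega\cdot\left(\upsilon\circ\Phi\right)}{\upsilon\circ\Phi}$ as a ratio of continuous non-vanishing functions, so $W_{\Phi,\omega}$ is a WCO$_{\Co}$. In the bounded case one checks $\omega\cdot\left(\upsilon\circ\Phi\right)$ is bounded when $\omega$ is bounded and $\upsilon$ is bounded on $X$ (which is what "bounded rescaling" means), and conversely boundedness of $\omega$ is what we assumed; the bookkeeping is routine.

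For the restriction case, let $\Psi:Z\to X$ be a topological embedding, $\mathbf{H}=\mathbf{F}\circ\Psi$ the restriction, so $C_{\Psi}:\mathbf{F}\to\mathbf{H}$ induces an isometric isomorphism from $\mathbf{F}/\Ker C_{\Psi}$ onto $\mathbf{H}$. Given $W_{\Phi,\omega}\in\Lo\left(\mathbf{H},\mathbf{E}\right)$ with $\Phi:Y\to Z$ and $\omega:Y\to\Fp$, set $T=W_{\Phi,\omega}\overline{C_\Psi}\in\Lo\left(\mathbf{F}/\Ker C_\Psi,\mathbf{E}\right)$, or better, precompose with the quotient to land on $\mathbf{F}$ itself: $T=W_{\Phi,\omega}C_{\Psi}\in\Lo\left(\mathbf{F},\mathbf{E}\right)$, and by the composition rule $W_{\Phi,\omega}C_{\Psi}=W_{\Psi\circ\Phi,\,\omega}$. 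The multiplicative symbol is unchanged, hence still non-vanishing (and bounded if $\omega$ is), so the hypothesis gives that $\Psi\circ\Phi:Y\to X$ and $\omega$ are continuous. Continuity of $\omega$ is already what we need on that front; for $\Phi$, since $\Psi$ is a topological embedding it has a continuous inverse on its image $\Psi(Z)\supset(\Psi\circ\Phi)(Y)$, so $\Phi=\Psi^{-1}\circ(\Psi\circ\Phi)$ is continuous. Thus $W_{\Phi,\omega}$ is a WCO$_{\Co}$ on $\mathbf{H}$.

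The main obstacle — really the only one requiring care — is bookkeeping the WCO composition identity in the right order and being sure the intermediate operators genuinely land in $\Lo$ of the right spaces (in particular that $M_\upsilon$ and $C_\Psi$ are the isometries asserted in Section \ref{nw}, and that $T$ really maps into $\mathbf{E}$, which is immediate since $W_{\Phi,\omega}$ maps $\mathbf{H}$ into $\mathbf{E}$ and $M_\upsilon\mathbf{F}=\mathbf{H}$, resp. $C_\Psi\mathbf{F}=\mathbf{H}$). The "bounded" parenthetical requires only the elementary remark that a bounded rescaling uses bounded $\upsilon$, so $\upsilon\circ\Phi$ is bounded and the adjusted weight stays bounded; no new idea is needed. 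Everything else is a direct application of the hypothesis on $\mathbf{F},\mathbf{E}$ together with continuity of $\upsilon$ and the embedding property of $\Psi$.
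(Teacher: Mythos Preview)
Your argument is correct and follows essentially the same route as the paper: precompose $W_{\Phi,\omega}$ with the canonical isometry $M_{\upsilon}$ (resp.\ the continuous surjection $C_{\Psi}$) to obtain a continuous WCO on $\mathbf{F}$, apply the hypothesis, and pull continuity of the symbols back using that $\upsilon$ is continuous and non-vanishing (resp.\ that $\Psi$ is a topological embedding). The paper in fact only writes out the rescaling case and declares the restriction case ``similar''; you have supplied that case explicitly, and your handling of the bounded parenthetical is accurate.
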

\begin{proof}
We will only consider the case when $\mathbf{H}=\upsilon\mathbf{F}$, for some continuous\linebreak $\upsilon:X\to\Fp$. The other cases are similar. Let $\Phi:Y\to X$ and $\omega:Y\to\Fp$ be such that $W_{\Phi,\omega}\in\Lo\left(\mathbf{H},\mathbf{E}\right)$. Since by definition $M_{\upsilon}$ is continuous from $\mathbf{F}$ to $\mathbf{H}$, we get that $W_{\Phi,\omega\cdot\upsilon\circ\Phi}=W_{\Phi,\omega}M_{\upsilon}\in\Lo\left(\mathbf{F},\mathbf{E}\right)$. From our assumption, it follows that $W_{\Phi,\omega\cdot\upsilon\circ\Phi}$ is a WCO$_{\Co}$, and so $\Phi$ is continuous, as well as $\omega\cdot\upsilon\circ\Phi$. Since $\upsilon$ is continuous, $\upsilon\circ\Phi$ is also continuous, and so is $\omega$. Thus, $W_{\Phi,\omega}$ is a WCO$_{\Co}$ from $\upsilon\mathbf{F}$ into $\mathbf{E}$.
\end{proof}

Despite the appeal of part (iii) of Corollary \ref{recm}, one can ask what happens if we don't know if the non-zero constant functions belong to our NSCF, or whether the latter generates the topology of the phase space. In fact, it can be very difficult to verify if a given collection of functions generates the topology of the phase space, especially if this collection is given implicitly. Also, some natural classes of functions do not contain constants. For example, non-zero constant functions cannot vanish at infinity and are not integrable with respect to an unbounded measure. Thus, we find it important to investigate the situation without the assumption of having constant functions in our function spaces. Consider the following examples of unitary WCO's on a compactly embedded Hilbert space of continuous functions, which fail to be WCO$_{\Co}$'s.

\begin{example}\label{halfinterval}
Let $X=\left[0,2\pi\right)$. We will first construct a unitary CO on a compactly embedded Hilbert space of continuous functions over $X$, which fails to be a CO$_{\Co}$. Then we will modify this construction to obtain similar counterexamples for WCO's on ``even nicer'' NSCF's over $X$.

Let $K:X\times X\to\C$ be defined by $$K\left(x,y\right)=\frac{1}{1-\frac{1}{4}e^{i\left(x-y\right)}}=\frac{1}{1-\Psi\left(x\right)\overline{\Psi\left(x\right)}},$$ where $\Psi:\left[0,2\pi\right)\to\D$ is defined by $\Psi\left(x\right)=\frac{e^{ix}}{2}$. Clearly, $K$ is a positive definite Nevanlinna-Pick complete kernel on $X$ (see \cite[Theorem 8.2]{am}). Hence, there is a Reproducing Kernel Hilbert Space $\mathbf{F}$ on $X$, whose kernel is equal to $K$. Moreover, since $K$ is continuous, $\mathbf{F}$ is a compactly embedded NSCF (see e.g. \cite{fm}). Positive definiteness implies $2$-independence. Note however, that $\mathbf{F}$ is not a restriction of the Hardy space, since $\Psi$ is not a topological embedding.

Define $\Phi:X\to X$ by $\Phi\left(x\right)=x+\pi\mod2\pi\Z$. This map does not have a limit at $x=\pi$. Nevertheless, $K\left(\Phi\left(x\right),\Phi\left(y\right)\right)=K\left(x,y\right)$, and so $C_{\Phi}$ is a unitary operator on $\mathbf{F}$ (it is an injection due to part (i) of Proposition \ref{winj}, and is a co-isometry due to a result in \cite{nz}). This pathology is only possible because $\mathbf{F}$ does not generate the topology of $X$, which seems to be difficult to verify directly.

For an arbitrary continuous $\upsilon:X\to\Cp$ consider a rescaling $\mathbf{H}$ of $\mathbf{F}$, given via the kernel $L:X\times X\to\C$ defined by $L\left(x,y\right)=\upsilon\left(x\right)\overline{\upsilon\left(y\right)}K\left(x,y\right)$. This space also satisfies the nice properties that we have established for the space $\mathbf{F}$. Also note that $L\left(x,y\right)-\upsilon\left(x\right)\overline{\upsilon\left(y\right)}$ is positive semi-definite, and so $\upsilon\in\mathbf{H}$.

Denote $\omega=\frac{\upsilon}{\upsilon\circ\Phi}$, which is a scalar-valued function on $X$ with a discontinuity at $\pi$. Also, if $\upsilon$ is bounded both from above and from below, then so is $\omega$. Again, $\omega\left(x\right)\overline{\omega\left(y\right)}L\left(\Phi\left(x\right),\Phi\left(y\right)\right)=L\left(x,y\right)$, and so $W_{\Phi,\omega}$ is a unitary operator on $\mathbf{H}$. Now we can pick $\upsilon$ so that $\mathbf{H}$ has certain additional properties. In particular, if $\upsilon$ is defined by $\upsilon\left(x\right)=1-\frac{1}{4}e^{ix}$, then $1\equiv L\left(x,0\right)\in\mathbf{H}$ (but $\mathbf{H}$ does not generate the topology of $X$). Also, for $\upsilon\left(x\right)=x+\pi$, since $\upsilon\in\mathbf{H}$, from the discussion in the beginning of Section \ref{bd}, $\mathbf{H}$ generates the topology of $X\subset\R$ (but does not contain function $1$).

The nature of the phenomenon above can be informally explained in the following way: $X_{\mathbf{H}}$ is a semi-closed curve, whose open and closed ends are aligned, and so it can freely revolve inside the cone which it generates, but this revolution corresponds to a cyclic shift of its points, which is a discontinuous transformation.
\qed\end{example}

Recall that in the light of Remark \ref{wcoco}, WCO's with non-vanishing multiplicative symbol can be viewed as CO's. Hence, from part (ii) of Corollary \ref{recm} we can conclude that every such WCO on a NSCF $\mathbf{F}$ over a locally compact space $X$ is a WCO$_{\Co}$, once $\widetilde{\mathbf{F}}$ generates the topology of $\Fp\times X$. Hence, we have to look for sufficient condition of the latter, or some similar topological properties. In order to accomplish this task we need the following general result (see the proof in Section \ref{tes}).

\begin{theorem}\label{te}
Let $E$ be a topological vector space, with the operation of scalar multiplication given by $\mu:\F\times E\to E$, and let $K\subset E$ contain no pairs of linearly dependent elements. Then:
\item[(i)] If $K$ is compact, then $\mu\left|_{\Fp\times K}\right.$ is a topological embedding and $\mu\left|_{\F\times K}\right.$ is a quotient map onto its image $\F K$.
\item[(ii)] Let $B\subset \Fp$ be bounded. If $K\cup\left\{0_{E}\right\}$ is compact, then $\mu\left|_{B\times K}\right.$ is a topological embedding.
\end{theorem}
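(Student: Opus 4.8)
The plan is to prove the two parts in order, using (i) as a stepping stone towards (ii). For (i), the map $\mu|_{\Fp\times K}$ is a continuous bijection onto $\Fp K$ (injectivity uses precisely the ``no linearly dependent pairs'' hypothesis: if $a x = b y$ with $a,b\in\Fp$ and $x,y\in K$, then $x,y$ are linearly dependent, hence $x=y$ and then $a=b$). To show it is a topological embedding, I must construct a continuous inverse. The key point is to recover the scalar: given $z = a x$ with $x\in K$ and $a\in\Fp$, I would like a continuous functional-type device that reads off $a$. Since $K$ is compact in the Hausdorff TVS $E$, and $0_E\notin K$ (no element is linearly dependent with itself would fail, but actually $0_E$ is linearly dependent with everything, so $0_E\notin K$), I can separate $0_E$ from $K$ by a continuous seminorm or, more robustly, argue via nets: if $a_\alpha x_\alpha \to a x$ in $\Fp K$ with all points in $K$ compact, pass to a subnet with $x_\alpha \to x'\in K$; I must then show $a_\alpha \to a$ and $x' = x$. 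The subtlety is that $a_\alpha$ could a priori escape to $0$ or $\infty$; ruling out $a_\alpha\to 0$ uses that $a_\alpha x_\alpha \to ax \neq 0_E$ (so the product cannot degenerate — here I need that $0_E$ has a neighborhood basis and $K$ is bounded away from $0_E$ in the appropriate sense, which compactness of $K$ with $0_E\notin K$ provides), and ruling out $|a_\alpha|\to\infty$ uses boundedness of $K$ together with the fact that $a_\alpha x_\alpha$ converges hence is bounded. Once $a_\alpha$ is confined to a compact subset of $\Fp$, extract a further subnet $a_\alpha\to a'\in\Fp$; then $a_\alpha x_\alpha \to a' x'$, so $a' x' = a x$, forcing $x'=x$ and $a'=a$ by injectivity. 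Since every subnet has a sub-subnet converging to $(a,x)$, the original net converges, giving continuity of the inverse.

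For the quotient-map claim in (i), $\mu|_{\F\times K}$ is continuous and surjective onto $\F K = \Fp K \cup \{0_E\}$; I would show a set $U\subset \F K$ is open whenever its preimage is open. The only delicate point is behavior at $0_E$: a set whose preimage contains $\{0\}\times K$ as an interior-adjacent portion must contain a TVS-neighborhood of $0_E$ intersected with $\F K$, which follows because $\{0\}\times K$ is compact and the preimage being open lets me find, for each $x\in K$, a product neighborhood $B_\F(r_x)\times V_x$ inside it; compactness of $K$ gives a finite subcover, a uniform radius $r$, and the fact that on $\Fp\times K$ we already have an embedding lets me translate this into an honest neighborhood of $0_E$ in $\F K$. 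Alternatively, and perhaps more cleanly, I would invoke that a continuous surjection from a space that is a union of a compact set $\{0\}\times K$ and the ``nice'' part $\Fp\times K$, whose restriction off the compact part is an embedding, is automatically a quotient map — but I should double-check this does not secretly require local compactness of the target.

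For part (ii), now $K\cup\{0_E\}$ is compact and $B\subset\Fp$ is bounded; I want $\mu|_{B\times K}$ to be a topological embedding. Injectivity is as before. Continuity is automatic. For the inverse, I can largely reuse the net argument from (i): if $a_\alpha x_\alpha \to a x$ with $a_\alpha\in B$, $x_\alpha\in K$, then boundedness of $B$ immediately caps $|a_\alpha|$, and compactness of $K\cup\{0_E\}$ lets me extract $x_\alpha\to x'\in K\cup\{0_E\}$ and $a_\alpha\to a'\in\overline{B}\subset\F$. The new feature is that $x'$ might be $0_E$ or $a'$ might be $0$ — but if $a' x' = a x \neq 0_E$, then neither $a'=0$ nor $x'=0_E$, so $a'\in\Fp$ and $x'\in K$, and injectivity closes the argument exactly as before; here the compactified hypothesis on $K$ is doing real work, since without adjoining $0_E$ the limit $x'$ need not lie in $K$. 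If $B$ is not closed, note $a'\in\overline B$ but we only need $a'\in\Fp$, which holds since $a'x'\neq 0_E$; actually we need $a'\in B$ — no, we need $(a',x')\in B\times K$ to apply injectivity, but we know $a' = a\in B$ from $a'x'=ax$, $x'=x$, and injectivity of $\mu|_{\Fp\times K}$, so this is fine.

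The main obstacle I anticipate is the ``scalars stay away from $0$ and $\infty$'' step in the inverse-continuity argument: translating the purely algebraic condition (no linearly dependent pairs, so $0_E\notin K$) plus compactness into a genuine \emph{quantitative} separation of $K$ from $0_E$ in the TVS topology, valid even when $E$ is not locally convex and carries no norm. Compactness of $K$ with $0_E\notin K$ does give an open neighborhood $W$ of $0_E$ with $W\cap K=\varnothing$, and then balanced-ness of a sub-neighborhood controls $a_\alpha x_\alpha$ for small $|a_\alpha|$; symmetrically, boundedness of $K$ (in part (i)) or of $B$ (in part (ii)) controls the large-scalar regime. Assembling these into a clean net-convergence proof, while keeping track of which subnet is which, is the technical heart of the matter, and is presumably why the authors devote a separate section to it.
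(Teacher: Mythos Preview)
Your net-based approach is correct and yields the theorem, but it is genuinely different from the route the paper takes. The paper does not argue via convergence of nets at all. Instead, it first isolates a closedness lemma (Proposition~\ref{te2}): for suitable closed $A\subset\F$ and $K\subset E$, the set $AK$ is closed in $E$. With this in hand, the paper shows that $\mu|_{\Fp\times K}$ is an \emph{open} map onto its image by computing, for a basic open box $V\times U\subset\Fp\times K$, that
\[
\mu(V\times U)=\F K\setminus\bigl[\F(K\setminus U)\cup(\F\setminus V)K\bigr],
\]
and each piece of the complement is closed by Proposition~\ref{te2}. The quotient-map assertion is then handled by the same closedness machinery together with the uniform-radius compactness argument you also identified; the paper in fact proves a sharper statement, characterizing exactly when $\mu|_{\F\times K}$ is a quotient map (equivalently, closed) in terms of countable compactness of $K$. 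Part~(ii) is obtained from an analogous theorem with $K$ replaced by $\overline{K}=K\cup\{0_E\}$.

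What each approach buys: your subnet argument is direct and self-contained, with the balanced-neighborhood trick (separating $K$ from $0_E$) appearing only to control the scalar factor along a net. The paper's approach front-loads that same trick into a reusable lemma about closedness of products $AK$, which then makes the embedding and quotient statements almost formal and simultaneously delivers the finer equivalence with countable compactness; this is presumably why the authors chose it. Your sketch of the quotient step (``translate $B_\F(r)\times K\subset W$ into an honest neighborhood of $0_E$ in $\F K$'') is the one place that needs a line of justification: one must check that $rV_0\cap\F K\subset B_\F(r)K$ for a balanced neighborhood $V_0$ of $0_E$ disjoint from $K$, which follows exactly as in your ``scalars stay away from $0$'' step.
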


The preceding theorem is a general fact and we will not use it directly. Instead, we will employ the following consequence of that result.

\begin{proposition}\label{te1}
Let $X$ and $Y$ be locally compact spaces and let $E$ be a topological vector space. Let $\kappa:X\to E\backslash\left\{0_{E}\right\}$ be a topological embedding, let $\lambda:Y\to E$ be continuous, and let $\Phi:Y\to X$ and $\omega:Y\to\F$ be such that $\lambda\left(y\right)=\omega\left(y\right)\kappa\left(\Phi\left(y\right)\right)$, for every $y\in Y$. Then:
\item[(i)] If $\overline{\kappa\left(X\right)}$ is compact and contains no pairs of linearly dependent elements, then $\omega$ is continuous and $\Phi$ is continuous outside of $\omega^{-1}\left(0\right)$.
\item[(ii)] If $\overline{\kappa\left(X\right)}$ is compact, $\overline{\kappa\left(X\right)}\backslash\left\{0_{E}\right\}$ contains no pairs of linearly dependent elements and $\omega$ is bounded, then $\omega$ and $\Phi$ are continuous outside of $\omega^{-1}\left(0\right)$.
\end{proposition}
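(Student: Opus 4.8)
The plan is to deduce Proposition \ref{te1} directly from Theorem \ref{te}, exploiting the factorization $\lambda=\mu\circ(\omega\times(\kappa\circ\Phi))$ together with the hypothesis that $\kappa$ is a topological embedding. Set $K=\overline{\kappa(X)}$ (case (i)) or note that in case (ii) $K\setminus\{0_E\}$ is what carries the linear-independence condition. First I would observe that since $\kappa$ is a topological embedding onto $\kappa(X)\subset E\setminus\{0_E\}$, it suffices to prove continuity of the $E$-valued map $y\mapsto\kappa(\Phi(y))$ on $Y\setminus\omega^{-1}(0)$ and continuity of $\omega$: indeed $\Phi=\kappa^{-1}\circ(\kappa\circ\Phi)$ on that set, and $\kappa^{-1}$ is continuous on $\kappa(X)$ by the embedding property. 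So the whole problem reduces to recovering $\omega$ and $\kappa\circ\Phi$ as continuous functions from the single continuous map $\lambda$.

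Next, for case (i): the hypothesis says $K=\overline{\kappa(X)}$ is compact and contains no pair of linearly dependent vectors (in particular $0_E\notin K$, since $0_E$ is linearly dependent on everything, so automatically $\kappa(X)$ stays away from $0_E$ and the restriction is consistent with $\kappa:X\to E\setminus\{0_E\}$). Apply part (i) of Theorem \ref{te} to this $K$: the map $\mu|_{\F\times K}$ is a quotient map onto $\F K$, and $\mu|_{\Fp\times K}$ is a topological embedding. Now $\lambda(y)=\mu(\omega(y),\kappa(\Phi(y)))\in\F K$ for all $y$, and on $Z:=Y\setminus\omega^{-1}(0)$ we have $\lambda(y)\in\Fp K$. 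Because $\mu|_{\Fp\times K}$ is a topological embedding, its inverse is continuous from $\Fp K$ onto $\Fp\times K$; composing the continuous $\lambda|_Z$ with this inverse gives that $y\mapsto(\omega(y),\kappa(\Phi(y)))$ is continuous on $Z$, hence both coordinates $\omega|_Z$ and $(\kappa\circ\Phi)|_Z$ are continuous, and then $\Phi|_Z$ is continuous by the first paragraph. It remains to show $\omega$ is continuous at the points of $\omega^{-1}(0)$ as well: since $\lambda$ is continuous, $\lambda(y_0)=0_E$ whenever $\omega(y_0)=0$; and for any net $y_\alpha\to y_0$, writing $\lambda(y_\alpha)=\omega(y_\alpha)\kappa(\Phi(y_\alpha))$ with $\kappa(\Phi(y_\alpha))\in K$ a set that is compact and bounded away from $0_E$ — here I would use that $K$ compact and $0_E\notin K$ forces $\inf_{v\in K}\|v\|>0$ in a local-base sense (more carefully: if $\omega(y_\alpha)\not\to 0$ there is a subnet with $|\omega(y_\alpha)|\ge\varepsilon$, and $\omega(y_\alpha)\kappa(\Phi(y_\alpha))$ would be bounded away from $0_E$, using compactness of $K$ to extract a convergent subnet of $\kappa(\Phi(y_\alpha))$ with nonzero limit $v\in K$, so $\lambda(y_\alpha)$ would have a subnet converging to something of the form $cv\ne 0_E$, contradicting $\lambda(y_\alpha)\to 0_E$). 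So $\omega$ is continuous everywhere.

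For case (ii): now $K=\overline{\kappa(X)}$ is compact but may contain $0_E$; the linear-independence assumption is only on $K\setminus\{0_E\}$, and additionally $\omega$ is assumed bounded, say $\omega(Y)\subset B$ for some bounded $B\subset\Fp$ together with possibly the value $0$ — but on $Z=Y\setminus\omega^{-1}(0)$ we have $\omega(Z)\subset B'$ for a bounded $B'\subset\Fp$. Set $K':=\kappa(X)$ (which lies in $E\setminus\{0_E\}$) — its closure is $K$, and $K\setminus\{0_E\}=\overline{\kappa(X)}\setminus\{0_E\}$ contains no linearly dependent pairs, while $K=(K\setminus\{0_E\})\cup\{0_E\}$ is compact, i.e. $(K\setminus\{0_E\})\cup\{0_E\}$ is compact. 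Thus $K\setminus\{0_E\}$ plays the role of "$K$" in part (ii) of Theorem \ref{te}: it has no linearly dependent pairs and its union with $\{0_E\}$ is compact. Apply Theorem \ref{te}(ii) with this $K\setminus\{0_E\}$ and the bounded set $B'\subset\Fp$: the map $\mu|_{B'\times(K\setminus\{0_E\})}$ is a topological embedding. Since on $Z$ we have $(\omega(y),\kappa(\Phi(y)))\in B'\times(K\setminus\{0_E\})$ and $\lambda|_Z$ takes values in the image, composing $\lambda|_Z$ with the continuous inverse of this embedding shows $y\mapsto(\omega(y),\kappa(\Phi(y)))$ is continuous on $Z$, whence $\omega|_Z$ and $\Phi|_Z$ are continuous. (In case (ii) one does not claim continuity of $\omega$ across $\omega^{-1}(0)$, which is consistent with the statement, which only asserts continuity of $\omega$ and $\Phi$ outside $\omega^{-1}(0)$.)

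The main obstacle I anticipate is twofold. First, one must be careful that the closure $\overline{\kappa(X)}$ is taken inside $E$ and that $\kappa(X)$ itself sits in $E\setminus\{0_E\}$, so that in case (i) the "no linearly dependent pairs" hypothesis indeed rules out $0_E$ and everything is consistent; in case (ii) the bookkeeping of which set ($K$ vs. $K\setminus\{0_E\}$) feeds into which part of Theorem \ref{te} needs attention. Second, and more delicate, is the continuity of $\omega$ at zeros of $\omega$ in case (i): Theorem \ref{te}(i) only gives a quotient map on $\F\times K$, not an embedding there, so I cannot read off continuity of the first coordinate directly at those points, and I must instead argue by a net/subnet compactness argument using that $K$ is compact and avoids $0_E$ — this is the one place where a small ad hoc argument (rather than a clean application of the cited theorem) is required, and it is where care with the topological-vector-space topology, as opposed to a normed topology, is needed (phrasing "bounded away from $0_E$" via neighbourhoods of $0_E$ rather than norms).
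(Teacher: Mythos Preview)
Your overall strategy coincides with the paper's: both reduce everything to Theorem~\ref{te} via the factorisation $\lambda=\mu\circ(\omega\times(\kappa\circ\Phi))$, and both use the embedding assertion of Theorem~\ref{te} (part~(i) on $\Fp\times K$, part~(ii) on $B\times K$) to recover $\omega$ and $\kappa\circ\Phi$ on $Z=Y\setminus\omega^{-1}(0)$, followed by $\Phi=\kappa^{-1}\circ(\kappa\circ\Phi)$. Your treatment of case~(ii) is essentially identical to the paper's.

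The one genuine difference is how you handle continuity of $\omega$ at points of $\omega^{-1}(0)$ in case~(i). You resort to an ad hoc net argument; the paper instead exploits precisely the quotient-map clause of Theorem~\ref{te}(i) that you set aside. Since $K$ contains no linearly dependent pairs, the map $p:\F K\to\F$ given by $p(ae)=a$ is well defined; its composition $p\circ\mu$ with the quotient map $\mu|_{\F\times K}$ is the first projection $\F\times K\to\F$, which is continuous, so by the universal property of quotient maps $p$ itself is continuous, and then $\omega=p\circ\lambda$ is continuous on all of $Y$ in one stroke. This is exactly why the theorem records the quotient-map statement and not only the $\Fp$-embedding. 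Your remark that ``Theorem~\ref{te}(i) only gives a quotient map on $\F\times K$, not an embedding there, so I cannot read off continuity of the first coordinate directly'' therefore undersells the quotient map: you \emph{can} read it off directly.

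As for your net argument, it is repairable but not quite as written. The step ``$\lambda(y_\alpha)$ would have a subnet converging to something of the form $cv\ne 0_E$'' tacitly assumes you can extract a convergent subnet of $\omega(y_\alpha)$, but the set $\{|a|\ge\varepsilon\}$ is not compact in $\F$. The clean fix is: along a subnet with $|\omega(y_\alpha)|\ge\varepsilon$, write $\kappa(\Phi(y_\alpha))=\omega(y_\alpha)^{-1}\lambda(y_\alpha)$; the scalars $\omega(y_\alpha)^{-1}$ are bounded by $1/\varepsilon$ and $\lambda(y_\alpha)\to 0_E$, so $\kappa(\Phi(y_\alpha))\to 0_E$, contradicting that it lies in the closed set $K$ with $0_E\notin K$. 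With this correction your argument goes through, but the paper's quotient-map route is both shorter and avoids nets altogether.
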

\begin{proof}
Let us first address the continuity of $\omega$ in (i). Since $K=\overline{\kappa\left(X\right)}$ contains no pairs of linearly dependent elements, there is a function $p:\F K\to \F$, such that $p\left(\mu\left(a,e\right)\right)=a$, for every $a\in \F$ and $e\in K$. Since from the preceding theorem $\mu\left|_{\F\times K}\right.$ is a quotient map onto its image $\F K$, while $p\circ\mu$ is obviously continuous, it follows that $p$ is continuous, and so $\omega=p\circ\lambda$ is also continuous.

Now without loss of generality we may assume that $\omega$ does not vanish (otherwise replace $Y$ with $Y\backslash \omega^{-1}\left(0\right)=Y\backslash \lambda^{-1}\left(0_{E}\right)$, which is an open subset of $Y$). Let \linebreak $K=\overline{\kappa\left(X\right)}\backslash\left\{0_{E}\right\}$ and $B=\omega\left(Y\right)$. It follows from the preceding theorem that if either of the conditions in (i) or (ii) is satisfied, then $\mu\left|_{B\times K}\right.$ is a topological embedding, and since $\kappa$ is a topological embedding, we get that the map $\mu':B\times K\to E$ defined by $\mu'\left(a,x\right)=a\kappa\left(x\right)=\mu\left(a,\kappa\left(x\right)\right)$ is also a topological embedding. Therefore, there are continuous   $p:\mu'\left(B\times X\right)\to B$ and $q:\mu'\left(B\times X\right)\to X$, such that $p\left(\mu\left(a,x\right)\right)=a$ and $q\left(\mu\left(a,x\right)\right)=x$, for every $a\in B$ and $x\in X$. Hence, $\omega=p\circ\lambda$ and $\Phi=q\circ\lambda$ are also continuous.
\end{proof}

Before proceeding to the main results, consider a lemma.

\begin{lemma}\label{van}
Assume that $X$ is not compact. Let $\mathbf{F}$ be a $2$-independent linear subspace of $\Co\left(X\right)$ and let $\upsilon:X\to\Fp$ be continuous. Define $\kappa=\upsilon\cdot\kappa_{\mathbf{F}}$. Then $\kappa$ is a topological embedding and the set $\kappa\left(X\right)\cup\left\{0_{\mathbf{F}'}\right\}$ is the one point compactification of $X$ if and only if $\upsilon\mathbf{F}\subset\Co_{0}\left(X\right)$.
\end{lemma}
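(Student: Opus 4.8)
The plan is to reduce both assertions on the left to a single continuity statement about the canonical extension of $\kappa$ to the one-point compactification $X^{+}=X\cup\left\{\8\right\}$ of $X$. Put $\widehat{\kappa}:X^{+}\to\mathbf{F}'$ with $\widehat{\kappa}|_{X}=\kappa$ and $\widehat{\kappa}\left(\8\right)=0_{\mathbf{F}'}$, the algebraic dual $\mathbf{F}'$ always carrying the weak* topology. First I would collect the structural facts: $\kappa=\upsilon\cdot\kappa_{\mathbf{F}}$ is weak* continuous, being the product of the continuous scalar function $\upsilon$ with the weak*-continuous map $\kappa_{\mathbf{F}}$; it is injective and never equal to $0_{\mathbf{F}'}$, since $2$-independence of $\mathbf{F}$ forces $x_{\mathbf{F}}\ne 0$ and $x_{\mathbf{F}},y_{\mathbf{F}}$ linearly independent for distinct $x,y$ (here $X$ has at least two points, being non-compact) while $\upsilon$ is non-vanishing; consequently $\widehat{\kappa}$ is an injection. (By part (i) of Proposition \ref{topemb}, $\kappa$ being a topological embedding amounts to $\upsilon\mathbf{F}$ generating the topology of $X$, but this will fall out of the argument.) I would then record the dictionary between the two sides: for $f\in\mathbf{F}$ the continuous function $\upsilon f$ satisfies $\left(\upsilon f\right)\left(x\right)=\left<f,\kappa\left(x\right)\right>$, and since a basic weak* neighbourhood of $0_{\mathbf{F}'}$ is cut out by finitely many elements of $\mathbf{F}$, the inclusion $\upsilon\mathbf{F}\subset\Co_{0}\left(X\right)$ is equivalent to $\lim\limits_{x\to\8}\kappa\left(x\right)=0_{\mathbf{F}'}$ in the weak* topology, which is in turn precisely continuity of $\widehat{\kappa}$ at $\8$.

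For the ``if'' direction, assume $\upsilon\mathbf{F}\subset\Co_{0}\left(X\right)$; then $\widehat{\kappa}$ is continuous at $\8$ by the dictionary, and it is continuous on the open set $X\subset X^{+}$ because $\kappa$ is, so $\widehat{\kappa}$ is continuous on all of $X^{+}$. Since $X^{+}$ is compact and the weak* topology on $\mathbf{F}'$ is Hausdorff, the continuous injection $\widehat{\kappa}$ is a topological embedding; restricting to $X$ shows that $\kappa$ is a topological embedding, and $\widehat{\kappa}$ exhibits $\kappa\left(X\right)\cup\left\{0_{\mathbf{F}'}\right\}=\widehat{\kappa}\left(X^{+}\right)$ as the one-point compactification of $X$.

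For the ``only if'' direction, the hypothesis says precisely that $\widehat{\kappa}$ is a homeomorphism of $X^{+}$ onto $\kappa\left(X\right)\cup\left\{0_{\mathbf{F}'}\right\}$ under the canonical identification $\8\mapsto 0_{\mathbf{F}'}$ (if one reads the hypothesis only up to abstract homeomorphism, uniqueness of the one-point compactification of the locally compact Hausdorff space $\kappa\left(X\right)\cong X$ gives the same conclusion). In particular $\widehat{\kappa}$ is continuous at $\8$, hence $\lim\limits_{x\to\8}\kappa\left(x\right)=0_{\mathbf{F}'}$, and the dictionary returns $\upsilon\mathbf{F}\subset\Co_{0}\left(X\right)$. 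The statement is essentially a repackaging, so there is no genuinely hard step; the two points to handle carefully are the two-way translation between ``$\upsilon\mathbf{F}\subset\Co_{0}\left(X\right)$'' and weak* convergence of $\kappa\left(x\right)$ to $0_{\mathbf{F}'}$, and the use of the fact that a continuous injection from a compact space into a Hausdorff space is an embedding — this both promotes $\widehat{\kappa}$ to a homeomorphism and delivers embedding-ness of $\kappa$ at no extra cost.
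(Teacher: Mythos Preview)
Your proof is correct and follows essentially the same route as the paper's: both extend $\kappa$ to the one-point compactification by $\8\mapsto 0_{\mathbf{F}'}$, translate $\upsilon\mathbf{F}\subset\Co_{0}\left(X\right)$ into weak* convergence $\kappa\left(x\right)\to 0_{\mathbf{F}'}$, and then invoke the fact that a continuous injection from a compact space into a Hausdorff space is an embedding. Your write-up is a bit more explicit about the injectivity of $\widehat{\kappa}$ and the ``only if'' direction, but the argument is the same.
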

\begin{proof}
From the definition of the one point compactification and weak* topology, it follows that $\upsilon\mathbf{F}\subset\Co_{0}\left(X\right)$ if and only if $\lim\limits_{x\to\8}\kappa\left(x\right)=0_{\mathbf{F}'}$ in the weak* topology, and so necessity follows. Let us now show sufficiency.

Since $\upsilon$ is continuous, it follows that $\kappa$ is a weak* continuous map into $\mathbf{F}'$. Let $\widehat{X}$ denote the one point compactification of $X$ with the ideal element $\8_{X}$. Extend $\kappa$ on $\widehat{X}$ by setting $\kappa\left(\8_{X}\right)=0_{\mathbf{F}'}$. Since $\mathbf{F}$ is $2$-independent and $\upsilon$ does not vanish, $\kappa$ is injection, while the condition $\upsilon\mathbf{F}\subset\Co_{0}\left(X\right)$ ensures that $\kappa$ is continuous at $\8_{X}$. Hence, $\kappa$ is a continuous injection from the compact space $\widehat{X}$ into $\mathbf{F}'$, and so a topological embedding.
\end{proof}

Combining the preceding lemma with Proposition \ref{te1} we obtain the following two theorems, which are the main results of this section.

\begin{theorem}\label{cwco}
Assume that $\mathbf{F}$ is $2$-independent. If $\omega:Y\to\F$ and $\Phi:Y\to X$ are such that $W_{\Phi,\omega}\mathbf{F}\subset\Co\left(Y\right)$, then:
\item[(i)] If $X$ is compact, then $\omega$ is continuous and $\Phi$ is continuous outside of $\omega^{-1}\left(0\right)$.
\item[(ii)] If $\mathbf{F}\subset\Co_{0}\left(X\right)$ and $\omega$ is bounded, then $\omega$ and $\Phi$ are continuous outside of $\omega^{-1}\left(0\right)$.
\end{theorem}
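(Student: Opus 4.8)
The plan is to obtain both statements as applications of Proposition \ref{te1}. Take $E=\mathbf{F}'$ with the weak* topology, $\kappa=\kappa_{\mathbf{F}}$, and $\lambda=\kappa_{W_{\Phi,\omega}}$, the weak* continuous map from $Y$ into $\mathbf{F}'$ generated by $W_{\Phi,\omega}:\mathbf{F}\to\Co\left(Y\right)$ as described in Section \ref{nw}; recall that $\lambda\left(y\right)=\omega\left(y\right)\kappa_{\mathbf{F}}\left(\Phi\left(y\right)\right)$, which is exactly the relation $\lambda\left(y\right)=\omega\left(y\right)\kappa\left(\Phi\left(y\right)\right)$ required there. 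Two features of a $2$-independent $\mathbf{F}$ will be used: every point evaluation $x_{\mathbf{F}}$ is nonzero (a set containing $0_{\mathbf{F}'}$ together with any other vector is linearly dependent), so $\kappa_{\mathbf{F}}$ maps $X$ into $\mathbf{F}'\backslash\left\{0_{\mathbf{F}'}\right\}$; and $\kappa_{\mathbf{F}}\left(X\right)$ contains no pair of linearly dependent elements, which is merely a reformulation of $2$-independence. Since $X$ and $Y$ are locally compact, the only hypotheses of Proposition \ref{te1} left to verify in each case are those concerning $\overline{\kappa\left(X\right)}$.

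For part (i), $X$ is compact and $2$-independence makes $\mathbf{F}$ separate points, so by part (ii) of Proposition \ref{topemb} the map $\kappa_{\mathbf{F}}$ is a topological embedding. Its image $\kappa_{\mathbf{F}}\left(X\right)$ is then compact, hence closed in the Hausdorff space $\mathbf{F}'$, so $\overline{\kappa_{\mathbf{F}}\left(X\right)}=\kappa_{\mathbf{F}}\left(X\right)$ is compact and free of linearly dependent pairs. This is condition (i) of Proposition \ref{te1}, and it gives that $\omega$ is continuous and $\Phi$ is continuous outside $\omega^{-1}\left(0\right)$.

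For part (ii), if $X$ is compact then $\Co\left(X\right)=\Co_{0}\left(X\right)$ and part (i) already applies, so assume $X$ is not compact. Apply Lemma \ref{van} with $\upsilon\equiv1$: since $\mathbf{F}=1\cdot\mathbf{F}\subset\Co_{0}\left(X\right)$, the lemma gives that $\kappa_{\mathbf{F}}$ is a topological embedding and that $\kappa_{\mathbf{F}}\left(X\right)\cup\left\{0_{\mathbf{F}'}\right\}$ is the one-point compactification of $X$, in particular compact. Being compact, this set is closed in $\mathbf{F}'$, so $\overline{\kappa_{\mathbf{F}}\left(X\right)}\subseteq\kappa_{\mathbf{F}}\left(X\right)\cup\left\{0_{\mathbf{F}'}\right\}$ is compact, while $\overline{\kappa_{\mathbf{F}}\left(X\right)}\backslash\left\{0_{\mathbf{F}'}\right\}\subseteq\kappa_{\mathbf{F}}\left(X\right)$ still has no linearly dependent pair. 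Together with the hypothesis that $\omega$ is bounded, this is condition (ii) of Proposition \ref{te1}, which gives that both $\omega$ and $\Phi$ are continuous outside $\omega^{-1}\left(0\right)$.

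I do not expect a genuine obstacle here: the substantive work lies in Theorem \ref{te}, Proposition \ref{te1}, and Lemma \ref{van}, all of which may be invoked. The only points requiring care are the bookkeeping translation of ``$2$-independent'' into ``$\kappa_{\mathbf{F}}\left(X\right)$ avoids $0_{\mathbf{F}'}$ and has no linearly dependent pair'', and the observation in part (ii) that one may safely replace $\kappa_{\mathbf{F}}\left(X\right)$ by its closure because the latter is trapped inside the compact set $\kappa_{\mathbf{F}}\left(X\right)\cup\left\{0_{\mathbf{F}'}\right\}$ furnished by Lemma \ref{van} — which is also why the compact-$X$ case must be peeled off first, as Lemma \ref{van} assumes $X$ noncompact.
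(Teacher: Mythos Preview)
Your proof is correct and follows essentially the same route as the paper: set up $\kappa_{\mathbf{F}}$ and $\lambda$, use $2$-independence plus Proposition~\ref{topemb}(ii) (compact case) or Lemma~\ref{van} (non-compact $\Co_{0}$ case) to verify the hypotheses of Proposition~\ref{te1}, and read off the conclusion. The only difference is that you spell out a few bookkeeping points (e.g., peeling off the compact-$X$ subcase in part~(ii), checking $\overline{\kappa_{\mathbf{F}}(X)}\subseteq\kappa_{\mathbf{F}}(X)\cup\{0_{\mathbf{F}'}\}$) that the paper leaves implicit.
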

\begin{proof}
If $\mathbf{F}$ is $2$-independent, then $\kappa_{\mathbf{F}}$ is an injection and $X_{\mathbf{F}}$ contains no pairs of linearly dependent elements. Furthermore, if $X$ is compact, then $X_{\mathbf{F}}$ is weak* compact and $\kappa_{\mathbf{F}}$ is a topological embedding, by virtue of part (ii) of Proposition \ref{topemb}. Else, if $X$ is not compact, but $\mathbf{F}\subset\Co_{0}\left(X\right)$, then by virtue of the preceding lemma, $X_{\mathbf{F}}\cup\left\{0_{\mathbf{F}'}\right\}$ is compact and $\kappa_{\mathbf{F}}$ is a topological embedding.

Recall that if $W_{\Phi,\omega}\mathbf{F}\subset\Co\left(Y\right)$, then the map $\lambda=\omega\cdot\kappa_{\mathbf{F}}\circ\Phi$ is weak* continuous. Thus, the required continuities follow from applying Proposition \ref{te1} to $\kappa_{\mathbf{F}}$ and $\lambda$.
\end{proof}

\begin{remark}
While it is clear that the condition $\mathbf{F}\subset\Co_{0}\left(X\right)$ in part (ii) is essential, one is tempted to ask if we need the boundedness of $\omega$. Consider $\mathbf{H}$ from Example \ref{halfinterval} with $\upsilon\left(x\right)=2\pi-x$. Then $\lim\limits_{x\to\8}\left|\kappa\right|_{\mathbf{H}}\left(x\right)=0$, and so $\mathbf{H}\subset\Co_{0}\left(X\right)$, but this NSCF admits a WCO, which fails to be a WCO$_{\Co}$.
\qed \end{remark}

\begin{theorem}\label{cwcon}
Let $\mathbf{F}$ and $\mathbf{E}$ be $2$-independent NSCF's over locally compact spaces $X$ and $Y$ respectively, such that $\left|\kappa\right|_{\mathbf{F}}$ and $\left|\kappa\right|_{\mathbf{E}}$ are continuous and $\frac{1}{\left|\kappa\right|_{\mathbf{F}}}\mathbf{F}\subset \Co_{0}\left(X\right)$. If $\omega:Y\to\F$ and $\Phi:Y\to X$ are such that $W_{\Phi,\omega}\in\Lo\left(\mathbf{F},\mathbf{E}\right)$, then $\omega$ and $\Phi$ are continuous outside of $\omega^{-1}\left(0\right)$.
\end{theorem}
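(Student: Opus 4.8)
The plan is to reduce the statement to Theorem~\ref{cwco}(ii) by replacing $\mathbf{F}$ and $\mathbf{E}$ with the rescalings normalized by the norms of their point evaluations. Since $|\kappa|_{\mathbf{F}}$ and $|\kappa|_{\mathbf{E}}$ are continuous and non-vanishing (the former being implicit in the standing hypothesis that $\frac{1}{|\kappa|_{\mathbf{F}}}\mathbf{F}$ makes sense, the latter following from $2$-independence of $\mathbf{E}$), the maps $\frac{1}{|\kappa|_{\mathbf{F}}}:X\to\Fp$ and $\frac{1}{|\kappa|_{\mathbf{E}}}:Y\to\Fp$ are legitimate rescaling weights, and $\mathbf{F}_{1}:=\frac{1}{|\kappa|_{\mathbf{F}}}\mathbf{F}$ is a $2$-independent linear subspace of $\Co(X)$ (rescaling preserves $2$-independence) with $\mathbf{F}_{1}\subset\Co_{0}(X)$ by hypothesis. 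Using the composition rule for WCO's one computes $M_{1/|\kappa|_{\mathbf{E}}}\,W_{\Phi,\omega}\,M_{|\kappa|_{\mathbf{F}}}=W_{\Phi,\widetilde{\omega}}$, where $\widetilde{\omega}(y)=\frac{\omega(y)\,|\kappa|_{\mathbf{F}}(\Phi(y))}{|\kappa|_{\mathbf{E}}(y)}$; since $M_{|\kappa|_{\mathbf{F}}}$ carries $\mathbf{F}_{1}$ onto $\mathbf{F}$, $W_{\Phi,\omega}$ carries $\mathbf{F}$ into $\mathbf{E}$, and $M_{1/|\kappa|_{\mathbf{E}}}$ carries $\mathbf{E}$ into $\Co(Y)$ (here continuity of $|\kappa|_{\mathbf{E}}$ enters), we obtain $W_{\Phi,\widetilde{\omega}}\mathbf{F}_{1}\subset\Co(Y)$. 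Note also that $\widetilde{\omega}^{-1}(0)=\omega^{-1}(0)$, because $|\kappa|_{\mathbf{F}}$ and $|\kappa|_{\mathbf{E}}$ do not vanish.

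The reason for passing to $\widetilde{\omega}$ is that, unlike $\omega$, it is \emph{bounded}. Indeed, by Proposition~\ref{rec} the adjoint satisfies $W_{\Phi,\omega}^{*}y_{\mathbf{E}}=\omega(y)\,\Phi(y)_{\mathbf{F}}$, so $|\omega(y)|\,|\kappa|_{\mathbf{F}}(\Phi(y))=\|W_{\Phi,\omega}^{*}y_{\mathbf{E}}\|\le\|W_{\Phi,\omega}\|\,\|y_{\mathbf{E}}\|=\|W_{\Phi,\omega}\|\,|\kappa|_{\mathbf{E}}(y)$, i.e. $|\widetilde{\omega}(y)|\le\|W_{\Phi,\omega}\|$ for every $y\in Y$. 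This is the only step that genuinely uses that $W_{\Phi,\omega}$ is a bounded operator rather than merely that $W_{\Phi,\omega}\mathbf{F}\subset\Co(Y)$, and it is exactly what the hypotheses on $\mathbf{E}$ (continuity of $|\kappa|_{\mathbf{E}}$, together with the normalization) are there to supply.

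With this in hand I would apply Theorem~\ref{cwco}(ii) to the $2$-independent space $\mathbf{F}_{1}\subset\Co_{0}(X)$, the bounded weight $\widetilde{\omega}$, the map $\Phi$, and the operator $W_{\Phi,\widetilde{\omega}}$ with $W_{\Phi,\widetilde{\omega}}\mathbf{F}_{1}\subset\Co(Y)$: it yields that $\widetilde{\omega}$ is continuous on $Y$ and that $\Phi$ is continuous on $Y\setminus\widetilde{\omega}^{-1}(0)=Y\setminus\omega^{-1}(0)$. It then remains to recover $\omega$ itself: from the definition of $\widetilde{\omega}$ we have $\omega(y)=\frac{\widetilde{\omega}(y)\,|\kappa|_{\mathbf{E}}(y)}{|\kappa|_{\mathbf{F}}(\Phi(y))}$, and on $Y\setminus\omega^{-1}(0)$ the numerator is continuous ($\widetilde{\omega}$ is continuous there and $|\kappa|_{\mathbf{E}}$ is continuous by hypothesis) while the denominator $|\kappa|_{\mathbf{F}}\circ\Phi$ is continuous and non-vanishing there (since $|\kappa|_{\mathbf{F}}$ is continuous and $\Phi$ is continuous off $\omega^{-1}(0)$); hence $\omega$ is continuous on $Y\setminus\omega^{-1}(0)$, as required.

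I expect no conceptual obstacle here: all the topological work has already been done inside Theorem~\ref{cwco}(ii) (hence in Proposition~\ref{te1} and Theorem~\ref{te}), and the only genuinely load-bearing observation is that dividing the target side by $|\kappa|_{\mathbf{E}}$ converts the a priori unbounded quantity $\omega(y)\,|\kappa|_{\mathbf{F}}(\Phi(y))$ into the bounded symbol $\widetilde{\omega}$, while dividing the source side by $|\kappa|_{\mathbf{F}}$ moves us into $\Co_{0}(X)$; the rest is routine bookkeeping about rescalings. An essentially equivalent route skips the explicit rescalings and feeds Proposition~\ref{te1}(ii) directly the topological embedding $\kappa=\frac{1}{|\kappa|_{\mathbf{F}}}\kappa_{\mathbf{F}}$ (whose image together with $0_{\mathbf{F}'}$ is compact, by Lemma~\ref{van} when $X$ is not compact and by Proposition~\ref{topemb}(ii) when $X$ is compact) together with the weak* continuous map $\lambda(y)=\frac{\omega(y)}{|\kappa|_{\mathbf{E}}(y)}\,\Phi(y)_{\mathbf{F}}=\widetilde{\omega}(y)\,\kappa(\Phi(y))$; the verification of the hypotheses of \ref{te1}(ii) is the same boundedness estimate as above, after which $\omega$ is recovered exactly as before.
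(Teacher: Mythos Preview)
Your proof is correct and essentially coincides with the paper's: your $\widetilde{\omega}$ is precisely the paper's auxiliary weight $\upsilon=\frac{\omega\,|\kappa|_{\mathbf{F}}\circ\Phi}{|\kappa|_{\mathbf{E}}}$, and the ``equivalent route'' you sketch at the end---feeding $\kappa=\frac{1}{|\kappa|_{\mathbf{F}}}\kappa_{\mathbf{F}}$ and $\lambda=\widetilde{\omega}\cdot\kappa\circ\Phi$ directly into Proposition~\ref{te1}(ii)---is exactly what the paper does, whereas your main route just repackages the same step through Theorem~\ref{cwco}(ii). One harmless slip: Theorem~\ref{cwco}(ii) only gives continuity of $\widetilde{\omega}$ \emph{outside} $\widetilde{\omega}^{-1}(0)$, not on all of $Y$, but you only use it there anyway.
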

\begin{proof}
First of all, by virtue of the Lemma \ref{van}, $\kappa:X\to\mathbf{F}^{*}$ defined by\linebreak $\kappa\left(x\right)=\frac{1}{\left|\kappa\right|_{\mathbf{F}}\left(x\right)}\kappa_{\mathbf{F}}\left(x\right)$, is a topological embedding into the unit sphere of $\mathbf{F}^{*}$ with respect to weak* topology.

As usual, the map $\omega\cdot\kappa_{\mathbf{F}}\circ\Phi=W_{\Phi,\omega}^{*}\circ\kappa_{\mathbf{E}}$ is weak* continuous from $Y$ into $\mathbf{F}^{*}$. Since $\left|\kappa\right|_{\mathbf{E}}$ is continuous and non-vanishing, it follows that $\lambda=\frac{\omega}{\left|\kappa\right|_{\mathbf{E}}}\cdot\kappa_{\mathbf{F}}\circ\Phi$ is also weak* continuous. Then $\lambda=\upsilon\cdot\kappa\circ\Phi$, where $\upsilon=\frac{\omega \left|\kappa\right|_{\mathbf{F}}\circ\Phi }{\left|\kappa\right|_{\mathbf{E}}}$ is a bounded function on $Y$. Indeed, since $\|\kappa\circ\Phi\|\equiv 1$, we have $$\left|\upsilon\left(y\right)\right|=\|\lambda\left(y\right)\|=\frac{\|W_{\Phi,\omega}^{*}y_{\mathbf{E}}\|}{\left\|y_{\mathbf{E}}\right\|}\le\|W_{\Phi,\omega}^{*}\|,$$ for every $y\in Y$. Hence, from Proposition \ref{te1} both $\Phi$ and $\upsilon$ are continuous outside of $\upsilon^{-1}\left(0\right)=\omega^{-1}\left(0\right)$. Thus, $\omega=\frac{\upsilon \left|\kappa\right|_{\mathbf{E}}}{\left|\kappa\right|_{\mathbf{F}}\circ\Phi}$ is also continuous outside of $\omega^{-1}\left(0\right)$.
\end{proof}

\begin{example}
It follows from the preceding theorem that any WCO from $\Co_{0}\left(X\right)$ into $\Co_{0}\left(Y\right)$ with a non-vanishing multiplicative symbol is a WCO$_{\Co}$. Note that this example is unaccessible neither for Theorem \ref{cwco}, nor for Corollary \ref{recm}.

However, we cannot guarantee continuity of $\omega$ in the case it vanishes. Let $X=\F$, let $\mathbf{F}=\Co_{0}\left(X\right)$ with the supremum norm, which is a complete NSCF. Define\linebreak  $\omega:X\to\R$ and $\Phi:X\to X$ by $\omega\left(0\right)=\Phi\left(0\right)=0$, $\Phi\left(x\right)=\frac{1}{x}$, $x\ne 0$ and $\omega\left(x\right)=\min\left\{1,\frac{1}{x}\right\}$, $x\ne 0$. Clearly, $\omega$ is not continuous, but one can show that $\|W_{\Phi,\omega}\|\le 1$ on $\mathbf{F}$.\qed\end{example}

\begin{remark}\label{ncwco}
Let us discuss other classes of NSCF's that satisfy the conditions of the preceding theorem. Due to the observation after Proposition \ref{dualstandard}, for any compactly embedded NSCF the norm of the point evaluations is a continuous function. If $\frac{1}{\left|\kappa\right|_{\mathbf{F}}}\mathbf{F}\subset \Co_{0}\left(X\right)$ and $1\in\mathbf{F}$ then $\lim\limits_{x\to\8}\left|\kappa\right|_{\mathbf{F}}\left(x\right)=+\8$. Conversely, if \linebreak $\lim\limits_{x\to\8}\left|\kappa\right|_{\mathbf{F}}\left(x\right)=+\8$ and the bounded functions form a dense set in $\mathbf{F}$ then \linebreak $\frac{1}{\left|\kappa\right|_{\mathbf{F}}}\mathbf{F}\subset \Co_{0}\left(X\right)$.

Indeed, for any $f\in \mathbf{F}$ and $\varepsilon>0$ there is $g\in \mathbf{F}$ and $C>0$, such that $\|f-g\|\le\frac{\varepsilon}{2}$ and $\left|g\left(x\right)\right|\le C$, for every $x\in X$. Since $\lim\limits_{x\to\8}\left|\kappa\right|_{\mathbf{F}}\left(x\right)=+\8$, there is a compact set $K\subset X$, such that $\left|\kappa\right|_{\mathbf{F}}\left(x\right)>\frac{2C}{\varepsilon}$, whenever $x\not\in K$. Consequently, for $x\not\in K$ we have that
$$\frac{\left|f\left(x\right)\right|}{\left|\kappa\right|_{\mathbf{F}}\left(x\right)}\le\frac{\left|f\left(x\right)-g\left(x\right)\right|+\left|g\left(x\right)\right|}{\left|\kappa\right|_{\mathbf{F}}\left(x\right)}\le
\frac{\left|\left<f-g,x_{\mathbf{F}}\right>\right|}{\left\|x_{\mathbf{F}}\right\|}+\frac{\left|g\left(x\right)\right|}{\left|\kappa\right|_{\mathbf{F}}\left(x\right)}\le\frac{\varepsilon}{2}+\frac{\varepsilon}{2}=\varepsilon.$$
Hence, $\lim\limits_{x\to\8}\frac{\left|f\left(x\right)\right|}{\left|\kappa\right|_{\mathbf{F}}\left(x\right)}=0$, and so $\frac{f}{\left|\kappa\right|_{\mathbf{F}}}\in\Co_{0}\left(X\right)$. Thus, $\frac{1}{\left|\kappa\right|_{\mathbf{F}}}\mathbf{F}\subset \Co_{0}\left(X\right)$.

Note that we cannot drop the density of bounded functions in $\mathbf{F}$. Indeed, consider $\mathbf{H}$ from Example \ref{halfinterval} with $\upsilon\left(x\right)=\frac{1}{2\pi-x}$. Then $\lim\limits_{x\to\8}\left|\kappa\right|_{\mathbf{H}}\left(x\right)=+\8$, but this NSCF admits a WCO, which fails to be a WCO$_{\Co}$.
\qed\end{remark}

Combining the obtained results with part (iii) of Proposition \ref{nin}, we get the following corollary.

\begin{corollary}
Let $\mathbf{F}$ and $\mathbf{E}$ be $2$-independent NSCF's over topological manifolds $X$ and $Y$ with $\dim X\le \dim Y$, such that the bounded functions form a dense set in $\mathbf{F}$ and $\left|\kappa\right|_{\mathbf{F}}$ and $\left|\kappa\right|_{\mathbf{E}}$ are both continuous with   $\lim\limits_{x\to\8}\left|\kappa\right|_{\mathbf{F}}\left(x\right)=\lim\limits_{y\to\8}\left|\kappa\right|_{\mathbf{E}}\left(y\right)=+\8$. Let $\Phi:Y\to X$ and $\omega:Y\to\Fp$ be such that $\omega$ is bounded and $W_{\Phi,\omega}$ is a linear homeomorphism from $\mathbf{F}$ onto $\mathbf{E}$. Then $\Phi$ is a homeomorphism.
\end{corollary}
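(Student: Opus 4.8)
My plan is to obtain the conclusion by combining the recognition results of this section with part (iii) of Proposition \ref{nin}; the only thing that proposition asks for beyond the present hypotheses is that $\Phi$ be continuous, so the argument naturally splits into two steps: first secure continuity of $\Phi$ (and $\omega$), then invoke Proposition \ref{nin}(iii). Note that topological manifolds are locally compact, so every NSCF result of this section is available here.

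\textbf{Step 1: continuity of $\Phi$.} I would apply Theorem \ref{cwcon} to $W_{\Phi,\omega}\in\Lo(\mathbf{F},\mathbf{E})$. Its hypotheses that $\mathbf{F},\mathbf{E}$ be $2$-independent NSCF's and that $\left|\kappa\right|_{\mathbf{F}},\left|\kappa\right|_{\mathbf{E}}$ be continuous are assumed outright, so the only point to check is $\frac{1}{\left|\kappa\right|_{\mathbf{F}}}\mathbf{F}\subset\Co_{0}(X)$. This is exactly what Remark \ref{ncwco} delivers from the two standing assumptions that the bounded functions are dense in $\mathbf{F}$ and $\lim\limits_{x\to\8}\left|\kappa\right|_{\mathbf{F}}(x)=+\8$. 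Theorem \ref{cwcon} then gives that $\omega$ and $\Phi$ are continuous outside of $\omega^{-1}(0)$; since $\omega$ is $\Fp$-valued, $\omega^{-1}(0)=\varnothing$, so $\Phi$ and $\omega$ are continuous on all of $Y$.

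\textbf{Step 2: $\Phi$ is a homeomorphism.} With $\Phi$ now continuous, all hypotheses of Proposition \ref{nin}(iii) are in force: $\mathbf{F},\mathbf{E}$ are $2$-independent NSCF's with $\lim\limits_{y\to\8}\left|\kappa\right|_{\mathbf{E}}(y)=+\8$, $\Phi$ is continuous, $\omega:Y\to\Fp\subset\F$ is bounded, $W_{\Phi,\omega}$ is a linear homeomorphism of $\mathbf{F}$ onto $\mathbf{E}$, and $X,Y$ are topological manifolds with $\dim X\le\dim Y$. That proposition then yields precisely that $\Phi$ is a homeomorphism.

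I do not expect a genuine obstacle at this level, since the substance has already been absorbed into Theorem \ref{cwcon} (through Theorem \ref{te} and Proposition \ref{te1}) and into Proposition \ref{nin} (through the Invariance of Domain argument). The only subtlety worth a sentence is the degenerate case in which $X$ or $Y$ is compact, which renders the limit hypotheses vacuous but harms nothing: Remark \ref{ncwco} and Theorem \ref{cwcon} remain valid verbatim (with $\Co_{0}(X)=\Co(X)$ when $X$ is compact, and one may alternatively read off continuity of $\Phi$ from Theorem \ref{cwco}(i)), and Proposition \ref{nin}(iii) imposes no non-vacuous condition in that situation.
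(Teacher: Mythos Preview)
Your proposal is correct and follows exactly the approach the paper intends: use Remark \ref{ncwco} to feed Theorem \ref{cwcon}, obtain continuity of $\Phi$ (and $\omega$) from the non-vanishing of $\omega$, and then invoke Proposition \ref{nin}(iii). The paper states the corollary with only the one-line justification ``combining the obtained results with part (iii) of Proposition \ref{nin}'', which is precisely what you have unpacked.
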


\section{Holomorphic Setting}\label{h}

Let $X$ be a (connected) complex manifold (without a boundary) of dimension $d$. Certainly $X$ is locally compact, but  we will assume that $X$ is non-compact, since otherwise from the Open Mapping theorem $\Ho\left(X\right)$ contains only constant functions. Since the issues considered in this section are mostly of local nature, the reader may as well assume that $X$ is a domain, i.e. an open connected subset of $\C^{d}$. We will call $A\subset X$ \emph{thin}, if it is contained in a zero-set of a non-zero holomorphic function on $X$. Thin sets are of complex dimension $d-1$ and are nowhere dense, due to Open Mapping theorem.

Let $\mathbf{F}$ be a NSHF over $X$. Since from Montel's theorem, any bounded set in $\Ho\left(X\right)$ is precompact, it follows that $\mathbf{F}$ is compactly embedded. Also, if $A\subset X$ is somewhere dense (i.e. such that $\Int \overline{A}\ne\varnothing$), then $A_{\mathbf{F}}^{\bot}=\left\{0\right\}$.

The holomorphically induced CO and MO (CO$_{\Ho}$ and MO$_{\Ho}$) are the CO and MO with holomorphic symbols. We will not state a version of propositions \ref{winj} and \ref{contprop} for the holomorphic setting, but only sum up their difference from the continuous setting in the following remark.

\begin{remark} If in the statement of propositions \ref{winj} and \ref{contprop} $X$ and $Y$ are complex manifolds and $\Phi,\Psi,\omega,\upsilon$ are holomorphic, then the assumption that $\omega$ vanishes on a nowhere dense set is fulfilled once $\omega\not\equiv 0$. Also, in part (i) of Proposition \ref{winj} we can only require that the image of $\Phi$ is somewhere dense. Moreover, this condition is satisfied automatically if the dimensions of $X$ and $Y$ coincide and $\Phi$ is a local injection at some point (e.g. if $X,Y$ are $1$-dimensional and $\Phi$ is not a constant). \qed
\end{remark}

Let us turn to the recognition of WCO$_{\Ho}$. Till the end of this section $X$ and $Y$ are (connected) complex manifolds with $\dim X=d$. Let us start with the following variation of parts (i) and (iii) of Proposition \ref{recmc}.

\begin{proposition}\label{recmh}
Let $\mathbf{F}\subset \Ho\left(X\right)$ be a linear subspace. Let $\omega:Y\to\F$ and\linebreak  $\Phi:Y\to X$ be such that $W_{\Phi,\omega}\mathbf{F}\subset\Ho\left(Y\right)$. Then:
\item[(i)] If $\mathbf{F}$ is $2$-independent and $\Phi$ is holomorphic, then $\omega$ is also holomorphic.
\item[(ii)] If $\mathbf{F}$ is $2$-independent, then $\omega^{-1}\left(0\right)$ is either equal to $Y$, or is thin in $Y$.
\item[(iii)] If $\mathbf{F}$ separates points of $X$, $\omega$ is holomorphic and $x\in X$, then either $\Phi^{-1}\left(x\right)$ is thin in $Y$, or $Y=\Phi^{-1}\left(x\right)\cup\omega^{-1}\left(0\right)$.
\end{proposition}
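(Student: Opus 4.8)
The plan is to transfer the content of Proposition \ref{recmc}(iii) from the continuous to the holomorphic category by exploiting the holomorphy of the coordinate map $\lambda=\omega\cdot\kappa_{\mathbf{F}}\circ\Phi$. First I would observe that since $W_{\Phi,\omega}\mathbf{F}\subset\Ho\left(Y\right)$, for every $f\in\mathbf{F}$ the function $y\mapsto\left<f,\lambda\left(y\right)\right>=\omega\left(y\right)f\left(\Phi\left(y\right)\right)$ is holomorphic on $Y$; that is, $\lambda$ is a weakly holomorphic $\mathbf{F}'$-valued map, and weak holomorphy of a locally bounded map into a dual space is enough to run the zero-set arguments coordinatewise. (One does not even need strong holomorphy: each scalar function $\left<f,\lambda\left(\cdot\right)\right>$ being holomorphic is all that the argument touches.)

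For part (ii): since $\mathbf{F}$ is $2$-independent, $\omega^{-1}\left(0\right)=\lambda^{-1}\left(0_{\mathbf{F}'}\right)$, exactly as in the proof of Proposition \ref{recmc}(iii). Now $\omega^{-1}\left(0\right)=\bigcap_{f\in\mathbf{F}}\left\{y\in Y:\left<f,\lambda\left(y\right)\right>=0\right\}$, an intersection of zero-sets of holomorphic functions on $Y$. If $\omega\not\equiv 0$ on $Y$ there is some $f\in\mathbf{F}$ with $\left<f,\lambda\left(\cdot\right)\right>\not\equiv 0$ (otherwise $\lambda\equiv 0$, hence $\omega\equiv 0$); the zero-set of that single holomorphic function already contains $\omega^{-1}\left(0\right)$, so $\omega^{-1}\left(0\right)$ is thin in $Y$ by definition. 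The only subtlety is the dichotomy with $Y$ itself, which is handled precisely by the case split on whether $\omega\equiv 0$.

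For part (iii): here $\omega$ is assumed holomorphic, so $\lambda=\omega\cdot\kappa_{\mathbf{F}}\circ\Phi$ is a weakly holomorphic map, and I would fix $x\in X$ and write $\Phi^{-1}\left(x\right)\cap\left(Y\backslash\omega^{-1}\left(0\right)\right)$ in terms of vanishing of holomorphic functions. Concretely, since $\mathbf{F}$ separates points of $X$, for each $x'\ne x$ there is $f\in\mathbf{F}$ with $f\left(x\right)=0$, $f\left(x'\right)\ne 0$; more usefully, on the open set $Z=Y\backslash\omega^{-1}\left(0\right)$ we have $y\in\Phi^{-1}\left(x\right)$ iff $f\left(\Phi\left(y\right)\right)=f\left(x\right)$ for all $f\in\mathbf{F}$, i.e. iff $\left<f,\lambda\left(y\right)\right>=\omega\left(y\right)f\left(x\right)$ for all $f\in\mathbf{F}$, which means $\lambda\left(y\right)=\omega\left(y\right)x_{\mathbf{F}}$, i.e. $y\in\lambda^{-1}\left(\F x_{\mathbf{F}}\right)$ together with the correct scalar. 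Picking any $f_0\in\mathbf{F}$ with $f_0\left(x\right)=0$ but $f_0\not\equiv 0$ near $\Phi\left(Z\right)$... — actually the cleanest route: the function $g_f:=\left<f,\lambda\left(\cdot\right)\right>-\omega\cdot f\left(x\right)$ is holomorphic on $Y$ for each $f\in\mathbf{F}$, and $\Phi^{-1}\left(x\right)\cup\omega^{-1}\left(0\right)=\bigcap_{f\in\mathbf{F}}g_f^{-1}\left(0\right)$ (the inclusion $\subseteq$ is immediate; $\supseteq$ uses $2$-independence — wait, here we only have separation, so I must be careful). I expect this to be the main obstacle: under mere point-separation one gets $\Phi\left(y\right)_{\mathbf{F}}=\Phi\left(y'\right)_{\mathbf{F}}\Rightarrow\Phi\left(y\right)=\Phi\left(y'\right)$ but one cannot split off the scalar, so I would argue instead that on each connected component of $Y$, either all the $g_f$ vanish identically (forcing that component into $\Phi^{-1}\left(x\right)\cup\omega^{-1}\left(0\right)$, and then, since a nonempty open-in-$Y$ thin-or-everything alternative must be resolved, one pushes to all of $Y$ because $Y$ is connected), or some $g_{f}\not\equiv 0$, whose zero-set is thin and contains $\Phi^{-1}\left(x\right)\cup\omega^{-1}\left(0\right)$. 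The connectedness of $Y$ plus the identity principle for holomorphic functions is what converts the local ``thin or all'' into the stated global dichotomy; assembling this carefully, checking that $\Phi^{-1}\left(x\right)\cup\omega^{-1}\left(0\right)\subseteq g_f^{-1}\left(0\right)$ for every $f$ (which is the easy direction and needs only $\lambda\left(y\right)=\omega\left(y\right)\Phi\left(y\right)_{\mathbf{F}}$), is the crux.
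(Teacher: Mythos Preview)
Your approach to parts (ii) and (iii) is correct and is essentially the paper's argument, repackaged through the auxiliary functions $g_f=W_{\Phi,\omega}f-f(x)\,\omega$. The paper argues (iii) contrapositively: assuming $\Phi^{-1}(x)$ is not thin and $\omega\not\equiv 0$, the set $\Phi^{-1}(x)\setminus\omega^{-1}(0)$ is non-thin (here $\omega^{-1}(0)$ is thin simply because $\omega$ is holomorphic and nonzero, not via (ii)), so by the identity principle $W_{\Phi,\omega}f\equiv f(x)\,\omega$, which is exactly your ``all $g_f\equiv 0$'' branch; then separation gives $\Phi(y)=x$ for every $y\notin\omega^{-1}(0)$. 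Your dichotomy on whether some $g_f\not\equiv 0$ is the same two-case split viewed from the other side.

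Your hesitation about the inclusion $\bigcap_f g_f^{-1}(0)\subseteq\Phi^{-1}(x)\cup\omega^{-1}(0)$ is unwarranted: if $g_f(y)=0$ for every $f$ and $\omega(y)\ne 0$, then $f(\Phi(y))=f(x)$ for every $f\in\mathbf{F}$, i.e.\ $\Phi(y)_{\mathbf{F}}=x_{\mathbf{F}}$, and mere point-separation (not $2$-independence) already forces $\Phi(y)=x$. So the equality $\Phi^{-1}(x)\cup\omega^{-1}(0)=\bigcap_f g_f^{-1}(0)$ holds on the nose, and your ``all $g_f$ vanish'' branch immediately yields $Y=\Phi^{-1}(x)\cup\omega^{-1}(0)$ without any further pushing-by-connectedness.

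You do not address part (i); the paper simply repeats the proof of Proposition~\ref{recmc}(i) verbatim with ``holomorphic'' in place of ``continuous'': near any $y$ pick $f\in\mathbf{F}$ with $f(\Phi(y))\ne 0$, so that on a neighborhood $\omega=\dfrac{W_{\Phi,\omega}f}{f\circ\Phi}$ is a quotient of holomorphic functions.
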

\begin{proof}
The proof of part (i) is the same as of part (i) of Proposition \ref{recmc}.

(ii): Assume that there is $y\in Y$ such that $\omega\left(y\right)\ne 0$. Since $\mathbf{F}$ is $2$-independent, there is $f\in \mathbf{F}$ such that $f\left(\Phi\left(y\right)\right)\ne 0$. Then $g=W_{\Phi,\omega}f$ is a holomorphic function on $Y$, which does not vanish at $y$. Hence,   $\omega^{-1}\left(0\right)\subset g^{-1}\left(0\right)$ is a thin set in $Y$.

(iii): It is sufficient to consider the case $\omega\not\equiv 0$. Assume that $\Phi^{-1}\left(x\right)$ is not thin in $Y$. Since $\omega^{-1}\left(0\right)$ is thin, it follows that $\Phi^{-1}\left(x\right) \backslash\omega^{-1}\left(0\right)$ is not thin. For every $y\in\Phi^{-1}\left(x\right) \backslash\omega^{-1}\left(0\right)$ and $f\in \mathbf{F}$ we have that   $\left[W_{\Phi,\omega}f\right]\left(y\right)=f\left(x\right)\omega\left(y\right)$, and so holomorphic functions $W_{\Phi,\omega}f$ and $f\left(x\right)\omega$ coincide on a non-thin set. Hence, $W_{\Phi,\omega}f=f\left(x\right)\omega$, for every $f\in \mathbf{F}$, and so $\Phi\left(y\right)_{\mathbf{F}}=x_{\mathbf{F}}$ for every $y\in Y\backslash\omega^{-1}\left(0\right)$. Since $\mathbf{F}$ separates points of $X$, we conclude that $\Phi\left(y\right)=x$ for every $y\in Y\backslash\omega^{-1}\left(0\right)$.
\end{proof}

It follows immediately from part (i) of the proposition, that any MO from a $2$-independent NSHF into a NSHF is a MO$_{\Ho}$. During our investigation of the recognition problem we routinely avoided WCO's with vanishing multiplicative symbols and non-$2$-independent NSCF's and NSHF's. The reason for this is that we cannot reconstruct the symbols of a WCO from the data of this WCO, unless its multiplicative symbol does not vanish and the domain NSHF is $2$-independent. However, in the context of NSHF's this obstruction can be overcome. Let us modify our problem, by a slight broadening of the term ``WCO$_{\Ho}$''. Let $\mathbf{F}$ and $\mathbf{Y}$ be NSHF's over complex manifolds $X$ and $Y$ respectively. Then, for $\Phi:Y\to X$ and $\omega:Y\to\C$ the operator $W_{\Phi,\omega}$ is a WCO$_{\Ho}$ if there are holomorphic $\Phi':Y\to X$ and $\omega':Y\to\C$, such that $W_{\Phi,\omega}=W_{\Phi',\omega'}$.

Upon examination of the proof of part (i) of Proposition \ref{recmc} it becomes clear that for that statement we need a condition on $\mathbf{F}$, which is weaker than $2$-independence. Namely, we need $x_{\mathbf{F}}\ne 0_{\mathbf{F}^{*}}$, for every $x\in X$. It turns out, that in the holomorphic case we often do not need even that assumption.

\begin{proposition}
Any continuous multiplication operator on a NSHF is a MO$_{\Ho}$.
\end{proposition}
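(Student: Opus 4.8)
The plan is to produce a holomorphic $\omega'$ with $M_\omega=M_{\omega'}$ by showing that $\omega$ is already holomorphic off a thin set and then removing the resulting singularities. Write $\mathbf{F}$ for the NSHF over $X$ on which $M_\omega$ acts; we may assume $\mathbf{F}\ne\{0\}$, since otherwise $M_\omega=0$ and there is nothing to prove. Set $Z=\bigcap_{f\in\mathbf{F}}f^{-1}(0)$, the common zero set of $\mathbf{F}$. Fixing any $f_0\in\mathbf{F}\setminus\{0\}$ we have $Z\subset f_0^{-1}(0)=:V$, a proper analytic subset of $X$ (proper by the Open Mapping/identity theorem); thus $Z$ is thin and $X\setminus V$ is dense in $X$.

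First I would check that $\omega$ is holomorphic on $X\setminus Z$: given $x\notin Z$, choose $f\in\mathbf{F}$ with $f(x)\ne 0$; by continuity $f$ is non-vanishing on some neighbourhood $U$ of $x$, and on $U$ one has $\omega=\dfrac{M_\omega f}{f}$, a quotient of holomorphic functions with non-vanishing denominator. Next I would note that $\omega$ is bounded on $X\setminus Z$: for $x\notin Z$ we have $x_{\mathbf{F}}\ne 0_{\mathbf{F}^{*}}$, and the adjoint identity $M_\omega^{*}x_{\mathbf{F}}=\omega(x)x_{\mathbf{F}}$ gives $|\omega(x)|\,\|x_{\mathbf{F}}\|\le\|M_\omega\|\,\|x_{\mathbf{F}}\|$, so $|\omega(x)|\le\|M_\omega\|$ on $X\setminus Z$ (this is the content of Remark \ref{bound}).

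Then I would invoke the Riemann removable singularity theorem. The restriction $\omega|_{X\setminus V}$ is holomorphic and bounded (by $\|M_\omega\|$), and $V$ is an analytic subset of codimension $\ge 1$, so $\omega|_{X\setminus V}$ extends to a holomorphic function $\omega'$ on all of $X$. Since $X\setminus V$ is dense and $\omega$ is already holomorphic, hence continuous, on the larger set $X\setminus Z\supset V\setminus Z$, a limiting argument shows $\omega'=\omega$ on all of $X\setminus Z$. Finally, to conclude $M_\omega=M_{\omega'}$ I would argue pointwise: for $f\in\mathbf{F}$, at points of $X\setminus Z$ we have $\omega' f=\omega f$ because $\omega'=\omega$ there, while at points of $Z$ both products vanish because $f$ does (as $Z\subset f^{-1}(0)$). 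Hence $M_{\omega'}f=\omega' f=\omega f=M_\omega f$ for every $f\in\mathbf{F}$, so $M_\omega=M_{\omega'}$ is a MO$_{\Ho}$.

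The only ingredient that is not pure bookkeeping is the Riemann extension theorem, and it applies precisely because of the uniform bound $|\omega|\le\|M_\omega\|$ away from $Z$; I expect that to be the crux. The one subtlety worth keeping in mind is that $\mathbf{F}$ is not assumed to separate points, so $\omega$ is genuinely undetermined on the thin set $Z$ — but this causes no difficulty, since every $f\in\mathbf{F}$ vanishes on $Z$, and hence $M_\omega$ does not see the values of $\omega$ there, which is exactly why passing from $\omega$ to its holomorphic extension $\omega'$ changes nothing at the operator level.
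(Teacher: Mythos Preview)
Your argument is correct and follows the same route as the paper: identify the common zero set $Z=\{x:x_{\mathbf{F}}=0\}$, show $\omega$ is holomorphic and bounded by $\|M_\omega\|$ on $X\setminus Z$, then remove the thin singularity. The only cosmetic difference is that the paper applies the Removable Singularity theorem directly across the thin closed set $Z$, whereas you extend across the larger analytic set $V=f_0^{-1}(0)$ and then argue by density that $\omega'=\omega$ on $V\setminus Z$; both are fine.
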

\begin{proof}
Let $\mathbf{F}$ be a NSHF over complex manifold $X$. We may assume that there is a non-constant $f\in \mathbf{F}$. Then the set $Z=\left\{x\in X\left|x_{\mathbf{F}}= 0_{\mathbf{F}^{*}}\right.\right\}\subset f^{-1}\left(0\right)$ is closed and thin. Let $\omega:X\to\C$ be such that $M_{\omega}$ is a continuous operator on $\mathbf{F}$. Viewing $\mathbf{F}$ as a NSHF's over $X\backslash Z$, we can conclude from the discussion above that $\omega\left|_{X\backslash Z}\right.\in \Ho\left(X\backslash Z\right)$. Also, $\omega\left|_{X\backslash Z}\right.$ is bounded by $\|M_{\omega}\|$, due to Remark \ref{bound}. Hence, by Removable Singularity theorem (see \cite[A1.4]{chirka}) there is a holomorphic function $\omega':X\to \C$, which coincides with $\omega$ on $X\backslash Z$. Thus, $M_{\omega}=M_{\omega'}$, and so $M_{\omega}$ is a MO$_{\Ho}$.
\end{proof}

The rest of the section is dedicated to the analogous problems for CO$_{\Ho}$ and WCO$_{\Ho}$. Recall that in the continuous setting the recognition of CO$_{\Co}$ was relatively simple, while the recognition of WCO$_{\Co}$ involved more sophisticated technics. The situation in the holomorphic setting is quite opposite: the result for the WCO$_{\Ho}$ is a consequence of the result for CO$_{\Ho}$. Let us state the main theorem of the section.

\begin{theorem}\label{hcwco}
\item[(i)] Any CO$_{\Co}$ from a NSHF that separates points of its phase space into another NSHF is a CO$_{\Ho}$.
\item[(ii)] Any WCO$_{\Co}$ from a $2$-independent NSHF into another NSHF is a WCO$_{\Ho}$.
\end{theorem}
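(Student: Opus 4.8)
The plan is to reduce part (ii) to part (i) via the device in Remark \ref{wcoco}, and to prove part (i) directly by a local argument: a continuous linear map that is a composition operator with a \emph{continuous} symbol must in fact have a \emph{holomorphic} symbol, because the coordinate functions on $X$ are holomorphic and can be pulled back.

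For part (i), let $\mathbf{F}$ be a NSHF over $X$ that separates points of $X$, let $\mathbf{E}$ be a NSHF over $Y$, and suppose $C_\Phi\in\Lo(\mathbf{F},\mathbf{E})$ with $\Phi:Y\to X$ continuous. Fix $y_0\in Y$ and let $x_0=\Phi(y_0)$. Choose a holomorphic coordinate chart $(U,z_1,\dots,z_d)$ on $X$ around $x_0$ and a holomorphic chart on $Y$ around $y_0$; by continuity of $\Phi$ we may shrink the $Y$-chart so that $\Phi$ maps it into $U$. The point is that each coordinate $z_j$ need not lie in $\mathbf{F}$, but we can reach it through point evaluations: since $\mathbf{F}$ separates points of $X$, the evaluation map $\kappa_{\mathbf{F}}:X\to\mathbf{F}'$ is injective, and since $\mathbf{F}\subset\Ho(X)$ it is holomorphic as a map into $(\mathbf{F}^*,\mathrm{weak}^*)$ (indeed $x\mapsto\langle f,x_{\mathbf{F}}\rangle=f(x)$ is holomorphic for every $f$, which on a finite-dimensional chart is exactly holomorphy of $\kappa_{\mathbf{F}}$ into the Fréchet space $(\mathbf{F}^*,\mathrm{weak}^*)$ — or one works with $\Ho(X)$ itself since $\mathbf{F}$ embeds there). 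The composition operator satisfies $C_\Phi^{*}y_{\mathbf{E}}=\Phi(y)_{\mathbf{F}}$ by Proposition \ref{rec}, so $\kappa_{\mathbf{F}}\circ\Phi=C_\Phi^{*}\circ\kappa_{\mathbf{E}}$, and the right-hand side is a composition of the bounded operator $C_\Phi^{*}$ with the weak* holomorphic map $\kappa_{\mathbf{E}}$, hence weak* holomorphic on $Y$. Thus $\kappa_{\mathbf{F}}\circ\Phi:Y\to\mathbf{F}'$ is holomorphic. Now $\Phi$ is recovered locally by composing with the inverse of $\kappa_{\mathbf{F}}$ on its image; the issue is that $\kappa_{\mathbf{F}}^{-1}$ need not be holomorphic \emph{a priori}, only continuous (by Proposition \ref{topemb}, if $\mathbf{F}$ generated the topology). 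To upgrade to holomorphy I would argue coordinatewise: we already know $\Phi$ is continuous, so $z_j\circ\Phi$ is a continuous $\C$-valued function on the $Y$-chart, and I must show it is holomorphic. For this, restrict attention to a small polydisc in the $Y$-chart and use Hartogs / the theorem on separately holomorphic functions: it suffices to check that $z_j\circ\Phi$ is holomorphic along each complex line through each point. Along such a line $L$, parametrized by $\zeta\in\D$, the map $\zeta\mapsto\kappa_{\mathbf{F}}(\Phi(\gamma(\zeta)))$ is a holomorphic curve in $\mathbf{F}'$ whose image lies in the injective analytic image $\kappa_{\mathbf{F}}(X)$; pairing with elements of $\mathbf{F}$ shows $\zeta\mapsto f(\Phi(\gamma(\zeta)))$ is holomorphic for every $f\in\mathbf{F}$. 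Since $\mathbf{F}\subset\Ho(X)$ and $\mathbf{F}$ separates points, and since $\Phi\circ\gamma$ is already continuous with values in the chart $U$, a normal-families / Cauchy-integral argument (write $f(\Phi(\gamma(\zeta)))$ via its Cauchy integral and differentiate in $\zeta$) forces $\Phi\circ\gamma$ itself to be holomorphic into $U\subset\C^d$: concretely, $\partial_{\bar\zeta}(z_j\circ\Phi\circ\gamma)$ can be detected by testing against sufficiently many $f$, and all such tests vanish. Hence $z_j\circ\Phi$ is separately holomorphic and continuous, so holomorphic, so $\Phi$ is holomorphic near $y_0$; as $y_0$ was arbitrary, $\Phi\in\Ho(Y,X)$ and $C_\Phi$ is a CO$_{\Ho}$.

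For part (ii), let $\mathbf{F}$ be a $2$-independent NSHF over $X$, $\mathbf{E}$ a NSHF over $Y$, and $W_{\Phi,\omega}\in\Lo(\mathbf{F},\mathbf{E})$ a WCO$_{\Co}$, so $\Phi:Y\to X$ and $\omega:Y\to\C$ are continuous. If $\omega\equiv 0$ then $W_{\Phi,\omega}=0=W_{\Phi',0}$ for any constant holomorphic $\Phi'$, and we are done. Otherwise, by part (ii) of Proposition \ref{recmh} (applied with continuity in place of holomorphy, using that $\omega$ is continuous and $\mathbf{F}$ is $2$-independent, together with part (iii) of Proposition \ref{recmc}) the set $N=\omega^{-1}(0)$ is closed and nowhere dense in $Y$; in fact, since $\omega$ is continuous on a manifold and $\omega=W_{\Phi,\omega}1$ need not be available, I instead note that on $Y\setminus N$ the weight is non-vanishing. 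Form $\widetilde{\mathbf{F}}$ on $\Fp\times X$ as in Remark \ref{wcoco}; since $\mathbf{F}$ is $2$-independent, $\widetilde{\mathbf{F}}$ separates points of $\Fp\times X$, and $\widetilde{\mathbf{F}}\subset\Ho(\Fp\times X)$ because $\widetilde f(a,x)=af(x)$ is holomorphic in $(a,x)$. On the open set $Y\setminus N$ the map $\omega\times\Phi:Y\setminus N\to\Fp\times X$ is continuous and satisfies $C_{\omega\times\Phi}\widetilde f=W_{\Phi,\omega}f|_{Y\setminus N}$, so $C_{\omega\times\Phi}\in\Lo(\widetilde{\mathbf{F}},\mathbf{E}|_{Y\setminus N})$, a CO$_{\Co}$ from a point-separating NSHF; by part (i), $\omega\times\Phi$ is holomorphic on $Y\setminus N$, i.e. both $\omega$ and $\Phi$ are holomorphic there. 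It remains to extend across $N$: by part (i) of Proposition \ref{recmh} applied to $Y\setminus N$, or directly, $\Phi|_{Y\setminus N}$ is a bounded-into-$X$ holomorphic map (locally), and combined with its continuity on all of $Y$ and the Removable Singularity theorem (thin $N$, locally bounded), $\Phi$ extends to a holomorphic $\Phi':Y\to X$ agreeing with $\Phi$ off $N$; and $\omega$, continuous on $Y$ and holomorphic off the thin set $N$, likewise extends to a holomorphic $\omega':Y\to\C$ (Removable Singularity, using continuity hence local boundedness). Since $W_{\Phi',\omega'}$ and $W_{\Phi,\omega}$ agree on the somewhere-dense set $Y\setminus N$, and the inducing data are continuous, Proposition \ref{contprop} (or directly, density) gives $W_{\Phi,\omega}=W_{\Phi',\omega'}$, so $W_{\Phi,\omega}$ is a WCO$_{\Ho}$.

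The main obstacle I expect is exactly the upgrade from continuity to holomorphy of $\Phi$ in part (i): the weak* holomorphy of $\kappa_{\mathbf{F}}\circ\Phi$ is automatic, but $\kappa_{\mathbf{F}}$ is in general only a continuous injection, not a biholomorphism onto its (possibly non-smooth, non-locally-closed) image, so one cannot simply invert. The fix is to stay continuous as far as possible (we already have $\Phi$ continuous, which is the given) and then promote to holomorphic coordinatewise using that $f\circ\Phi$ is holomorphic for all $f\in\mathbf{F}$ together with enough $f$'s to see all first-order complex derivatives — this is where separation of points is used in an essential way, and where a careful local Hartogs/Cauchy-integral argument is needed rather than a soft one. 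A secondary technical point is making sure the Removable Singularity applications are legitimate, which requires the thin-ness of $\omega^{-1}(0)$ and local boundedness of $\Phi$ near $N$; both follow from continuity of the symbols on the ambient manifold plus the earlier propositions, but should be spelled out.
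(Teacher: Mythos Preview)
Your reduction of part (ii) to part (i) via Remark \ref{wcoco} is exactly what the paper does, and your handling of the zero set $N=\omega^{-1}(0)$ (thin by Proposition \ref{recmh}(ii), then Removable Singularity using continuity for local boundedness) is fine. One quibble: you do not need to manufacture a new $\Phi'$ and invoke Proposition \ref{contprop}; since $\omega$ and $\Phi$ are themselves continuous and holomorphic off a thin set, they are already holomorphic.

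The real issue is part (i). You correctly identify the obstacle --- $\kappa_{\mathbf{F}}$ is only a continuous injection, not a local biholomorphism onto its image --- but your proposed fix does not close the gap. The claim that ``$\partial_{\bar\zeta}(z_j\circ\Phi\circ\gamma)$ can be detected by testing against sufficiently many $f$'' presupposes two things you have not established: first, that $\Phi$ is real-differentiable (continuity alone gives nothing to put into a chain rule); second, and more seriously, that the gradients $\{\nabla f(x):f\in\mathbf{F}\}$ span $\C^d$ at enough points. Separation of points is a zeroth-order condition and does \emph{not} obviously imply anything about first-order independence of gradients; this implication is a genuine lemma (the paper's Lemma \ref{thin1}) whose proof uses the Implicit Function Theorem and is not a soft Hartogs/Cauchy argument. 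Even once you have $f_1,\dots,f_d\in\mathbf{F}$ with $\det[\partial f_i/\partial z_j]\not\equiv 0$, that determinant vanishes on a thin set, and on the preimage of that set under $\Phi$ you still know nothing about holomorphy of $\Phi$. The paper handles this with Rado's theorem (Lemma \ref{remrad}): $h\circ\Phi$ is continuous and holomorphic off its own zero set, hence holomorphic, hence its zero set is thin (or everything), after which Removable Singularity finishes --- unless $\Phi(Y)$ lands in a lower-dimensional analytic set, which forces an induction on $\dim$ of an analytic set containing $\Phi(Y)$ (Lemma \ref{coh}). None of these ingredients appear in your sketch, and without them the argument for (i) is incomplete.
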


In order to prove the theorem we need a sequence of lemmas. Let us start with a corollary of Rado's theorem (see \cite[A1.5]{chirka}) and Removable Singularity theorem.

\begin{lemma}\label{remrad}
Let $X,Y$ be (connected) complex manifolds, let $h$ be a holomorphic function on $X$, and let $\Phi$ be a continuous map from $Y$ into $X$, which is also holomorphic on $Z=Y\backslash \Phi^{-1}\left(h^{-1}\left(0\right)\right)$. Then either $\Phi$ is holomorphic, or $\Phi\left(Y\right)\subset h^{-1}\left(0\right)$.
\end{lemma}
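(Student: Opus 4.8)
The plan is to localize the problem and feed it into the two named theorems (Rado's theorem and the Removable Singularity theorem), which together say that a continuous function on a manifold that is holomorphic away from the zero-set of a holomorphic function is in fact holomorphic everywhere. First I would dispose of the trivial case: if $h\equiv 0$ then $h^{-1}\left(0\right)=X$ and the alternative $\Phi\left(Y\right)\subset h^{-1}\left(0\right)$ holds, so assume $h\not\equiv 0$, whence $N:=h^{-1}\left(0\right)$ is a closed thin subset of $X$ and $M:=\Phi^{-1}\left(N\right)$ is closed in $Y$ (by continuity of $\Phi$). Next I would observe that it suffices to prove holomorphicity of $\Phi$ near an arbitrary point $y_{0}\in M$, since holomorphicity is a local property and $\Phi$ is already holomorphic on $Z=Y\setminus M$ by hypothesis; so the whole question is about extending across $M$.

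Now fix $y_{0}\in M$. Since holomorphicity of a map into a manifold is checked in local coordinate charts on both sides, I would pass to charts: choose a coordinate neighbourhood of $\Phi\left(y_{0}\right)$ in $X$ (identified with an open set in $\C^{d}$) and a coordinate neighbourhood $V$ of $y_{0}$ in $Y$ with $\Phi\left(V\right)$ contained in the $X$-chart. Then $\Phi\left|_{V}\right.$ is given by a $d$-tuple of continuous complex-valued functions $\Phi_{1},\dots,\Phi_{d}$ on $V$, each holomorphic on $V\setminus M$. It remains to show each $\Phi_{j}$ is holomorphic on $V$. For this I would apply the composition: the key point is that on $V\setminus M$ we have, by definition of $M$, that $h\circ\Phi$ is a holomorphic function which is nowhere zero — wait, more precisely $h\circ\Phi$ is continuous on $V$, holomorphic on $V\setminus M$ (composition of holomorphics), and its zero set inside $V$ is exactly $V\cap M$. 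So the auxiliary function $g:=h\circ\Phi$ is a continuous function on $V$, holomorphic on $V\setminus g^{-1}\left(0\right)$; by Rado's theorem $g$ is holomorphic on all of $V$. That alone does not yet give holomorphicity of the $\Phi_{j}$, so the real mechanism is: each $\Phi_{j}$ is continuous on $V$ and holomorphic on the complement of $M=g^{-1}\left(0\right)$, where $g$ is now known to be holomorphic on $V$; hence $M$ is locally contained in the zero-set of a holomorphic function, i.e. it is (locally) thin and in particular closed with empty interior, and Removable Singularity theorem applies to extend each bounded-near... hmm — actually one must first argue $\Phi_{j}$ is locally bounded near $M$, which is immediate from continuity of $\Phi_{j}$ on the compact closure of a small ball. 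Then Removable Singularity theorem upgrades $\Phi_{j}$ from "holomorphic off a thin closed set, locally bounded" to "holomorphic", completing the argument.

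The main obstacle I anticipate is the bookkeeping at the point where $\Phi\left(y_{0}\right)\in N$ but one cannot assume $g=h\circ\Phi$ is not identically zero on a neighbourhood of $y_{0}$: if $g\equiv 0$ on some nonempty open $W\subset V$, then $\Phi\left(W\right)\subset N$, and by the identity principle (connectedness of $Y$ is in force) this would force $\Phi\left(Y\right)\subset N=h^{-1}\left(0\right)$, landing us in the second alternative of the lemma. So the case division is genuinely: either $g\circ\Phi$ vanishes on a neighbourhood of some point — then by analytic continuation $\Phi\left(Y\right)\subset h^{-1}\left(0\right)$ — or $M$ has empty interior everywhere, in which case Rado plus Removable Singularity (applied coordinatewise, after noting local boundedness from continuity) give holomorphicity of $\Phi$ on all of $Y$. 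I would be careful that "thin" and "zero-set of a holomorphic function" are handled locally, since $M$ need not globally be the zero set of one holomorphic function on $Y$, only locally so after we know $g$ is holomorphic.
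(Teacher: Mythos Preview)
Your proposal is correct and follows essentially the same route as the paper: apply Rado's theorem to $g=h\circ\Phi$ (continuous on $Y$, holomorphic off its own zero-set) to make $g$ holomorphic, then split into the case $g\equiv 0$ (giving $\Phi(Y)\subset h^{-1}(0)$) versus $g\not\equiv 0$ (so $M=g^{-1}(0)$ is thin and Removable Singularity, together with continuity of $\Phi$, forces $\Phi$ holomorphic). One organizational remark: you apply Rado only on the local chart $V$ but then invoke the identity principle on all of $Y$; this is fine because your local argument, run at every $y_0\in M$ and combined with holomorphicity on $Z$, yields $g\in\Ho(Y)$ globally, but it is cleaner (and is what the paper does) to apply Rado to $g$ directly on $Y$ from the outset and only afterwards pass to charts for the manifold-valued Removable Singularity step.
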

\begin{proof}
First, since both $\Phi$ and $h$ are continuous, the set $Z=Y\backslash \left(h\circ\Phi\right)^{-1}\left(0\right)$ is open. Since $h$ is holomorphic on $X$ and $\Phi$ is holomorphic on $Z$, we get that $h\circ\Phi$ is holomorphic outside of its zero-set. Hence, by Rado's theorem, $h\circ\Phi$ is holomorphic on $Y$. If $h\circ\Phi \equiv 0$, then $\Phi\left(Y\right)\subset h^{-1}\left(0\right)$, and so we may assume that $h\circ\Phi\not\equiv 0$. In this case $Y\backslash Z=\left(h\circ\Phi\right)^{-1}\left(0\right)$ is a thin set, and so $\Phi$ is a continuous function, which is holomorphic outside of a thin set. By Removable Singularity theorem, there is a unique holomorphic extension of $\Phi\left|_{Z}\right.$ on $Y$. But since $\Phi$ is continuous, and $Z$ is dense in $Y$, we get that $\Phi$ is equal that holomorphic extension. Thus, $\Phi$ is holomorphic.
\end{proof}

\begin{lemma}\label{thin1}
Assume that $Z$ is a domain in $\C^{n}$ and let $\mathbf{F}$ be a collection of functions in $\Ho\left(Z\right)$ that separate the points of $Z$. Then there are $f_{1},..., f_{n}\in\mathbf{F}$, such that $\det\left[\frac{\partial f_{i}}{\partial z_{j}}\right]_{i,j=1}^{n}\not\equiv 0$.
\end{lemma}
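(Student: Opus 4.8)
The plan is to induct on $n$, building up a tuple of functions from $\mathbf{F}$ whose Jacobian determinant is not identically zero by showing that at each stage, if the current $k$-minor vanishes identically, then $\mathbf{F}$ fails to separate points. First I would handle the base case $n=1$: since $\mathbf{F}$ separates points of the domain $Z\subset\C$, there is some $f_1\in\mathbf{F}$ that is non-constant (a constant function separates no points), and a non-constant holomorphic function on a domain has $\frac{\partial f_1}{\partial z_1}=f_1'\not\equiv 0$. For the inductive step, suppose we have already chosen $f_1,\dots,f_{k}\in\mathbf{F}$ such that the $k\times k$ minor $M_k(z)=\det\left[\frac{\partial f_i}{\partial z_j}(z)\right]_{i,j=1}^{k}$ is not identically zero on $Z$. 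Let $U\subset Z$ be the nonempty open set where $M_k\ne 0$; on $U$ the map $g=(f_1,\dots,f_k)$ has rank $k$, so by the holomorphic rank/implicit function theorem its fibers are (locally) $(n-k)$-dimensional complex submanifolds. The claim is that there exists $f_{k+1}\in\mathbf{F}$ whose restriction to a generic such fiber is non-constant; from this and a determinant expansion I would conclude that the $(k+1)$-minor obtained by adjoining $f_{k+1}$ (and picking the right $(k+1)$-st coordinate) is not identically zero.

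The core observation making the claim work: if, to the contrary, \emph{every} $f\in\mathbf{F}$ were constant along the local fibers of $g=(f_1,\dots,f_k)$ on $U$, then any two points $p,q$ lying in the same connected fiber component would satisfy $f(p)=f(q)$ for all $f\in\mathbf{F}$, contradicting that $\mathbf{F}$ separates points of $Z$ (here one uses $k<n$, so the fibers are positive-dimensional and in particular contain distinct points). Hence some $f_{k+1}\in\mathbf{F}$ is non-constant on some fiber component through a point $z_0\in U$. Non-constancy on that $(n-k)$-dimensional submanifold means $df_{k+1}$ is not identically zero in the directions tangent to the fiber, i.e. the gradient $\nabla f_{k+1}(z_0)$ is not in the row span of the Jacobian of $g$ at $z_0$ — which is exactly the statement that the rank of the $(k+1)\times n$ matrix $\left[\frac{\partial f_i}{\partial z_j}(z_0)\right]_{i=1,\dots,k+1}$ is $k+1$. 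Therefore some $(k+1)\times(k+1)$ minor of it is nonzero at $z_0$; after relabelling the columns $z_1,\dots,z_n$ (which we are free to do, as the final statement is about a determinant being not identically zero), we get $M_{k+1}\not\equiv 0$ as required. Iterating up to $k=n$ produces $f_1,\dots,f_n\in\mathbf{F}$ with $\det\left[\frac{\partial f_i}{\partial z_j}\right]_{i,j=1}^{n}\not\equiv 0$.

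The step I expect to be the main obstacle is making the fiber argument fully rigorous: one must be careful that ``$f$ constant on each local fiber of $g$'' genuinely propagates to ``$f$ constant on a global level set component of the collection $(f_1,\dots,f_k)$,'' so that separation of points is actually contradicted. The clean way is to avoid global fibers altogether and argue infinitesimally: work at a single well-chosen point $z_0\in U$, and show that if the $(k+1)$-minors all vanished identically then, on a neighborhood of $z_0$, every $f\in\mathbf{F}$ would have $df$ lying in the span of $df_1,\dots,df_k$ pointwise, whence $f$ factors locally through the submersion $g$; pulling back, two nearby points on the same $g$-fiber are not separated by any $f\in\mathbf{F}$, contradiction. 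This localization sidesteps connectedness/monodromy issues with the level sets. A secondary technical point is the appeal to the holomorphic constant-rank theorem to describe the fibers of $g$ on $U$ as submanifolds, but this is standard. Note that the hypothesis that $Z$ is a domain (connected, open) is used so that ``non-constant'' forces ``derivative not identically zero'' and so that the open sets where successive minors are nonzero are handled uniformly.
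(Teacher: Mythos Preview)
Your proposal is correct, and the heart of the argument coincides with the paper's: both derive a contradiction by showing that if the gradients $\{\nabla f:f\in\mathbf{F}\}$ span only a $k$-dimensional subspace on an open set where $\nabla f_1,\dots,\nabla f_k$ are independent, then the implicit function theorem produces a positive-dimensional level set of $(f_1,\dots,f_k)$ on which every $f\in\mathbf{F}$ is constant, violating separation of points. Your ``infinitesimal'' fix is precisely what the paper does.

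The organizational difference is that the paper dispenses with the induction: it sets $m=\max_{x\in Z}\dim\operatorname{span}\{\nabla f(x):f\in\mathbf{F}\}$, picks a point $x$ realizing the maximum together with $f_1,\dots,f_m$ whose gradients are independent there, and uses maximality of $m$ to conclude in one stroke that every $\nabla f$ lies in $\operatorname{span}\{\nabla f_1,\dots,\nabla f_m\}$ on a whole neighborhood of $x$. This avoids your step-by-step build-up and the attendant column relabeling (which, as you note, is harmless but slightly awkward to track). Conversely, your inductive version has the minor advantage that one never needs to argue that the maximum is attained or that the span dimension is locally constant near the maximizing point --- you just work on the open set $\{M_k\ne 0\}$ directly. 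Both routes are short and essentially equivalent.
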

\begin{proof}
It is enough to show that there are $x\in Z$ and $f_{1},..., f_{n}\in\mathbf{F} $, such that the vectors $\nabla f_{1}\left(x\right),..., \nabla f_{n}\left(x\right)\in\C^{n}$ are linearly independent. Assume the contrary, i.e. that $m=\max\limits_{x\in Z}\dim \left\{\nabla f\left(x\right)\left|f\in \mathbf{F}\right.\right\}<n$. Let $x\in Z$ be such that $\dim \left\{\nabla f\left(x\right)\left|f\in \mathbf{F}\right.\right\}=m$ and let $f_{1},..., f_{m}\in\mathbf{F} $, be such that $\nabla f_{1}\left(x\right),..., \nabla f_{m}\left(x\right)$ are linearly independent.

Without loss of generality we can assume that $h=\det\left[\frac{\partial f_{i}}{\partial z_{j}}\right]_{i,j=1}^{m}$ does not vanish at $x$. Since $h$ is holomorphic on $X$, it does not vanish on an open connected neighborhood $U$ of $x$, and so $\nabla f_{1}\left(y\right),..., \nabla f_{m}\left(y\right)$ are linearly independent, for every $y\in U$. From the choice of $m$ it follows that $\nabla f\left(y\right)\in\spa\left\{\nabla f_{1}\left(y\right),..., \nabla f_{m}\left(y\right)\right\}$, for any $f\in \mathbf{F}$ and $y\in U$.

From the Implicit Function theorem (see \cite[A2.2, Theorem 1]{chirka}) applied to the vector function $\left(f_{1},...,f_{m}\right)$ there are open sets $V\subset\C^{m}$ and $W\subset\C^{n-m}$, such that $x\in V\times W\subset U$, and a holomorphic map $\varphi:W\to V$ with the following property: if $y=\left(y',y''\right)$, $y'\in V$, $y''\in W$, then $\varphi\left(y''\right)=\left(y'\right)$ if and only if $f_{j}\left(y\right)=f_{j}\left(x\right)$, for every $j\in\overline{1,m}$.

Define $\psi=\varphi\times Id_{W}$, which is a holomorphic injection from $W$ into $V\times W$. Let $D_{z}\psi$ be the Jacobi matrix of $\psi$ at $z\in W$. Then for every $j\in\overline{1,m}$ we have that $f_{j}\circ\psi\equiv f_{j}\left(x\right)$, and in particular, it is a constant map on $W$. Hence, \linebreak  $0_{\C^{n-m}}=\nabla\left( f_{j}\circ\psi\right)\left(z\right)=\nabla f_{j}\left(\psi\left(z\right)\right)D_{z}\psi$, for every $z\in W$.

For every $f\in\mathbf{F}$ we have that $\nabla\left( f\circ\psi\right)\left(z\right)=\nabla f \left(\psi\left(z\right)\right)D_{z}\psi$ is a linear combination of vectors $\left\{\nabla f_{j}\left(\psi\left(z\right)\right)D_{z}\psi\right\}_{j=1}^{m}$, and so $\nabla\left( f\circ\psi\right)\equiv 0_{\C^{n-m}}$ on $W$. Thus, $f\circ\psi\equiv f\left(x\right)$, for every $f\in\mathbf{F} $, and so $\mathbf{F} $ does not separate the points of the image of $\psi$. Since $m<n$ and $\psi$ is an injection, its image contains more than one point, and so we have reached a contradiction.
\end{proof}

\begin{lemma}
Let $\Psi:X\to \C^{d}$ be a local biholomorphism at $x\in X$ and let $\Phi:Y\to X$ be continuous and such that $\Psi\circ\Phi$ is holomorphic. Then $\Phi$ is holomorphic at every $y\in Y$ with $\Phi\left(y\right)=x$.
\end{lemma}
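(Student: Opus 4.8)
The plan is to argue purely locally near $y$: since $\Psi$ is a local biholomorphism at $x=\Phi(y)$, there are open neighborhoods $U$ of $x$ in $X$ and $V$ of $\Psi(x)$ in $\C^d$ such that $\Psi|_U:U\to V$ is a biholomorphism, with a holomorphic inverse $\Psi^{-1}:V\to U$. Because $\Phi$ is continuous and $\Phi(y)=x\in U$, the set $\Phi^{-1}(U)$ is an open neighborhood of $y$ in $Y$, so it suffices to show $\Phi$ is holomorphic on $\Phi^{-1}(U)$.

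The key observation is that on $\Phi^{-1}(U)$ we can write $\Phi$ as a composition of holomorphic maps. Indeed, for $z\in\Phi^{-1}(U)$ we have $\Phi(z)\in U$, hence $\Psi(\Phi(z))\in V$, and applying $\Psi^{-1}$ gives
\[
\Phi(z)=\Psi^{-1}\bigl(\Psi(\Phi(z))\bigr)=\bigl(\Psi^{-1}\circ(\Psi\circ\Phi)\bigr)(z).
\]
Here $\Psi\circ\Phi$ is holomorphic by hypothesis (and maps $\Phi^{-1}(U)$ into $V$), and $\Psi^{-1}:V\to U$ is holomorphic. Therefore $\Phi=\Psi^{-1}\circ(\Psi\circ\Phi)$ is holomorphic on $\Phi^{-1}(U)$, and in particular holomorphic at $y$. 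Since $y$ was an arbitrary point of $Y$ with $\Phi(y)=x$, the statement follows.

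There is essentially no serious obstacle here; the only point requiring a little care is the bookkeeping of domains, i.e. ensuring that the restriction of $\Psi$ to a small enough neighborhood $U$ of $x$ is genuinely invertible with holomorphic inverse (this is the content of the definition of "local biholomorphism at $x$", or equivalently follows from the holomorphic inverse function theorem applied to $\Psi$ at $x$), and that $\Phi^{-1}(U)$ is open, which is immediate from continuity of $\Phi$. Once these neighborhoods are fixed, the identity $\Phi=\Psi^{-1}\circ(\Psi\circ\Phi)$ on $\Phi^{-1}(U)$ is a formal consequence of $\Psi|_U$ being a bijection onto $V$, and holomorphicity is inherited from the composition.
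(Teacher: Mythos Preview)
Your proof is correct and follows essentially the same approach as the paper: both arguments pick a neighborhood of $x$ on which $\Psi$ is biholomorphic, use continuity of $\Phi$ to pull this back to an open neighborhood of $y$, and then write $\Phi$ locally as the composition $\Psi^{-1}\circ(\Psi\circ\Phi)$ of holomorphic maps. The only difference is cosmetic (the paper names the neighborhoods $V\subset X$ and $W\subset\C^d$ rather than $U$ and $V$, and picks an open neighborhood of $y$ inside $\Phi^{-1}(V)$ rather than using $\Phi^{-1}(V)$ itself).
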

\begin{proof}
There is an open neighborhood $V$ of $x$ and open $W\subset\C^{d}$, such that $\Psi\left|_{V}\right.$ is a biholomorphism from $V$ onto $W$. Hence, there is a holomorphic inverse $\Psi\left|_{V}\right.^{-1}$. Let $y\in Y$ be such that $\Phi\left(y\right)=x$. Since $\Phi$ is continuous, there is an open neighborhood $U$ of $x$, such that $\Phi\left(U\right)\subset V$. Then  $\Phi\left|_{U}\right.=\Psi\left|_{V}\right.^{-1}\circ\Psi\left|_{V}\right.\circ\Phi\left|_{U}\right.$ is a composition of holomorphic maps, and so $\Phi$ is holomorphic at $y$.
\end{proof}

\begin{lemma}\label{coh}
Assume that $\mathbf{F}\subset\Ho\left(X\right)$ separates points of $X$. If $\Phi$ is a continuous map from $Y$ into $X$ such that $f\circ\Phi\in \Ho\left(Y\right)$, for every $f\in \mathbf{F}$, then $\Phi$ is holomorphic.
\end{lemma}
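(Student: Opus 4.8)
The plan is to argue by induction on $d=\dim X$, reducing everything to the local model in which $X$ is a domain $Z\subseteq\C^{d}$. The case $d=0$ is trivial ($X$ discrete, $\Phi$ locally constant). For the inductive step, since holomorphy is local I would fix $y_{0}\in Y$, choose a connected holomorphic chart $U$ around $\Phi\left(y_{0}\right)$ and identify it with a domain $Z\subseteq\C^{d}$; replacing $Y$ by $\Phi^{-1}\left(U\right)$, $X$ by $Z$ and $\mathbf{F}$ by $\left\{f|_{U}\left|f\in\mathbf{F}\right.\right\}$ (still point-separating on $Z$), it suffices to show: a continuous $\Phi:Y\to Z$ with $f\circ\Phi\in\Ho\left(Y\right)$ for every $f$ in a point-separating $\mathbf{F}\subseteq\Ho\left(Z\right)$ is holomorphic.

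The engine of the proof is the following. By Lemma \ref{thin1} there are $f_{1},\dots,f_{d}\in\mathbf{F}$ with $h:=\det\left[\partial f_{i}/\partial z_{j}\right]_{i,j=1}^{d}\not\equiv 0$ on $Z$, so $\Psi=\left(f_{1},\dots,f_{d}\right):Z\to\C^{d}$ is a local biholomorphism at every point of $Z\backslash h^{-1}\left(0\right)$, while $\Psi\circ\Phi=\left(f_{1}\circ\Phi,\dots,f_{d}\circ\Phi\right)$ is holomorphic. By the lemma immediately preceding this one, $\Phi$ is then holomorphic on $Y\backslash\Phi^{-1}\left(h^{-1}\left(0\right)\right)$, and Lemma \ref{remrad} applied with the function $h$ yields the dichotomy: either $\Phi$ is holomorphic on $Y$ and we are done, or $\Phi\left(Y\right)$ is contained in the analytic set $h^{-1}\left(0\right)$, of dimension $\le d-1$.

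To finish I would prove, by a secondary induction on $e$, the auxiliary assertion: if $B$ is an analytic subset of a domain in $\C^{d}$ with $\dim B\le e$ and $\Phi\left(Y\right)\subseteq B$ (other hypotheses as above), then $\Phi$ is holomorphic; the conclusion we want is the case $B=Z$, and the dichotomy above is precisely the descent from level $d$ to level $d-1$. For $e=0$, $B$ is discrete and $\Phi$ is locally constant. For the step, work near a point $y_{0}$ with $x_{0}=\Phi\left(y_{0}\right)\in B$. If $x_{0}$ is a regular point of $B$, then on a small neighborhood $B$ is a complex submanifold $M$ of dimension $<d$, $\Phi$ maps a neighborhood of $y_{0}$ into $M$, and the primary inductive hypothesis applied to $M$ with $\left\{f|_{M}\right\}$ (point-separating on $M$) shows $\Phi$ is holomorphic near $y_{0}$. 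If $x_{0}$ is a singular point, I would use prime avoidance in the ring of germs at $x_{0}$ to choose a holomorphic $g$, defined near $x_{0}$, that vanishes on the singular locus of $B$ but identically on no irreducible component of $B$; by the regular-point case $\Phi$ is already holomorphic off $\Phi^{-1}\left(g^{-1}\left(0\right)\right)$, so Lemma \ref{remrad} with $g$ gives either holomorphy of $\Phi$, or $\Phi\left(Y\right)\subseteq B\cap g^{-1}\left(0\right)$; the latter set is analytic of strictly smaller dimension than $B$, so the secondary inductive hypothesis applies.

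The step I expect to be the real obstacle is exactly the confinement alternative. The hypotheses only permit composing $\Phi$ with members of $\mathbf{F}$, not with arbitrary holomorphic functions, and Lemma \ref{thin1} only guarantees these members have nondegenerate joint differential on a dense open set, not everywhere (for instance $\spa\{z^{2},z^{3}\}$ separates points of $\C$, yet every element has vanishing derivative at $0$). Hence one must genuinely iterate the ``$\Phi$ maps into a proper analytic subset'' case, and the delicate part is forcing a dimension drop at each stage: the singular stratum cannot be removed naively, which is why the defining function $g$ must be chosen via prime avoidance, and why Rado's theorem --- entering through Lemma \ref{remrad} --- is indispensable, since it is what converts ``holomorphic off a bad set, continuous everywhere'' into the usable dichotomy.
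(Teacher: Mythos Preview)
Your proof is correct and follows essentially the same route as the paper's: the same engine (Lemma \ref{thin1}, the local-biholomorphism lemma, and Lemma \ref{remrad}) drives a dimension descent through analytic sets, with the regular and singular strata handled separately. The only cosmetic differences are that the paper organizes this as a single induction on the dimension $m$ of the analytic set containing $\Phi(Y)$ (reapplying the engine directly in local coordinates of each $m$-dimensional regular stratum, rather than packaging that step as an outer induction on $\dim X$), and at singular points it feeds the several local defining functions $g_{1},\dots,g_{l}$ of the singular locus into Lemma \ref{remrad} one at a time instead of invoking prime avoidance to produce a single $g$.
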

\begin{proof}
We will introduce an additional assumption: namely we will assume that for $m\in\overline{0,d}$ there is an analytic set (see \cite[Chapter 1, 2.1, Definition 1]{chirka}) $Z$ in $X$, with $\Phi\left(Y\right)\subset Z$ and $\dim Z=m$. Note that a complex manifold is an analytic set in itself, and so this assumption is a tautology for $m=d$. The proof is done via induction over $m$. In the case when $m=0$, $Z$ is discrete, and since $Y$ is connected, and $\Phi$ is continuous, it follows that $\Phi\left(Y\right)$ is a connected subset of a discrete set. Hence, $\Phi\left(Y\right)$ is a singleton, and so $\Phi$ is a constant map, which is holomorphic.

Assume that the statement is proven for $0,...,m-1$. We will show that then it is also true for $m$. Let $x$ be a regular point of $Z$ of dimension $m$, i.e. there is a neighborhood $U$ of $x$ in $X$, such that $U\cap Z$ is an embedded complex submanifold of $X$. Let $V$ be a coordinate neighborhood of $U\cap Z$ at $x$. Then $V$ is an open set in $Z$, and since $\Phi\left(Y\right)\subset Z$, it follows that $\Phi^{-1}\left(V\right)$ is open in $Y$. Let $W$ be an (open) connected component of $\Phi^{-1}\left(V\right)$.

Since $\mathbf{F}$ separates points of $V$, using Lemma \ref{thin1} one can show that there are $f_{1},..., f_{m}\in\mathbf{F} $, such that $h=\det\left[\frac{\partial f_{i}}{\partial z_{j}}\right]_{i,j=1}^{m}\not\equiv 0$, where $\left(z_{1},...,z_{m}\right)$ are local coordinates on $V$. Let $Z_{h}$ be the zero-set of $h$, which is an analytic subset of $V$ of dimension at most $m-1$. At each $z\in V\backslash Z_{h}$ the vector function $\left(f_{1},...,f_{m}\right)$ is a local biholomorphism, and so from the preceding lemma $\Phi$ is holomorphic from $\Phi\left|_{W}\right.^{-1}\left(V\backslash Z_{h}\right)$ into $V$, but since the inclusion map from $V$ into $X$ is holomorphic, $\Phi$ is holomorphic from $W\backslash\Phi^{-1}\left( Z_{h}\right)$ into $X$. Hence, from Lemma \ref{remrad}, either $\Phi\left|_{W}\right.$ is holomorphic, or $\Phi\left(W\right)\subset Z_{h}$. However, the last condition implies that $\Phi\left|_{W}\right.$ is holomorphic, due to the assumption of induction applied to $V$, $W$, $\Phi\left|_{W}\right.$ and $Z_{h}$.

Thus, we have proven that $\Phi$ is holomorphic on $\Phi^{-1}\left(Z'\right)$, where $Z'$ is the open subset of $Z$, which consists of all regular points of $Z$ of dimension $m$. The set $Z''=Z\backslash Z'$ is an analytic subset of $X$ of dimension at most $m-1$ (see \cite[Chapter 1, 5.2, Theorem 2]{chirka}). Hence, for any $x\in Z''$ there are an open neighborhood $U$ of $x$ in $X$ and $g_{1},...,g_{l}\in \Ho\left(U\right)$, such that $Z''\cap U=\left\{z\in U\left|g_{1}\left(z\right)=...=g_{l}\left(z\right)=0\right.\right\}$. Let $W$ be an (open) connected component of $\Phi^{-1}\left(U\right)$. Applying Lemma \ref{remrad} to $U$, $W$, $\Phi\left|_{W}\right.$ and each of $g_{k}$ we get that either $\Phi\left|_{W}\right.$ is holomorphic, or $\Phi\left(W\right)\subset Z''$. But the last condition implies that $\Phi\left|_{W}\right.$ is holomorphic, due to the assumption of induction applied to $W$, $X$, $\Phi\left|_{W}\right.$ and $Z''$. Since $x$ was chosen arbitrarily, we get that $\Phi$ is holomorphic at the points of $\Phi^{-1}\left(Z''\right)$, and combining this with the earlier assertion, we conclude that $\Phi$ is holomorphic.
\end{proof}

\begin{proof}[Proof of Theorem \ref{hcwco}]
Part (i) follows directly from the preceding lemma. Let us prove (ii). Let $\mathbf{F}$ be a NSHF over complex manifolds $X$, such that $\mathbf{F}$ is $2$-independent. Let $Y$ be a complex manifold, let $\Phi:Y\to X$ and $\omega:Y\to\C$ be continuous and such that $W_{\Phi,\omega}\mathbf{F}\subset \Ho\left(Y\right)$. If $\omega\equiv 0$, then $W_{\Phi,\omega}$ is trivially a WCO$_{\Ho}$, and so we will assume that $\omega\not\equiv 0$. Then the set $Z=Y\backslash\omega^{-1}\left(0\right)$ is open and non-empty.

Since $\mathbf{F}$ is $2$-independent, it follows that $\widetilde{\mathbf{F}}$ is a collection of holomorphic functions on $\Cp\times X$, which separates points (see Remark \ref{wcoco}). Also, $C_{\omega\left|_{Z}\right.\times \Phi\left|_{Z}\right.}=W_{\Phi\left|_{Z}\right.,\omega\left|_{Z}\right.}$, and  $\omega\left|_{Z}\right.\times \Phi\left|_{Z}\right.$ is a continuous map from $Z$ into $\Cp\times X$. Since $W_{\Phi\left|_{Z}\right.,\omega\left|_{Z}\right.}f=\left(W_{\Phi,\omega}f\right)\left|_{Z}\right.$, for any $f:X\to \C$, it follows that $C_{\omega\left|_{Z}\right.\times \Phi\left|_{Z}\right.}\widetilde{\mathbf{F}}\subset \Ho\left(Z\right)$, and so from the preceding lemma we conclude that $\omega\left|_{Z}\right.$ and $ \Phi\left|_{Z}\right.$ are holomorphic. Since $Y\backslash Z=\omega^{-1}\left(0\right)$ is thin due to part (ii) of Proposition \ref{recmh}, and $\Phi$ and $\omega$ are continuous, they are in fact holomorphic by Removable Singularity theorem.
\end{proof}

Combining Theorem \ref{hcwco} with the results of the previous section (Corollary \ref{recm} and Theorem \ref{ncwco} respectively) we obtain the following result.

\begin{theorem}\label{hwco}
\item[(i)] Any continuous CO from a NSHF that generates the topology of its phase space into another NSHF is a CO$_{\Ho}$.
\item[(ii)] Any continuous WCO with a non-vanishing multiplicative symbol from a \linebreak $2$-independent NSHF that generates the topology of its phase space and contains a non-zero constant function, into another NSHF is a WCO$_{\Ho}$.
\item[(iii)] Let $\mathbf{F}$ be a $2$-independent NSHF over a $X$, such that the bounded functions form a dense set in $\mathbf{F}$, and $\lim\limits_{x\to\8}\left|\kappa\right|_{\mathbf{F}}\left(x\right)=+\8$. Then any continuous WCO with a non-vanishing multiplicative symbol from $\mathbf{F}$ into another $2$-independent NSHF is a WCO$_{\Ho}$.
\end{theorem}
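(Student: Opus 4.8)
The plan is to prove all three parts by the same two-step mechanism: first invoke a recognition result from Section~\ref{recs} to conclude that the symbols $\Phi$ and $\omega$ are \emph{continuous}, and then invoke Theorem~\ref{hcwco} to upgrade continuity to holomorphicity. In each part the hypotheses on $\mathbf{F}$ (generating the topology of its phase space, or the norm-growth and density conditions) are exactly what the continuous-setting statement needs, while the $2$-independence of $\mathbf{F}$ (or, for part (i), mere point-separation of $\mathbf{F}$) is exactly what Theorem~\ref{hcwco} needs for the upgrade.

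For part (i) I would first observe that a NSHF $\mathbf{F}$ generating the topology of $X$ has $\kappa_{\mathbf{F}}$ a topological embedding by part (i) of Proposition~\ref{topemb}, hence $\kappa_{\mathbf{F}}$ is injective and $\mathbf{F}$ separates points of $X$. Given a continuous CO $C_{\Phi}$ from $\mathbf{F}$ into the NSHF $\mathbf{E}$, part (ii) of Corollary~\ref{recm} shows $\Phi$ is continuous, i.e.\ $C_{\Phi}$ is a CO$_{\Co}$; then part (i) of Theorem~\ref{hcwco} shows $C_{\Phi}$ is a CO$_{\Ho}$. For part (ii): a non-zero constant function in the linear space $\mathbf{F}$ forces $1\in\mathbf{F}$, so part (iii) of Corollary~\ref{recm} shows any continuous WCO from $\mathbf{F}$ with non-vanishing multiplicative symbol is a WCO$_{\Co}$, i.e.\ both $\Phi$ and $\omega$ are continuous; since $\mathbf{F}$ is $2$-independent, part (ii) of Theorem~\ref{hcwco} then yields that $W_{\Phi,\omega}$ is a WCO$_{\Ho}$.

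Part (iii) needs one preliminary remark before the same scheme runs. Since $\mathbf{F}$ and $\mathbf{E}$ are NSHF's, Montel's theorem makes them compactly embedded, so by the observation following Proposition~\ref{dualstandard} both $\left|\kappa\right|_{\mathbf{F}}$ and $\left|\kappa\right|_{\mathbf{E}}$ are continuous; and because the bounded functions are dense in $\mathbf{F}$ and $\lim\limits_{x\to\8}\left|\kappa\right|_{\mathbf{F}}\left(x\right)=+\8$, the argument of Remark~\ref{ncwco} gives $\frac{1}{\left|\kappa\right|_{\mathbf{F}}}\mathbf{F}\subset\Co_{0}\left(X\right)$. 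Thus $\mathbf{F}$ and $\mathbf{E}$ satisfy all the hypotheses of Theorem~\ref{cwcon}, and for a continuous WCO $W_{\Phi,\omega}\in\Lo\left(\mathbf{F},\mathbf{E}\right)$ that theorem gives continuity of $\omega$ and $\Phi$ outside $\omega^{-1}\left(0\right)$; since $\omega$ is non-vanishing, $\omega^{-1}\left(0\right)=\varnothing$, so $\omega$ and $\Phi$ are continuous on all of $Y$, $W_{\Phi,\omega}$ is a WCO$_{\Co}$, and part (ii) of Theorem~\ref{hcwco} (using $2$-independence of $\mathbf{F}$) completes the proof.

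I expect no genuinely hard step: the substance is all in the earlier results, and the argument is essentially a matching of hypotheses. The one place that will call for a moment's care is part (iii), where one must notice that the continuity of $\left|\kappa\right|_{\mathbf{E}}$ demanded by Theorem~\ref{cwcon} is automatic for a NSHF via Montel's theorem, and that the stated norm-growth and density conditions on $\mathbf{F}$ are precisely the package that Remark~\ref{ncwco} converts into the condition $\frac{1}{\left|\kappa\right|_{\mathbf{F}}}\mathbf{F}\subset\Co_{0}\left(X\right)$ used there.
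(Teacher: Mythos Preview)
Your proposal is correct and follows essentially the same route as the paper, which simply records the theorem as the combination of Theorem~\ref{hcwco} with Corollary~\ref{recm} (for (i) and (ii)) and with Theorem~\ref{cwcon} via Remark~\ref{ncwco} (for (iii)). You have spelled out the matching of hypotheses in more detail than the paper does, including the observation that generating the topology implies separation of points and that NSHF's are automatically compactly embedded, but the underlying argument is the same.
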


Although the conditions of part (ii) of the preceding theorem seem to be less restrictive than the conditions of the part (iii), the latter has its own advantages. Let us remind the reader that it can be difficult to verify if a NSHF generates the topology of its phase space, and that there are spaces of interest that do not contain non-zero constant functions. For example, a concrete family of deBranges-Rovnyak spaces not containing constants is studied in \cite{jury}.\medskip

Finally consider a variation of Theorem \ref{hwco} for bounded domains in $\C^{d}$.

\begin{proposition}\label{bd}
Let $\mathbf{F}$ be a $2$-independent NSHF over a bounded domain $X\subset \C^{d}$. Then any continuous WCO from $\mathbf{F}$ into another $2$-independent NSHF is a WCO$_{\Ho}$, provided that one of the following conditions holds:
\item[(i)] $1\in \mathbf{F}$ and there is a finite set $Z$, such that $\mathbf{F}$ generates the topology of $X\backslash Z$.
\item[(ii)] Bounded functions form a dense set in $\mathbf{F}$, and $\lim\limits_{x\to\8}\left|\kappa\right|_{\mathbf{F}}\left(x\right)=+\8$.
\end{proposition}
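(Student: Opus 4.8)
The aim in both parts is to produce holomorphic maps $\Phi':Y\to X$ and $\omega':Y\to\C$ with $W_{\Phi,\omega}=W_{\Phi',\omega'}$; then $W_{\Phi,\omega}$ is a WCO$_{\Ho}$ by definition. We may assume $\omega\not\equiv 0$, since otherwise $W_{\Phi,\omega}=0=W_{p,0}$ for any point $p\in X$. By part (ii) of Proposition \ref{recmh} the set $\omega^{-1}(0)$ is then thin. In case (i), $1\in\mathbf{F}$ gives $\omega=W_{\Phi,\omega}1\in\mathbf{E}\subset\Ho(Y)$, so we may take $\omega'=\omega$; moreover, by part (iii) of Proposition \ref{recmh}, for each $x\in Z$ either $\Phi^{-1}(x)$ is thin or $Y=\Phi^{-1}(x)\cup\omega^{-1}(0)$, and in the latter case $\Phi\equiv x$ off $\omega^{-1}(0)$, whence $W_{\Phi,\omega}=W_{x,\omega}$ has the constant symbol $x$ and we are done. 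So in case (i) we may assume $\Phi^{-1}(Z)=\bigcup_{x\in Z}\Phi^{-1}(x)$ is thin; in case (ii) put $Z=\varnothing$. Set $V=Y\setminus\omega^{-1}(0)$ and $V''=Y\setminus\bigl(\omega^{-1}(0)\cup\Phi^{-1}(Z)\bigr)$; both are complements of thin sets, hence open, dense and connected, and $\Phi(V'')\subset X\setminus Z$.

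On $V''$ the operator $W_{\Phi|_{V''},\,\omega|_{V''}}$ is a continuous WCO with non-vanishing weight from $\mathbf{F}|_{X\setminus Z}$ (a $2$-independent NSHF over the bounded domain $X\setminus Z$) into $\mathbf{E}|_{V''}$ (a $2$-independent NSHF over the manifold $V''$). In case (i), $\mathbf{F}|_{X\setminus Z}$ generates the topology of $X\setminus Z$ and contains $1$, so by part (iii) of Corollary \ref{recm} this operator is a WCO$_{\Co}$, and then a WCO$_{\Ho}$ by part (ii) of Theorem \ref{hcwco}; in case (ii) part (iii) of Theorem \ref{hwco} gives this directly. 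Since the weight is non-vanishing, the symbols are uniquely determined, so $\Phi|_{V''}:V''\to X\setminus Z$ and $\omega|_{V''}$ are holomorphic. In case (ii), $|\kappa|_{\mathbf{F}}$ is continuous (NSHF's are compactly embedded, by Montel), positive (by $2$-independence) and $\to+\infty$, hence bounded below by some $c>0$; from $|\omega(y)|\,|\kappa|_{\mathbf{F}}(\Phi(y))=\|W_{\Phi,\omega}^{*}\kappa_{\mathbf{E}}(y)\|\le\|W_{\Phi,\omega}\|\,|\kappa|_{\mathbf{E}}(y)$ and the local boundedness of $|\kappa|_{\mathbf{E}}$ we see that $\omega|_{V''}$ is locally bounded near $\omega^{-1}(0)$, so by the Removable Singularity theorem it extends to a holomorphic $\omega':Y\to\C$. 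Finally, as $X$ is bounded the coordinate components of $\Phi|_{V''}$ are bounded holomorphic functions on $V''$ and $Y\setminus V''$ is thin, so the Removable Singularity theorem extends $\Phi|_{V''}$ to a holomorphic $\Phi':Y\to\C^{d}$; since $V''$ is dense, $\Phi'(Y)\subset\overline{\Phi(V'')}\subset\overline{X}$.

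It remains — and this is the crux — to show $\Phi'(Y)\subset X$. Granting this, $\omega'\cdot(f\circ\Phi')$ is holomorphic on $Y$ and agrees with $W_{\Phi,\omega}f$ on the dense connected set $V''$, hence on all of $Y$, so $W_{\Phi,\omega}=W_{\Phi',\omega'}$. Suppose $y_{0}\in T:=\Phi'^{-1}(\partial X)$, which is closed and, being disjoint from $V''$, contained in the thin set $\omega^{-1}(0)\cup\Phi^{-1}(Z)$; put $p_{0}=\Phi'(y_{0})\in\partial X$. Choose a holomorphic disc $\gamma$ with $\gamma(0)=y_{0}$ — a generic complex line through $y_{0}$ in a chart — such that along $\gamma$ neither $\omega'$ nor a defining function of $Y\setminus V''$ vanishes identically, so that for small $z\ne 0$ we have $\gamma(z)\in V''$; then $\psi:=\Phi'\circ\gamma$ satisfies $\psi(0)=p_{0}\in\partial X$ and $\psi(z)\in X$ for small $z\ne 0$. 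The map $\lambda=W_{\Phi,\omega}^{*}\circ\kappa_{\mathbf{E}}:Y\to\mathbf{F}^{*}$ is holomorphic (as $\kappa_{\mathbf{E}}$ is weak$^{*}$-holomorphic and locally bounded, and $W_{\Phi,\omega}^{*}$ is bounded), $\lambda(\gamma(z))=(\omega'\circ\gamma)(z)\,\psi(z)_{\mathbf{F}}$ for small $z\ne 0$, and for $f\in\mathbf{F}$ the function $z\mapsto f(\psi(z))=\langle f,\lambda(\gamma(z))\rangle/(\omega'\circ\gamma)(z)$ is meromorphic near $0$. In case (ii), $\psi(z)\to p_{0}\in\partial X$ forces $\|\psi(z)_{\mathbf{F}}\|=|\kappa|_{\mathbf{F}}(\psi(z))\to+\infty$; writing $\omega'\circ\gamma=z^{m}v(z)$ with $v(0)\ne 0$ and $\lambda\circ\gamma=z^{k}L(z)$ with $L(0)\ne 0$ (finite $m,k\ge 0$), this forces $m>k$, and then $M(z):=z^{m-k}\psi(z)_{\mathbf{F}}=L(z)/v(z)$ is holomorphic with $M(0)\ne 0$. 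But for any bounded $g\in\mathbf{F}$ we have $\langle g,M(0)\rangle=\lim_{z\to 0}z^{m-k}g(\psi(z))=0$, because $g\circ\psi$ is bounded; since bounded functions are dense in $\mathbf{F}$, $M(0)=0$, a contradiction. Hence $T=\varnothing$ and we are done in case (ii).

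In case (i) one argues similarly after replacing $\mathbf{F}$ by $\mathbf{F}^{+}:=\mathbf{F}+\spa\{z_{1},\dots,z_{d}\}$, which is a $2$-independent NSHF over $X$ that contains $1$ and generates the topology of all of $X$: the coordinate functions $z_{j}$ lie in $\mathbf{F}^{+}$ and $z_{j}\circ\psi=\psi_{j}$ is holomorphic at $0$ with value $(p_{0})_{j}$, while $\langle 1,\psi(z)_{\mathbf{F}^{+}}\rangle\equiv 1$; feeding these into the relation $z^{m-k}\psi(z)_{\mathbf{F}^{+}}=M^{+}(z)$ (with $M^{+}(0)\ne 0$) and using that $\kappa_{\mathbf{F}^{+}}$ is a topological embedding of $X$ together with the finiteness of $Z$ leads to a contradiction with $p_{0}\in\partial X$. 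This last step — establishing $\Phi'(Y)\subset X$, and especially its case (i), where one cannot invoke the blow-up of $|\kappa|_{\mathbf{F}}$ at the boundary — is the delicate point; everything preceding it is bookkeeping on top of Proposition \ref{recmh}, Corollary \ref{recm}, Theorems \ref{hwco} and \ref{hcwco}, and the Removable Singularity theorem.
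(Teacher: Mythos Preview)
Your overall route coincides with the paper's: dispose of $\omega\equiv 0$, use Proposition \ref{recmh} to see that the exceptional set $G=\omega^{-1}(0)\cup\Phi^{-1}(Z)$ (with $Z=\varnothing$ in case (ii)) is thin unless $\Phi$ is essentially constant, invoke Theorem \ref{hwco} on $Y\setminus G$ to make the symbols holomorphic there, and extend $\Phi$ across $G$ by the Removable Singularity theorem using the boundedness of $X$. One incidental difference: in case (ii) the paper recovers the holomorphicity of $\omega$ \emph{a posteriori}, via part (i) of Proposition \ref{recmh} applied to $W_{\Psi,\omega}$ once $\Psi$ is holomorphic into $X$; this is slightly cleaner than your separate removable-singularity extension of $\omega$, but either works.

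Where you go beyond the paper is in isolating the inclusion $\Phi'(Y)\subset X$ (as opposed to merely $\overline X$) as ``the crux''. The paper simply writes ``there is a holomorphic map $\Psi:Y\to X$'' and does not argue this. Your case (ii) argument is correct and fills this gap: on a generic disc through a putative bad point, the hypothesis $|\kappa|_{\mathbf{F}}\to+\infty$ forces $m>k$ in the local factorizations $\omega'\circ\gamma=z^{m}v$, $\lambda\circ\gamma=z^{k}L$, and then $M(0)=L(0)/v(0)\ne 0$ is annihilated by the dense set of bounded functions, a contradiction.

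Your case (i) sketch, however, does not close, and this is a genuine obstruction rather than a presentational one. Passing to $\mathbf{F}^{+}=\mathbf{F}+\spa\{z_{1},\dots,z_{d}\}$ gives no substitute for the blow-up of $\|\psi(z)_{\mathbf{F}}\|$: vanishing of $M^{+}(0)$ on $1,z_{1},\dots,z_{d}$ does not force $M^{+}(0)=0$, and you never say what contradiction actually arises. In fact the step fails in general. Take $X=\D^{2}\setminus([0,1)\times\{0\})$; the slit is removable for holomorphic functions (Cauchy's formula over $|w|=\delta$ kills the negative Laurent modes), so $\Ho(X)\cong\Ho(\D^{2})$, and $\mathbf{F}=A^{2}(\D^{2})|_{X}$ is a $2$-independent NSHF containing $1,z_{1},z_{2}$, hence generating the topology of $X$ (so (i) holds with $Z=\varnothing$). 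With $Y=\D$, $\omega(y)=y$, $\Phi(y)=(1/2,y)$ for $y\ne 0$ and $\Phi(0)$ any point of $X$, one checks that $W_{\Phi,\omega}$ is bounded from $\mathbf{F}$ into $A^{2}(\D)$; the unique holomorphic extension is $\Psi(y)=(1/2,y)$, with $\Psi(0)=(1/2,0)\in\partial X$, and $2$-independence forces any holomorphic $\Phi':\D\to X$ realizing the same operator to coincide with $\Psi$. Thus the assertion ``$\Psi:Y\to X$'' that both you and the paper need is not available in case (i) without an extra hypothesis on $X$ (for instance, that $\partial X$ contains no removable piece, as would follow if $X$ were a domain of holomorphy).
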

\begin{proof}
Let $\mathbf{E}$ be a $2$-independent NSHF over a complex manifold $Y$. Let $\omega:Y\to\F$ and $\Phi:Y\to X$ be such that $W_{\Phi,\omega}\in\Lo\left(\mathbf{F},\mathbf{E}\right)$. We may assume $\omega\not\equiv 0$.

Suppose that (i) holds. Since $1\in \mathbf{F}$, then $\omega$ is holomorphic. From Proposition \ref{recmh} and part (iii) of Proposition \ref{recm}, it follows that either   $G=\omega^{-1}\left(0\right)\cup\Phi^{-1}\left(Z\right)$ is thin and closed, or $W_{\Phi,\omega}=W_{\Phi_{z},\omega}$, where $z\in Z$ and $\Phi_{z}:Y\to X$ is defined by $\Phi_{z}\left(y\right)=z$. In the latter case $W_{\Phi,\omega}$ is obviously a WCO$_{\Ho}$, and so we are left with the former case. From part (ii) of Theorem \ref{hwco} applied to restrictions of $\mathbf{F}$ on $X\backslash Z$ and $\mathbf{E}$ on $Y\backslash G$ we get that $\Phi$ is holomorphic on $Y\backslash G$. Note that $\Phi$ is bounded, since $\Phi\left(Y\right)\subset X$. Hence, by Removable Singularity theorem there is a holomorphic map $\Psi:Y\to X$, which coincides with $\Phi$ on $Y\backslash G$. We will show that $W_{\Phi,\omega}=W_{\Psi,\omega}$. Indeed, for every $f\in \mathbf{F}$ the functions $W_{\Phi,\omega}f$ and $W_{\Psi,\omega}f$ are both holomorphic and coincide on a dense set $Y\backslash G$. Hence, $W_{\Phi,\omega}f=W_{\Psi,\omega}f$, and so $W_{\Phi,\omega}=W_{\Psi,\omega}$ is a WCO$_{\Ho}$.

Analogously, if (ii) holds, it follows from part (iii) of Theorem \ref{hwco}, that both $\Phi$ and $\omega$ are holomorphic outside $\omega^{-1}\left(0\right)$, which is a thin closed set due to part (ii) of Proposition \ref{recmh} and part (iii) of Proposition \ref{recm}. Again, there is a holomorphic map $\Psi:Y\to X$, which coincides with $\Phi$ on $Y\backslash \omega^{-1}\left(0\right)$, and so $W_{\Phi,\omega}=W_{\Psi,\omega}$. But then $\omega$ is also holomorphic due to part (i) of Proposition \ref{recmh}. Thus, $W_{\Phi,\omega}=W_{\Psi,\omega}$ is a WCO$_{\Ho}$.
\end{proof}

Consider the following examples.

\begin{example}\label{discalgebra}
Let $\mathbf{E}=\left\{f\in\Co\left(\overline{\D}\right),~f\left|_{\D}\right.\in\Ho\left(\D\right),~f\left(0\right)=f\left(1\right)\right\}$, endowed with the supremum norm. In fact,  $\mathbf{E}$ is a closed subspace of the disk algebra $\mathcal{A}\left(\D\right)$ of co-dimension $1$, and in particular is a Banach space. Consider the restriction $\mathbf{F}$ of $\mathbf{E}$ on $\D$, which is a (complete) NSHF. Using the polynomials in $\mathbf{F}$, one can show that any finite collection of point evaluations on $\mathbf{F}$ is linearly independent, and for every $x\in X$ there is $f\in\mathbf{F}$, which is a local homeomorphism at $x$. However, the topology generated by $\mathbf{F}$ is the topology of a ``folded'' disk, tangent to itself at $0$ by the ``end'' that approaches $1$. In particular the sequence $\left\{1-\frac{1}{n}\right\}_{n=1}^{\8}$ converges to $0$ in this topology, and so it is not the original topology of $\D$.

Nevertheless, $\mathbf{F}$ contains $1$ and generates the topology of $\D\backslash\left\{0\right\}$, and so, by part (i) of the preceding proposition, every continuous WCO from $\mathbf{F}$ into another $2$-independent NSHF is a WCO$_{\Ho}$.\qed
\end{example}

\begin{example}
Let $H^{2}$ be the Hardy space over the unit disk and let \linebreak $\mathbf{F}=\left\{f\in H^{2}\left|f'\left(0\right)=0\right.\right\}$. This is a RKHS over $\D$ with kernel $\frac{1-z\overline{w}+z^{2}\overline{w}^{2}}{1-z\overline{w}}$. Since the identity function does not belong to $\mathbf{F}$ it is not immediately clear if this space generates the topology of $\D$. However, using the polynomials, one can easily show that $\mathbf{F}$ is $2$-independent and that the bounded functions are dense in $\mathbf{F}$. Also, $\lim\limits_{z\to\8}\left|\kappa\right|_{\mathbf{F}}\left(z\right)=\lim\limits_{r\to 1}\sqrt{\frac{1-r^{2}+r^{4}}{1-r^{2}}}=+\8$, and so $\mathbf{F}$ satisfies the conditions of part (ii) of the preceding proposition. Thus, any continuous WCO from $\mathbf{F}$ into another $2$-independent NSHF is a WCO$_{\Ho}$.\qed
\end{example}

\begin{remark} We conclude this section with few more comments.
\begin{itemize}
\item It would be desirable to refine part (i) of Proposition \ref{bd}. Namely one can ask if it is still true if we allow $Z$ to be locally finite in $X$ or thin, or if this requirement is superfluous altogether.
\item In fact, we don't have any example of a continuous WCO between $2$-independent NSHF's, which is not a WCO$_{\Ho}$ (WCO$_{\Co}$). In particular we could not adapt the counterexamples from the preceding sections to the holomorphic case.
\item It is possible to restate some of the results of this section as a holomorphic version of Proposition \ref{te1}.
\item In Theorem \ref{hcwco}, parts (i) and (ii) of Theorem \ref{hwco} and part (i) of Proposition \ref{bd}, the norm of NSHF's is irrelevant, and we could operate with linear subspaces of $\Ho\left(X\right)$ with no additional topological structure.
\item Another approach to recognition of WCO's can be found in \cite{mr}. Somewhat related problems to Proposition \ref{nin} were considered in \cite{bourdon} in the holomorphic setting.
\qed
\end{itemize}
\end{remark}

\section{Proof of Theorem \ref{te}}\label{tes}

Let $E$ be a topological vector space, with the operation of scalar multiplication given by the map $\mu:\F\times E\to E$, i.e. $\mu\left(a,e\right)= a e$, for $a\in\F$ and $e\in E$. From the definition of a TVS, $\mu$ is continuous and in this section we will establish further topological properties of $\mu$.

Recall that a set $B\subset E$ is called balanced if $a B\subset B$, for each $a\in\F$, such that $\left|a\right|\le 1$; in particular, $0_{E}\in B$. If in this case $e\in E$ is such that $re\in B$, for some $r\in\F$, then $\overline{B}_{\F}\left(r\right)e\subset B$.

We will need the following basic property of $E$ (see \cite[Theorem 4.3.6]{bn}): for any open neighborhood $U$ of $0_{E}$ there is a balanced open set $V$, such that $0_{E}\subset V\subset\overline{V}\subset U$.

The following lemma shows that $\mu$ is an ``almost'' closed map.

\begin{proposition}\label{te2} Let $K\subset E$ and $A\subset\F$ be closed. Then $AK$ is closed, provided that one of the following conditions is satisfied:
\item [(i)] $A$ is bounded and $0\not\in A$;
\item [(ii)] $0_{E}\not\in K$ and $0\not\in A$;
\item [(iii)] $K$ is bounded and $0_{E}\not\in K$;
\item [(iv)] $K$ is bounded and $A$ is bounded.
\end{proposition}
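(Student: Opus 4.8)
The plan is to argue with nets. Since $AK=\mu\left(A\times K\right)$ is empty — hence closed — unless both $A$ and $K$ are nonempty, I assume the latter and fix a net $\left(x_{\alpha}\right)$ in $AK$ with $x_{\alpha}\to x\in E$, writing $x_{\alpha}=a_{\alpha}k_{\alpha}$ with $a_{\alpha}\in A$ and $k_{\alpha}\in K$; the goal is to produce $a\in A$ and $k\in K$ with $x=ak$. Two elementary facts underpin everything. First, every net in the one-point compactification $\F\cup\left\{\8\right\}$ has a convergent subnet, so after passing to a subnet we may assume $a_{\alpha}\to\ell\in\F\cup\left\{\8\right\}$. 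Second, if $S\subset E$ is bounded and $b_{\alpha}\to 0$ in $\F$ with $s_{\alpha}\in S$, then $b_{\alpha}s_{\alpha}\to 0_{E}$: given a balanced neighborhood $U$ of $0_{E}$ (recalled to exist above), pick $t>0$ with $S\subset tU$ and note $b_{\alpha}s_{\alpha}=\left(tb_{\alpha}\right)\left(t^{-1}s_{\alpha}\right)\in\overline{B}_{\F}\left(1\right)U\subset U$ once $\left|b_{\alpha}\right|\le t^{-1}$.

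The first real step is to rule out $\ell=\8$. This is only an issue when $A$ is unbounded, hence only under hypotheses (ii) and (iii), where $0_{E}\notin K$. If $\left|a_{\alpha}\right|\to\8$ then $a_{\alpha}^{-1}\to 0$ in $\F$, so by continuity of $\mu$ we get $k_{\alpha}=a_{\alpha}^{-1}x_{\alpha}=\mu\left(a_{\alpha}^{-1},x_{\alpha}\right)\to\mu\left(0,x\right)=0_{E}$; since $K$ is closed this forces $0_{E}\in K$, a contradiction. Hence $\ell=a\in\F$, and since $A$ is closed, $a\in A$.

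The second step splits on whether $a\neq 0$. If $a\neq 0$, then $a_{\alpha}\neq 0$ eventually, $a_{\alpha}^{-1}\to a^{-1}$, and joint continuity of $\mu$ gives $k_{\alpha}=a_{\alpha}^{-1}x_{\alpha}\to a^{-1}x$; as $K$ is closed, $a^{-1}x\in K$, whence $x=a\left(a^{-1}x\right)\in AK$. If $a=0$, then under (i) and (ii) we would have $0=a\in A$, contradicting $0\notin A$, so in fact $a\neq 0$ always in those cases and the previous line applies; under (iii) and (iv) the set $K$ is bounded, so the second fact above yields $x_{\alpha}=a_{\alpha}k_{\alpha}\to 0_{E}$, hence $x=0_{E}$ by Hausdorffness, and since $0=a\in A$ and $K\neq\varnothing$ we obtain $x=0_{E}=0\cdot k\in AK$ for any $k\in K$. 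This exhausts all four cases.

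I do not expect a genuine obstacle here; the result is essentially bookkeeping, the point being that in each of the four listed situations one of the alternatives ``$A$ bounded or $0_{E}\notin K$'' and ``$0\notin A$ or $K$ bounded'' is available — the former keeps the scalars $a_{\alpha}$ away from $\8$, the latter disposes of the degenerate limit $a=0$. The only things demanding a little care are the consistent use of nets rather than sequences, since $E$ is not assumed metrizable, and checking that the subnets extracted above still converge to $x$.
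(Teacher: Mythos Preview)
Your net argument is correct and handles all four cases cleanly. It differs genuinely from the paper's proof, which avoids nets entirely. The paper first settles case~(i) by quoting a Bourbaki result on continuous actions of compact groups (here $A\subset\Fp$ is compact, acting on $E$), and then treats (ii)--(iv) by showing directly that every point $e\notin AK$ has a neighborhood disjoint from $AK$: using balanced neighborhoods it finds, for each $e$, a radius $r$ such that $e$ is separated from $\left(\F\setminus B_{\F}(r)\right)K$ when $0_{E}\notin K$, and from $\overline{B}_{\F}(r)K$ when $K$ is bounded; the remaining ``middle'' annulus of scalars is then handled by the already-proved case~(i). Your approach trades this decomposition for a single pass to a subnet in the one-point compactification $\F\cup\{\infty\}$, which is more elementary (no external citation needed) and makes the logical structure of the four hypotheses transparent --- exactly as you summarize, one alternative blocks $\ell=\infty$ and the other disposes of $\ell=0$. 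The paper's route, on the other hand, never invokes nets or subnet extraction and so sidesteps any worry about metrizability from the outset.
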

\begin{proof}
\textbf{Step 1.} First, observe that (i) implies that $A$ is compact in a topological group $\Fp$ that acts on $E$. Since $\mu\left|_{\Fp\times E}\right.$ is the action, which is continuous, it follows that in this case the set $AK=\mu\left|_{\Fp\times E}\right.\left(A\times K\right)$ is closed (see \cite[III.4.1, Corollary]{bourbaki}).

\textbf{Step 2.} Assume $0_{E}\not\in K$. Since $E\backslash K$ is an open neighborhood of $0_{E}$, there exists a balanced open neighborhood $V\subset E\backslash K$ of $0_{E}$. Since $V$ is balanced, if $V\cap a K\ne \varnothing$, then $\left|a\right| <1$. For any $e\in E$ there is $r>0$ such that $e\in rV$. Hence $rV$ is an open neighborhood of $e$ disjoint from $\left(\F\backslash B_{\F}\left(r\right)\right)K$. Thus, $e$ is topologically disjoint from $\left(\F\backslash B_{\F}\left(r\right)\right)K$.

\textbf{Step 3.} Assume that $K$ is bounded. For any $e\ne 0_{E}$ there is $r>0$ such that $e$ is topologically disjoint from  $\overline{B}_{\F}\left(r\right)K$. Indeed, consider a balanced open neighborhood $V$ of $0_{E}$, that satisfies $\overline{V}\subset E\backslash\left\{e\right\}$ and take $r>0$ such that $rK\subset V$.

\textbf{Step 4.} Now we show that each of (ii), (iii) and (iv) implies that $AK$ is closed. For $r>0$ define $A^{+}_{r}=A\backslash B_{\F}\left(r\right)$ and $A^{-}_{r}=A\cap\overline{B}_{\F}\left(r\right)$. Both of these sets are closed and their union is $A$.

We will show that any $e\not\in AK$ is topologically disjoint from $AK$. In order to do so we will divide $A$ into two or three pieces and show that $e$ is topologically disjoint from $BK$, for every such piece $B$. Consider the following cases:

\textbf{1.} If $0_{E}\not\in K$ and $0\not\in A$, then from Step 2, there is $r>0$, such that $e$ is topologically disjoint from $A^{+}_{r}K$. Since $0\not\in A$, it follows that $A^{-}_{r}$ is a closed bounded set, not containing $0$, and so $A^{-}_{r}K$ is closed by (i). Since $e\not\in AK\supset A^{-}_{r}K$ we conclude that $e$ is topologically disjoint from $A^{-}_{r}K$.

Note that this case covers the situation when $e=0_{E}$, and so we can assume further that $e\ne0_{E}$.

\textbf{2.} If $A$ and $K$ are bounded, by Step 3, there is $r>0$, such that $e$ is topologically disjoint from $A^{-}_{r}K$. Since $A$ is bounded, it follows that $A^{+}_{r}$ is a closed bounded set, not containing $0$, and so $A^{+}_{r}K$ is closed by (i). Since $e\not\in AK\supset A^{+}_{r}K$ we conclude that $e$ is topologically disjoint from $A^{+}_{r}K$.

\textbf{3.} If $K$ is bounded and $0_{E}\not\in K$, there are $r,R>0$, such that $e$ is topologically disjoint from $A^{-}_{r}K$ and $A^{+}_{R}K$. Then $B=\overline{A\backslash\left(A^{-}_{r}\cup A^{+}_{R}\right)}$ is a closed bounded set, not containing $0$, and so $BK$ is closed by (i). Since $e\not\in AK\supset BK$ we conclude that $e$ is topologically disjoint from $BK$.
\end{proof}

Till the end of the section we will assume that $K\subset E$ is bounded and contains no pairs of linearly dependent elements. Also denote   $K_{0}=\left\{0\right\}\times K$. Then \linebreak $\mu\left(K_{0}\right)=\left\{0_{E}\right\}$, while $0_{E}\not\in \Fp K$ and $\mu\left|_{\Fp\times K}\right.$ is an injection. In fact, \linebreak $\mu\left|_{\F\times K}\right.^{-1}\left(\mu\left(B\right)\right)=B$, if either $K_{0}\subset B$ or $K_{0}\cap B=\varnothing$; otherwise \linebreak $\mu\left|_{\F\times K}\right.^{-1}\left(\mu\left(B\right)\right)=B\cup K_{0}$. Instead of proving Theorem \ref{te} we will prove two slightly more general results.

\begin{theorem}
Let $K\subset E$ be closed, bounded and contain no pairs of linearly dependent elements. Then $\mu\left|_{\Fp\times K}\right.$ is a topological embedding. Moreover, the following are equivalent:
\item[(i)] $\mu\left|_{\F\times K}\right.$ is a closed map;
\item[(ii)] $\mu\left|_{\F\times K}\right.$ is a quotient map onto is image;
\item[(iii)] $K$ is countably compact.
\end{theorem}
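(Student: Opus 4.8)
The plan is to dispose of the degenerate cases first: if $K=\varnothing$ or $K=\{0_E\}$ every assertion is trivial, and if $0_E\in K$ together with some other point then those two points are linearly dependent; so from now on assume $0_E\notin K$. In particular $\mu|_{\Fp\times K}$ is injective, because $a_1e_1=a_2e_2$ with $a_i\in\Fp$, $e_i\in K$, forces $e_1,e_2$ to be linearly dependent, hence equal, hence (as $e_1\neq0_E$) $a_1=a_2$. Also $\F K=\mu(\F\times K)$ contains $0_E$, and $\Fp K=\F K\setminus\{0_E\}$ is open in $\F K$ (as $\{0_E\}$ is closed in $E$).

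For the embedding claim I would verify continuity of the two coordinate maps $p\colon ae\mapsto a$ and $q\colon ae\mapsto e$ on $\Fp K$ (subspace topology from $E$). Since $q(z)=\mu(p(z)^{-1},z)$ and inversion on $\Fp$ and $\mu$ are continuous, it suffices to treat $p$. Given a net $z_\alpha=a_\alpha e_\alpha\to z=ae$ in $\Fp K$, pass to a subnet along which $a_\beta$ converges, in the one-point compactification $\F\cup\{\8\}$, to some $a^*$, and rule out $a^*\neq a$: if $a^*\in\Fp\setminus\{a\}$, then $e_\beta=\mu(a_\beta^{-1},z_\beta)\to(a^*)^{-1}z$, which lies in $K$ (as $K$ is closed) and is linearly dependent with $e\in K$, hence equals $e$, forcing $a^*=a$; if $a^*=0$, then $a_\beta e_\beta\to0_E$ because $K$ is bounded, contradicting $z_\beta\to z\neq0_E$ and Hausdorffness; if $a^*=\8$, then $e_\beta=\mu(a_\beta^{-1},z_\beta)\to\mu(0,z)=0_E$, contradicting $0_E\notin K$ and $K$ closed. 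As every subnet sub-converges to $a$, the whole net $(a_\alpha)$ converges to $a$, so $p$ is continuous.

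For the equivalences, (i)$\Rightarrow$(ii) is immediate: a continuous closed surjection onto its image is a quotient map. For (ii)$\Rightarrow$(iii) argue by contraposition. If $K$ is not countably compact, choose a sequence $(e_n)$ in $K$ with no cluster point (one may assume the terms are distinct). Then $A=\{n^{-1}e_n:n\in\N\}\subset\Fp K$ is not closed in $\F K$, since $n^{-1}e_n\to0_E$ ($K$ bounded) while $0_E\in\F K\setminus A$. On the other hand, $\mu|_{\F\times K}^{-1}(A)=\{(n^{-1},e_n):n\in\N\}$ — here one uses $n^{-1}e_n\neq0_E$ and the absence of linearly dependent pairs to pin $a=n^{-1}$, $e=e_n$, exactly as in the discussion preceding Theorem \ref{te} — and this set is closed in $\F\times K$: a cluster point $(a,e)$ would force $a$ to be a cluster point of $(n^{-1})$, so $a=0$, and then $e$ would be a cluster point of $(e_n)$, which does not exist. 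Hence $\mu|_{\F\times K}$ is not a quotient map.

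The remaining direction (iii)$\Rightarrow$(i) is the crux. I would use the standard criterion that a continuous surjection $f$ is closed iff for each point $t$ of the target and each open $W\supseteq f^{-1}(t)$ there is an open $U\ni t$ with $f^{-1}(U)\subseteq W$, and verify this for $f=\mu|_{\F\times K}$ at each $z\in\F K$. For $z\neq0_E$ the fiber is the single point $(p(z),q(z))\in\Fp\times K$, and the tube comes for free from the embedding $\mu|_{\Fp\times K}$ together with the openness of $\Fp K$ in $\F K$. The hard point is $z=0_E$, whose fiber is $\{0\}\times K$: since $\F$ is first countable, the tube lemma still holds for the countably compact factor $K$ — were no $B_\F(\delta)\times K$ contained in $W$, a sequence $(a_n,e_n)\in(B_\F(n^{-1})\times K)\setminus W$ would have $(e_n)$ clustering at some $e\in K$, and a basic box $B_\F(\delta)\times V\subseteq W$ around $(0,e)$ would catch $(a_n,e_n)$ for large $n$ with $e_n\in V$. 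Given such a $\delta>0$ with $B_\F(\delta)\times K\subseteq W$, put $U=\F K\setminus(\F\setminus B_\F(\delta))K$: this is open in $\F K$ and contains $0_E$ since $(\F\setminus B_\F(\delta))K$ is closed in $E$ by Proposition \ref{te2}(iii) and misses $0_E$; and if $ae\in U$ with $e\in K$ then $|a|<\delta$ (otherwise $ae\in(\F\setminus B_\F(\delta))K$), so $\mu|_{\F\times K}^{-1}(U)\subseteq B_\F(\delta)\times K\subseteq W$. This closes the cycle. I expect the $z=0_E$ fiber to be the main obstacle: it is exactly where one must use both the metrizability of the scalar field (to run the tube lemma with only countable compactness) and the closedness statement of Proposition \ref{te2} to upgrade the tube to a genuine neighborhood in $\F K$.
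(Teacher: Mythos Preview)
Your argument is correct, but it is organized quite differently from the paper's. For the embedding, the paper shows that $\mu|_{\Fp\times K}$ is an \emph{open} map onto its image by writing the complement of $\mu(V\times U)$ in $\F K$ as $\F(K\setminus U)\cup(\F\setminus V)K$ and invoking Proposition~\ref{te2} to see that each piece is closed; you instead prove continuity of the inverse coordinate $p$ directly by a net argument exploiting compactness of $\F\cup\{\infty\}$, which bypasses Proposition~\ref{te2} at this stage entirely. For (ii)$\Rightarrow$(iii) the paper argues directly---from a countable cover $\{U_n\}$ it builds the saturated open set $W=\bigcup_n B_{\F}(1/n)\times U_n$ and uses boundedness of $K$ to extract a finite subcover from the openness of $\mu(W)$---whereas your contrapositive via a sequence without cluster points is equally valid and perhaps more concrete. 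For the return direction the paper proves (iii)$\Rightarrow$(ii) (saturated open sets have open image, again via a tube around $K_0$) and handles (ii)$\Rightarrow$(i) separately; you instead close the cycle with (iii)$\Rightarrow$(i) via the fiber characterization of closed maps, which is a clean shortcut. One small point to make explicit: your fiber argument shows $\mu|_{\F\times K}$ is closed as a surjection onto $\F K$; to obtain (i) in the sense of a closed map into $E$ (which is how the paper reads it in its proof of (ii)$\Rightarrow$(i)) you should add that $\F K$ itself is closed in $E$, and this is immediate from Proposition~\ref{te2}(iii) since $K$ is bounded and $0_E\notin K$.
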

\begin{proof}
Let us start with the first claim. It is enough to show that $\mu\left|_{\Fp\times K}\right.$ is an open map onto its image. Let $W$ be an open set in $\Fp\times K$. We will show that $\mu\left(W\right)$ is open in $\F K$.

First, assume that $W=V\times U$, where $V\subset \Fp$ and $U\subset K$ are open. Since $0\not\in V$, it follows that $K_{0}\cap W=\varnothing$ and so $$\mu\left(W\right)=\F K\backslash\mu\left[\F\times \left(K\backslash U\right)\cup\left(\F\backslash V\right)\times K\right]=\F K\backslash\left[\F \left(K\backslash U\right)\cup\left(\F\backslash V\right) K\right].$$ Since, due to the preceding proposition, both $\F \left(K\backslash U\right)$ and $\left(\F\backslash V\right) K$ are closed in $E$ we conclude that $W$ is open in $\F K$.

In the general case, by definition of the product topology, there are collections $\left\{U_{i}\right\}_{i\in I}$ and $\left\{V_{i}\right\}_{i\in I}$ of open sets in $K$ and $\Fp$ respectively, such that $W=\bigcup_{i\in I} V_{i}\times U_{i}$. Clearly, $0\not\in V_{i}$, for each $i\in I$, and so $V_{i} U_{i}$ is open in $\F K$. Then $$\mu\left(W\right)=\mu\left(\bigcup_{i\in I} V_{i}\times U_{i}\right)=\bigcup_{i\in I} \mu\left(V_{i}\times U_{i}\right)=\bigcup_{i\in I}V_{i} U_{i}$$ is a union of open sets, and so open itself.\medskip

Let us prove the equivalences. (i)$\Rightarrow$(ii) follows from the fact that any closed surjection is a quotient map (see \cite[Corollary 2.4.8]{engelking}).

(ii)$\Rightarrow$(i): Since from the preceding proposition $\F K$ is closed in $E$, it is enough to show that if $L$ is closed in $\F\times K$, then $\mu\left(L\right)$ is closed in $\F K$. Since $\mu\left|_{\F\times K}\right.$ is a quotient map onto $\F K$, the latter condition is equivalent to the closeness of $\mu\left|_{\F\times K}\right.^{-1}\left(\mu\left(L\right)\right)$. This set is equal either to $L$ or to $L\cup K_{0}$, which are both closed, and so $\mu\left|_{\F\times K}\right.^{-1}\left(\mu\left(L\right)\right)$ is closed.

(ii)$\Rightarrow$(iii): Let $\left\{U_{n}\right\}_{n\in\N}$ be a countable open cover of $K$ and let \linebreak   $W=\bigcup_{n\in \N} B_{\F}\left(\frac{1}{n}\right)\times U_{n}$. Since $K_{0}\subset W$, we have that $\mu\left|_{\F\times K}\right.^{-1}\left(\mu\left(W\right)\right)=W$, which is open, and since $\mu$ is a quotient map onto is image, $\mu\left(W\right)$ is open in $\F K$. Since $0_{E}\in\mu\left(W\right)$, there is a balanced open neighborhood $V$ of $0_{E}$ such that $V\cap \F K\subset\mu\left(W\right)$. Since $K$ is bounded there is $r>0$, such that $rK\subset V$, and since $V$ is balanced, $B_{\F}\left(r\right)K\subset V\cap \F K\subset \mu\left(W\right)$. Consequently, $$B_{\F}\left(r\right)\times K= \mu\left|_{\F\times K}\right.^{-1}\left(B_{\F}\left(r\right)K\right)\subset W=\bigcup_{n\in \N} B_{\F}\left(\frac{1}{n}\right)\times U_{n}.$$ Hence, $\left\{U_{n}\right\}_{n=1}^{N}$ covers $K$, where $N=\lfloor\frac{1}{r}\rfloor$. Thus, we found a finite subcover of an arbitrary countable open cover of $K$, and so (iii) follows.

(iii)$\Rightarrow$(ii): Recall that in the first part of the proof we have showed that $\mu\left(W\right)$ is open in $\F K$ for any open $W\subset\Fp\times K$. In a similar way one can show that if $V$ is open in $\F$, then $VK$ is open in $\F K$.

We need to show that $\mu\left(W\right)$ is open in $\F K$, for any open $W$ in $\F\times K$ such that $W=\mu\left|_{\F\times K}\right.^{-1}\left(\mu\left(W\right)\right)$. The latter equality means that either $K_{0}\cap W=\varnothing$ or $K_{0}\subset W$. In the former case, $\mu\left(W\right)$ is open since $W\subset\Fp\times K$. In the latter case, for each $e\in K$ there is $r_{e}>0$ and an open subset $U_{e}$ of $K$, such that \linebreak $B_{\F}\left(r_{e}\right)\times U_{e}\subset  W$. Define $U_{n}=\bigcup\limits_{r_{e}>\frac{1}{n}}U_{e}$. Clearly $\left\{U_{n}\right\}_{n\in\N}$ is a countable cover of $K$, and also an increasing sequence of sets. Since $K$ is countably compact, there is $N\in\N$, such that $U_{N}=K$, and so   $B_{\F}\left(\frac{1}{N}\right)\times K\subset W$. Define $W^{-}=B_{\F}\left(\frac{1}{N}\right)\times K$ and $W^{+}=W\backslash\left[\overline{B}_{\F}\left(\frac{1}{2N}\right)\times K\right]$. Then $W^{+}$ is an open set in $\Fp\times K$, and so $\mu\left(W^{+}\right)$ is open in $\F K$, while $\mu\left(W^{-}\right)$ is open in $\F K$ due to the comment above. Hence,   $\mu\left(W\right)=\mu\left(W^{+}\right)\cup\mu\left(W^{-}\right)$ is open in $\F K$.
\end{proof}

Analogously, one can prove the ``$K\cup\left\{0_{E}\right\}$ is closed'' counterpart of the preceding theorem.

\begin{theorem}
Let $K\subset E$ be bounded, contain no pairs of linearly dependent elements and such that $\overline{K}=K\cup\left\{0_{E}\right\}$. Let $B\subset\F$ be a closed disk centered at $0$ and let $B'=B\backslash \left\{0\right\}$. Then $\mu\left|_{B'\times K}\right.$ is a topological embedding. Moreover, the following are equivalent:
\item[(i)] $\mu\left|_{B\times \overline{K}}\right.$ is a closed map;
\item[(ii)] $\mu\left|_{B\times \overline{K}}\right.$ is a quotient map onto is image;
\item[(iii)] $\overline{K}$ is countably compact.
\end{theorem}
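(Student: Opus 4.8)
The plan is to run the argument in almost exact parallel with the proof of the preceding theorem, performing the substitutions: the field $\F$ is replaced by the closed disk $B=\overline{B}_{\F}\left(R\right)$, which, being closed and bounded in $\F$, is compact; the closed bounded set playing the role of ``$K$'' there becomes $\overline{K}=K\cup\left\{0_{E}\right\}$; and the one-component collapsing set $K_{0}=\left\{0\right\}\times K$ is replaced by $\Delta=\left(\left\{0\right\}\times\overline{K}\right)\cup\left(B\times\left\{0_{E}\right\}\right)$, which is exactly $\mu|_{B\times\overline{K}}^{-1}\left(0_{E}\right)$, is closed in $\F\times E$, and is collapsed by $\mu$ to $\left\{0_{E}\right\}$. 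Two preliminary remarks: since $K$ has no pair of linearly dependent elements we have $0_{E}\notin K$ (the case $K=\left\{0_{E}\right\}$ being trivial); and since $B$ and $\overline{K}$ are closed and bounded, Proposition \ref{te2}(iv) gives that $B\overline{K}$ is closed in $E$, whence (as $0_{E}\notin B'K$) both $B'K=B\overline{K}\setminus\left\{0_{E}\right\}$ is open in $B\overline{K}$ and $B'\times K=\left(B\times\overline{K}\right)\setminus\Delta$ is open in $B\times\overline{K}$.

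I would first prove the embedding claim, just as before. Injectivity of $\mu|_{B'\times K}$: if $ae=a'e'$ with $a,a'\in B'\subset\Fp$ and $e,e'\in K$, then $e$ and $e'$ are linearly dependent, hence equal, hence $a=a'$. For openness onto the image, it suffices by the product topology to show that $\mu\left(V\times U\right)$ is open in $B'K$ for $V$ open in $B'$ and $U$ open in $K$; using injectivity together with the identities $\left(B'\setminus V\right)K=\left(B\setminus V\right)\overline{K}\cap B'K$ and $B'\left(K\setminus U\right)=B\,\overline{K\setminus U}\cap B'K$ (closures in $\F$ and in $E$), one gets $\mu\left(V\times U\right)=B'K\setminus\left[\left(B\setminus V\right)\overline{K}\cup B\,\overline{K\setminus U}\right]$, and the two subtracted sets are closed in $E$ by Proposition \ref{te2}(iv); a union then handles a general open subset of $B'\times K$. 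It is precisely here that boundedness of $B$ is essential: because $0_{E}\in\overline{K}$, parts (i)--(iii) of Proposition \ref{te2} are unavailable, and only part (iv), which requires both factors bounded, applies.

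For the equivalences, (i)$\Rightarrow$(ii) is once more the fact that a closed surjection onto its image is a quotient map, and (ii)$\Rightarrow$(i) goes as before: since $B\overline{K}$ is closed in $E$, it suffices to check that $\mu\left(L\right)$ is closed in $B\overline{K}$ for $L$ closed in $B\times\overline{K}$, which by the quotient property reduces to the closedness of $\mu|_{B\times\overline{K}}^{-1}\left(\mu\left(L\right)\right)$, and this set is either $L$ or $L\cup\Delta$, both closed. For (ii)$\Rightarrow$(iii), given a countable open cover of $\overline{K}$, enlarge it to an increasing cover $\left\{U_{n}\right\}$ with $0_{E}\in U_{1}$, set $W=\left(B\times U_{1}\right)\cup\bigcup_{n}\left(B_{\F}\left(\rho_{n}\right)\times U_{n}\right)$ with $\rho_{n}\downarrow 0$ and $\rho_{n}<R$; then $\Delta\subset W$, so $W$ is saturated and $\mu\left(W\right)$ is open in $B\overline{K}$ and contains $0_{E}$, so it contains $V^{*}\cap B\overline{K}$ for a balanced $0_{E}$-neighborhood $V^{*}$, and boundedness of $\overline{K}$ yields a tube $\overline{B}_{\F}\left(r\right)\times\overline{K}\subset W$, after which comparing fibres forces $U_{N}=\overline{K}$ for some finite $N$. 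Finally, (iii)$\Rightarrow$(ii): one shows $\mu$ carries every saturated open $W$ to an open set, splitting on $W\cap\Delta$. If $W\cap\Delta=\varnothing$ then $W\subset B'\times K$ and the embedding claim, together with $B'K$ being open in $B\overline{K}$, gives that $\mu\left(W\right)$ is open. If $\Delta\subset W$, then countable compactness of $\overline{K}$ applied to the increasing cover $U_{n}=\bigcup_{\rho_{e}>1/n}U_{e}$ (built from tubes $B_{\F}\left(\rho_{e}\right)\times U_{e}\subset W$ about points of $\left\{0\right\}\times\overline{K}$) produces a slab $B_{\F}\left(\tfrac{1}{N}\right)\times\overline{K}\subset W$ with $\tfrac{1}{N}<R$, and compactness of $B$ applied about $B\times\left\{0_{E}\right\}$ produces a collar $B\times U^{*}\subset W$ with $U^{*}$ an open $0_{E}$-neighborhood in $\overline{K}$; then $\mu\left[\left(B_{\F}\left(\tfrac{1}{N}\right)\times\overline{K}\right)\cup\left(B\times U^{*}\right)\right]$ is open in $B\overline{K}$ because its complement there equals $\left(B\setminus B_{\F}\left(\tfrac{1}{N}\right)\right)\left(\overline{K}\setminus U^{*}\right)$, which is closed in $E$ by Proposition \ref{te2}(i), and writing $W$ as the union of this piece with $W\setminus\left[\left(\overline{B}_{\F}\left(\tfrac{1}{2N}\right)\times\overline{K}\right)\cup\left(B\times\left\{0_{E}\right\}\right)\right]$ --- an open subset of $B'\times K$ --- exhibits $\mu\left(W\right)$ as a union of two open sets.

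The step I expect to be the main obstacle is the direction (iii)$\Rightarrow$(ii) in the presence of the two-component $\Delta$: one must trap neighborhoods of $\left\{0\right\}\times\overline{K}$ and of $B\times\left\{0_{E}\right\}$ at the same time, and --- unlike the ``$VK$ is open'' step of the preceding proof, which leaned on $0_{E}\notin K$ --- neither the slab $B_{\F}\left(\tfrac{1}{N}\right)\overline{K}$ nor the collar $BU^{*}$ has an open image by itself; only their union does. This is what forces the proof to use compactness of $B$ (for the collar, via a finite subcover) alongside countable compactness of $\overline{K}$ (for the slab), and what makes boundedness of $B$ --- hence part (iv) of Proposition \ref{te2} --- indispensable throughout.
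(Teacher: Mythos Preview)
Your proposal is correct and is precisely the ``analogous'' argument the paper alludes to (the paper gives no detailed proof of this theorem, only the sentence ``Analogously, one can prove the `$K\cup\{0_{E}\}$ is closed' counterpart of the preceding theorem''). You have correctly identified the necessary modifications: replacing $\F$ by the compact disk $B$, replacing the single fibre $K_{0}$ by the two-component set $\Delta=\mu^{-1}(0_{E})$, and --- crucially --- switching from parts (ii)/(iii) of Proposition~\ref{te2} to part (iv) throughout, since $0_{E}\in\overline{K}$ blocks the other cases.

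Your diagnosis of the main new difficulty is exactly right and worth emphasizing: in (iii)$\Rightarrow$(ii), the image of neither the slab $B_{\F}(1/N)\times\overline{K}$ nor the collar $B\times U^{*}$ is open by itself in $B\overline{K}$, whereas in the preceding theorem the set $VK$ was open on its own. Only the union has open image, and you verify this cleanly by computing its complement as $(B\setminus B_{\F}(1/N))(\overline{K}\setminus U^{*})$ and invoking Proposition~\ref{te2}(i) (here $0\notin B\setminus B_{\F}(1/N)$ and $0_{E}\notin\overline{K}\setminus U^{*}$, so (i) applies). Producing the collar is also where compactness of $B$ --- hence boundedness --- enters in an essential second way, beyond its role in Proposition~\ref{te2}(iv). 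All identities you use (e.g.\ $(B'\setminus V)K=(B\setminus V)\overline{K}\cap B'K$ and $B'(K\setminus U)=B\,\overline{K\setminus U}\cap B'K$) check out via the injectivity of $\mu$ on $B'\times K$ together with $\overline{K\setminus U}\cap K=K\setminus U$.
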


\section{Acknowledgment}
This paper is a part of the author's thesis and he wants to thank his supervisor Nina Zorboska for the general guidance and some valuable insights regarding Section \ref{h}. Also the author wants to thank Daniel Fischer, who contributed to the proof of Proposition \ref{te2} and the service \href{math.stackexchange.com/}{Math.Stackexchange} which made it possible.

\begin{bibsection}
\begin{biblist}

\bib{am}{article}{
    AUTHOR = {Agler, Jim},
    AUTHOR = {McCarthy, John E.},
     TITLE = {Pick interpolation and {H}ilbert function spaces},
    SERIES = {Graduate Studies in Mathematics},
    VOLUME = {44},
 PUBLISHER = {American Mathematical Society, Providence, RI},
      YEAR = {2002},
     PAGES = {xx+308},
}

\bib{bartle}{article}{
   author={Bartle, Robert G.},
   title={On compactness in functional analysis},
   journal={Trans. Amer. Math. Soc.},
   volume={79},
   date={1955},
   pages={35--57},
}

\bib{bourbaki}{book}{
    AUTHOR = {Bourbaki, Nicolas},
     TITLE = {Elements of mathematics. {G}eneral topology. {P}art 1},
 PUBLISHER = {Hermann, Paris; Addison-Wesley Publishing Co., Reading,
              Mass.-London-Don Mills, Ont.},
      YEAR = {1966},
     PAGES = {vii+437},
}

\bib{bourdon}{article}{
    AUTHOR = {Bourdon, Paul S.},
     TITLE = {Invertible weighted composition operators},
   JOURNAL = {Proc. Amer. Math. Soc.},
  FJOURNAL = {Proceedings of the American Mathematical Society},
    VOLUME = {142},
      YEAR = {2014},
    NUMBER = {1},
     PAGES = {289--299},
}

\bib{chirka}{book}{
    AUTHOR = {Chirka, E. M.},
     TITLE = {Complex analytic sets},
    SERIES = {Mathematics and its Applications (Soviet Series)},
    VOLUME = {46},
      NOTE = {Translated from the Russian by R. A. M. Hoksbergen},
 PUBLISHER = {Kluwer Academic Publishers Group, Dordrecht},
      YEAR = {1989},
     PAGES = {xx+372},
}

\bib{cm}{book}{
    AUTHOR = {Cowen, Carl C.},
    AUTHOR = {MacCluer, Barbara D.},
     TITLE = {Composition operators on spaces of analytic functions},
    SERIES = {Studies in Advanced Mathematics},
 PUBLISHER = {CRC Press, Boca Raton, FL},
      YEAR = {1995},
     PAGES = {xii+388},
}

\bib{engelking}{book}{
    AUTHOR = {Engelking, Ryszard},
     TITLE = {General topology},
    SERIES = {Sigma Series in Pure Mathematics},
    VOLUME = {6},
   EDITION = {Second},
      NOTE = {Translated from the Polish by the author},
 PUBLISHER = {Heldermann Verlag, Berlin},
      YEAR = {1989},
     PAGES = {viii+529},
}

\bib{fm}{article}{
    AUTHOR = {Ferreira, J. C.},
    AUTHOR = {Menegatto, V. A.},
     TITLE = {Positive definiteness, reproducing kernel {H}ilbert spaces and
              beyond},
   JOURNAL = {Ann. Funct. Anal.},
  FJOURNAL = {Annals of Functional Analysis},
    VOLUME = {4},
      YEAR = {2013},
    NUMBER = {1},
     PAGES = {64--88},
}

\bib{fj}{book}{
    AUTHOR = {Fleming, Richard J.},
    AUTHOR = {Jamison, James E.},
     TITLE = {Isometries on {B}anach spaces: function spaces},
    SERIES = {Chapman \& Hall/CRC Monographs and Surveys in Pure and Applied
              Mathematics},
    VOLUME = {129},
 PUBLISHER = {Chapman \& Hall/CRC, Boca Raton, FL},
      YEAR = {2003},
     PAGES = {x+197},
}

\bib{gj}{article}{
    AUTHOR = {Garrido, M. Isabel},
    AUTHOR = {Jaramillo, Jes\'us A.},
     TITLE = {Variations on the {B}anach-{S}tone theorem},
      NOTE = {IV Course on Banach Spaces and Operators (Spanish) (Laredo,
              2001)},
   JOURNAL = {Extracta Math.},
  FJOURNAL = {Extracta Mathematicae},
    VOLUME = {17},
      YEAR = {2002},
    NUMBER = {3},
     PAGES = {351--383},
}

\bib{hatcher}{book}{
   author={Hatcher, Allen},
   title={Algebraic topology},
   publisher={Cambridge University Press, Cambridge},
   date={2002},
   pages={xii+544},
}

\bib{jury}{article}{
    AUTHOR = {Jury, Michael T.},
    AUTHOR = {Martin, Robert T.W.},
     TITLE = {Extremal multipliers of the Drury-Arveson space},
   JOURNAL = {\href{https://arxiv.org/abs/1608.04327}{arXiv:1608.04327}},
      YEAR = {2016},
}

\bib{le}{article}{
    AUTHOR = {Le, Trieu},
     TITLE = {Normal and isometric weighted composition operators on the
              {F}ock space},
   JOURNAL = {Bull. Lond. Math. Soc.},
  FJOURNAL = {Bulletin of the London Mathematical Society},
    VOLUME = {46},
      YEAR = {2014},
    NUMBER = {4},
     PAGES = {847--856},
}

\bib{mr}{article}{
    AUTHOR = {Mashreghi, Javad},
    AUTHOR = {Ransford, Thomas},
     TITLE = {A {G}leason-{K}ahane-\.Zelazko theorem for modules and
              applications to holomorphic function spaces},
   JOURNAL = {Bull. Lond. Math. Soc.},
  FJOURNAL = {Bulletin of the London Mathematical Society},
    VOLUME = {47},
      YEAR = {2015},
    NUMBER = {6},
     PAGES = {1014--1020},
}
\bib{bn}{book}{
    AUTHOR = {Narici, Lawrence},
    AUTHOR = {Beckenstein, Edward},
     TITLE = {Topological vector spaces},
    SERIES = {Pure and Applied Mathematics (Boca Raton)},
    VOLUME = {296},
   EDITION = {Second},
 PUBLISHER = {CRC Press, Boca Raton, FL},
      YEAR = {2011},
     PAGES = {xviii+610},
}
\bib{ms}{book}{
    AUTHOR = {Singh, R. K.},
    AUTHOR = {Manhas, J. S.},
     TITLE = {Composition operators on function spaces},
    SERIES = {North-Holland Mathematics Studies},
    VOLUME = {179},
 PUBLISHER = {North-Holland Publishing Co., Amsterdam},
      YEAR = {1993},
     PAGES = {x+315},
}

\bib{ss}{article}{
    AUTHOR = {Singh, R. K.},
    AUTHOR = {Summers, W. H.},
     TITLE = {Composition operators on weighted spaces of continuous
              functions},
   JOURNAL = {J. Austral. Math. Soc. Ser. A},
  FJOURNAL = {Australian Mathematical Society. Journal. Series A. Pure
              Mathematics and Statistics},
    VOLUME = {45},
      YEAR = {1988},
    NUMBER = {3},
     PAGES = {303--319},
}

\bib{wada}{article}{
    AUTHOR = {Wada, Junzo},
     TITLE = {Weakly compact linear operators on function spaces},
   JOURNAL = {Osaka Math. J.},
    VOLUME = {13},
      YEAR = {1961},
     PAGES = {169--183},
}

\bib{nz}{article}{
    AUTHOR = {Zorboska, N.},
     TITLE = {Unitary and Normal Weighted Composition Operators on Reproducing Kernel Hilbert Spaces of Holomorphic Functions},
   JOURNAL = {preprint},
      YEAR = {2017},
}

\end{biblist}
\end{bibsection}

\end{document}